\def\Section{\setcounter{equation}{0}\section}
\newtheorem{theorem}{Theorem}[section]
\newtheorem{lemma}[theorem]{Lemma}
\newtheorem{corollary}[theorem]{Corollary}
\newtheorem{definition}[theorem]{Definition}
\newtheorem{proposition}[theorem]{Proposition}
\newtheorem{remark}[theorem]{Remark}
\def\thetheorem{\thesection.\arabic{theorem}}
\def\thesection{\arabic{section}}
\def\theequation {\thesection.\arabic{equation}}
\def\beq{\begin{equation}\displaystyle}
\def\eeq{\end{equation}}
\def\bel{\begin{equation} \displaystyle \begin{array}{l} }
\def\eel{\end{array} \end{equation} }
\def\bell{\begin{equation} \displaystyle \begin{array}{ll}  }
\def\eell{\end{array} \end{equation} }
\def\bea{\begin{eqnarray}}
\def\eea{\end{eqnarray} }
\def\bean{\begin{eqnarray*}}
\def\eean{\end{eqnarray*} }
\newenvironment{proof}{\noindent{\bf Proof.~}}
{{\mbox{}\hfill {\small \fbox{}}\\}}
\renewcommand\appendix{\bigskip {\noindent \Large \bf Appendix}
  \setcounter{section}{0}%
  \setcounter{subsection}{0}%
\setcounter{equation}{0}%
\setcounter{theorem}{0}%
\def\thetheorem{A.\arabic{theorem}}
\def\theequation {A.\arabic{equation}}}
\newcommand{\dv}{\mathop{\rm div}\nolimits}
\def\NN{\mathbb{N}}
\def\RR{\mathbb{R}}
\def\ZZ{\mathbb{Z}}
\def\ds{\displaystyle}
\def\bs{\bigskip}
\def\eps{\varepsilon}
\def\pa{\partial}
\def\calM{{\cal M}}
\def\calW{{\cal W}}
\def\calP{{\cal P}}
\def\calD{{\cal D}}
\def\calI{{\cal I}}
\def\smes{{\cal S}_{\cal M}}
\def\achapo{\widehat{a}}
\def\co{\overline{\rm co}}
\def\Xchapo{\widehat{X}}
\def\Zchapo{\widehat{Z}}
\def\nabWchapo{\widehat{\nabla W}}
\def\tilderho{\widetilde{\rho}}
\def\tildeX{\widetilde{X}}
\def\tildea{\widetilde{a}}
\begin{document}

\title{The Filippov characteristic flow for the aggregation equation with mildly singular potentials}

\author{J. A. Carrillo\thanks{Department of Mathematics, Imperial College London, London SW7 2AZ.
Email: \texttt{carrillo@imperial.ac.uk}}, 
F. James \thanks{Math\'ematiques -- Analyse, Probabilit\'es, Mod\'elisation -- Orl\'eans (MAPMO),
Universit\'e d'Orl\'eans \& CNRS UMR 7349}
\thanks{F\'ed\'eration Denis Poisson, Universit\'e d'Orl\'eans \& CNRS FR 2964,
45067 Orl\'eans Cedex 2, France, Email: \texttt{francois.james@univ-orleans.fr}}, 
F. Lagouti\`ere\thanks{Laboratoire de Math\'ematiques, UMR 8628, CNRS -- Universit\'e Paris-Sud, F-91405 Orsay, Email : \texttt{frederic.lagoutiere@math.u-psud.fr}} \& 
N. Vauchelet\thanks{Sorbonne Universit\'es, UPMC Univ Paris 06, UMR 7598, Laboratoire Jacques-Louis Lions, F-75005, Paris, France, Email: \texttt{nicolas.vauchelet@upmc.fr}}
\thanks{CNRS, UMR 7598, Laboratoire Jacques-Louis Lions, F-75005, Paris, France}
\thanks{INRIA-Paris-Rocquencourt, EPC MAMBA, Domaine de Voluceau, BP105, 78153 Le Chesnay Cedex, France} }

\maketitle

\begin{abstract}
Existence and uniqueness of global in time measure solution for the
multidimensional aggregation equation is analyzed. Such a system can be written as a 
continuity equation with a velocity field computed through a self-consistent interaction potential.
In Carrillo et al. (Duke Math J (2011)) \cite{Carrillo}, a well-posedness theory based on the geometric approach of gradient flows in measure metric spaces has been developed for mildly singular potentials at the origin under the basic assumption of being $\lambda$-convex.
We propose here an alternative method using classical tools from PDEs. We 
show the existence of a characteristic flow based on Filippov's theory of discontinuous 
dynamical systems such that the weak measure solution is the pushforward measure with this flow.
Uniqueness is obtained thanks to a contraction argument in transport distances using the $\lambda$-convexity of the potential. Moreover, we show the equivalence of this solution with the
gradient flow solution. Finally, we show the convergence of a numerical scheme for general measure solutions in this framework allowing for the simulation of solutions for initial smooth densities after their first blow-up time in $L^p$-norms.
\end{abstract}

\bs

{\bf Keywords: } aggregation equation, nonlocal conservation equations, measure-valued solutions, gradient flow, Filippov's flow, finite volume schemes.

{\bf 2010 AMS subject classifications: } 35B40, 35D30, 35L60, 35Q92, 49K20.

\bs

\Section{Introduction}

This paper is devoted to the so-called aggregation equation in $d$ space dimension
    \beq\label{EqInter}
\pa_t\rho = \dv\big((\nabla_x W*\rho) \rho\big) , \qquad t>0,\quad x\in\RR^d,
    \eeq
complemented with the initial condition $\rho(0,x)=\rho^{ini}$. Here, $W$ plays the role of an interaction potential whose gradient $\nabla_x W(x-y)$ measures the relative force exerted by an infinitesimal mass localized at a point $y$ onto an infinitesimal mass located at a point x.

This system appears in many applications in physics and population dynamics.
In the framework of granular media, equation \eqref{EqInter} is used to
describe the large time dynamics of inhomogeneous kinetic models (see \cite{benedetto,CCV,Toscani}).
Model of crowd motion with a nonlinear dependancy of the term $\nabla_xW*\rho$
are also encountered in \cite{pieton,pieton2}.
In population dynamics, \eqref{EqInter} provides a biologically meaningful description
of aggregative phenomena. The description of the collective migration of cells by swarming
leads to such non-local interaction PDEs (see e.g. \cite{morale,okubo,topaz}).
Another example is the modelling of bacterial chemotaxis. In this framework, the quantity $S=W*\rho$
is the chemoattractant concentration which is a substance emitted by bacteria allowing them to
interact with each others. The dynamics can be macroscopically modelled by the Patlak-Keller-Segel system
\cite{keller,patlack}. In the kinetic framework, the Othmer-Dunbar-Alt model is usually used,
its hydrodynamic limit leads to the aggregation equation \eqref{EqInter} \cite{dolschmeis,filblaurpert,NoDEA}.
In many of these examples, the potential $W$ is usually mildly singular, i.e. $W$ has a weak singularity
at the origin. Due to this weak regularity, finite time blow-up of regular solutions
has been observed for such systems and has gained the attention of several authors
(see e.g. \cite{Li,BV,Bertozzi1,Bertozzi2,Carrillo}). Finite time concentration is sometimes considered as a very 
simple mathematical way to mimick aggregation of individuals, as opposed to diffusion. Finally, attraction-repulsion 
potentials have been recently proposed as very simple models of pattern formation due to the rich structure of the 
set of stationary solutions, see \cite{PSTV,BU,BUKB,BCLR,BBSKU} for instance.

Since finite time blow-up of regular solutions occurs, a natural framework to
study the existence of global in time solutions is to work in the space of probability measures.
However, several difficulties appear due to the weak regularity of the potential.
In fact, the definition of the product of $\nabla W*\rho$ with $\rho$
is a priori not well defined. This fact has already been noticed in one dimension
in \cite{NoDEA,GF_dual}. Using defect measures in a two-dimensional framework,
existence of weak measure solutions for parabolic-elliptic coupled system
has been obtained in \cite{PoupaudDef,DolbSchm}. However, uniqueness is lacking.
Measure valued solutions for the 2D Keller-Segel system have been considered
in \cite{LuckSugiVela} as limit of solutions of a regularized problem.

For the aggregation equation \eqref{EqInter}, a well-posedness theory for measure
valued solutions has been considered using the geometrical approach
of gradient flows in \cite{Carrillo}. This technique has been extended to
the case with two species in \cite{DiFrancesco}. The assumptions on the potential in order
to get this well-posedness theory of measure valued solutions use certain convexity of the potential
that allows for mild singularity of the potential at the origin.

In this paper, we assume that the interaction potential $W\,:\,\RR^d\to\RR$ satisfies the following properties:
\begin{itemize}
\item[{\bf (A0)}] $W$ is Lipschitz continuous, $W(x)=W(-x)$ and $W(0)=0$.
\item[{\bf (A1)}] $W$ is $\lambda$-convex for some $\lambda\leq 0$, i.e.
  $W(x)-\frac{\lambda}{2}|x|^2$ is convex.
\item[{\bf (A2)}] $W\in C^1(\RR^d\setminus\{0\})$.
\end{itemize}
\textcolor{blue}{This set of potentials includes the class of so-called {\it pointy} potentials,
which have a pointy tip at the origin.}
A typical example is a fully attractive Morse type potential, $W(x)=1-e^{-|x|}$, which is $-1$-convex.

Let us emphasize that we only consider Lipschitz potentials which allows to bound the velocity field,
whereas in \cite{Carrillo}, linearly growing at infinity potentials are allowed.
In other words, we assume that there exists a nonnegative constant $w_\infty$ such that for all $x\neq 0$,
\beq\label{borngradW}
|\nabla W(x)| \leq w_\infty.
\eeq
The main reason for this restriction is to be able to work with suitable characteristics for this velocity field as explained below.

Denoting $a=-\nabla W*\rho$ the macroscopic velocity, equation \eqref{EqInter}
can be considered as a conservative transport equation with velocity field $a$. Then
a traditional definition for solutions is the one defined thanks to
the characteristics corresponding to this macroscopic velocity.
However, the velocity $a$ is not Lipschitz and therefore we cannot
defined classical solutions to the characteristics equation.
To overcome this difficulty, Filippov \cite{Filippov} has proposed
a notion of solution which extend the classical one. Using
this so-called Filippov flow, Poupaud \& Rascle \cite{PoupaudRascle} have proposed a notion
of solution to the conservative linear transport equation defined by
$X_\# \rho^{ini}$ where $X$ is the Filippov flow corresponding
to the macroscopic velocity. However a stability result of the flow
was still lacking until recently \cite{Bianchini}, and thus 
there are no results with this technique for nonlinear equations of the form \eqref{EqInter}.
We notice that in one dimension and for linear equations, these solutions are equivalent to
the duality solutions defined in \cite{bj1,BJpg}, which have been successfully
used in \cite{NoDEA, GF_dual} to tackle \eqref{EqInter} in the one dimensional case.

On the other hand, although the geometric approach of gradient flows furnishes a
general framework for well-posedness, this approach does not allow to define a characteristic 
flow corresponding to the macroscopic velocity $a=-\nabla W*\rho$. In this work, we focus on
improving the understanding of these solutions by showing that under assumptions  {\bf (A0)}-{\bf (A2)} 
on the potential, the solutions can be understood as their initial data pushed forward by suitable characteristic flows. 

In order to achieve this goal, we first generalize the theory developed in \cite{PoupaudRascle} 
to the nonlinear aggregation equation \eqref{EqInter}. The first difficulty is, as it was in 
\cite{Carrillo}, to identify the right definition of the nonlinear term and the nonlinear product. 
This was solved in \cite{Carrillo} by identifying the element of minimal norm by subdifferential calculus. 
We revisit this issue by clarifying that this is the right definition of the nonlinear term if we approximate 
a pointy potential by smooth symmetric potentials. Once the identification of the right velocity field has been done, 
we use the crucial stability results of Filippov's flows in \cite{Bianchini} to pass to the limit in the nonlinear 
terms. This leads to the construction of global measure solutions of the form $X_\# \rho^{ini}$, where $X$ is the 
Filippov flow associated to the velocity vector field $a$. This is the point where we need globally bounded velocity 
vector fields since Filippov's theory \cite{Filippov} was only developed under these assumptions. 
In this way, we extend to the muti-dimensional case the results in \cite{NoDEA} 
(for a particular choice of the potential $W$) and in \cite{GF_dual}.

Moreover, we are able to adapt arguments for uniqueness already used for the aggregation equation and for nonlinear 
continuity equations as in \cite{Loeper,CR,Bertozzi2} to show the contraction property of the Wasserstein distance 
for our constructed solutions. This leads to a uniqueness result for our constructed solutions and to show the 
equivalence between the notion of gradient flow solutions and these Filippov's flow characteristics solutions.  
Let us further comment that in the one dimensional case it has been noticed that there is a link between solutions 
to \eqref{EqInter} and entropy solutions to scalar conservation law for an antiderivative of $\rho$ 
(see \cite{BV,bonaschi, NoDEA, GF_dual}). This link has allowed to consider 
extensions of the model \eqref{EqInter} with a nonlinear dependency of the term $\nabla W*\rho$.

Finally, let us mention that apart from particle methods to the aggregation equations, very few numerical schemes 
have been proposed to simulate solutions of the aggregation equation after blow-up. The so-called sticky particle 
method was shown to be convergent in \cite{Carrillo} and used to obtain qualitative properties of the solutions such 
as the finite time total collapse. However, this method is not that practical to deal with finite time blow-up and 
the behavior of solutions after blow-up in dimensions larger than one. In one dimension, such numerical simulations thanks 
to a particle scheme have been obtained by part of the authors in \cite{NoDEA}. Moreover, in the one dimensional case and with a 
nonlinear dependency of the term $\nabla W*\rho$, they propose in \cite{sinum} a finite volume scheme allowing to simulate the 
behaviour after blow up and prove its convergence.
Finally, extremely accurate numerical schemes have been developed to study the blow-up profile for smooth solutions, 
see \cite{Huang1,Huang2}. In fact, part of the authors recently proposed an energy decreasing finite volume method 
\cite{CCH} for a large class of PDEs including in particular \eqref{EqInter} but no convergence result was given. 
Here, we give a convergence result for a finite volume scheme and for general measures as initial data. 
This allows for numerical simulations of solutions in dimension greater than one allowing to observe the behaviour after blow-up occurs.

The outline of the paper is the following. Next section is devoted to the definition of our notion of weak
measure solutions for the aggregation equation. After introducing some notations, we first recall the basic 
results as obtained by Poupaud \& Rascle \cite{PoupaudRascle} on measure solutions
for conservative linear transport equations. Then we define the notion of solutions
defined by a flow and state the main result of this paper  in Theorem \ref{Exist}.
Finally, we recall the existence result of gradient flow solutions in \cite{Carrillo}
and state their equivalence with solutions defined by a flow.
Section \ref{sec:proof} is devoted to the proof of the existence and
uniqueness result. The main ingredient of the proof of existence
is a one-sided Lipschitz property of the macroscopic velocity and
an atomization strategy by approximating with finite Dirac Deltas. 
A contraction argument in Wasserstein distance for these solutions allows to recover the uniqueness.
In Section \ref{sec:num}, we investigate the numerical approximation
of such solutions. A finite volume scheme is proposed and its convergence
is established for general measure valued solutions. An illustration thanks 
to numerical simulations is also provided showing the ability of the scheme to capture the finite 
time total collapse and the qualitative interaction between different aggregates after the first 
blow-up in $L^p$-norms. Finally, an Appendix is devoted to some technical Lemmas useful throughout
the paper.

\Section{Weak measure solutions for the aggregation equation}
\label{sec:defsol}

All along the paper, we will make use of the following notations.
We denote $\calM_{loc}(\RR^d)$ the space of locally finite measures on $\RR^d$. For $\rho\in {\cal M}_{loc}(\RR^d)$, we denote 
by $|\rho|(\RR^d)$ its total variation. We denote $\calM_b(\RR^d)$ the space of measures in $\calM_{loc}(\RR^d)$ with finite total variation.
From now on, the space of measures $\calM_b(\RR^d)$ is
always endowed with the weak topology $\sigma({\cal M}_b,C_0)$.
For $T>0$, we denote $\smes :=C([0,T];{\cal M}_b(\RR^d)-\sigma({\cal M}_b,C_0))$.
Finally, we define the space of probability measures with finite
second order moment:
    $$
\calP_2(\RR^d) = \left\{\mu \mbox{ nonnegative Borel measure}, \mu(\RR^d)=1, \int |x|^2 \mu(dx) <\infty\right\}.
    $$
This space is endowed with the Wasserstein distance $d_W$ defined by (see e.g. \cite{Villani1, Villani2})
    \beq\label{defWp}
d_{W}(\mu,\nu)= \inf_{\gamma\in \Gamma(\mu,\nu)} \left\{\int |y-x|^2\,\gamma(dx,dy)\right\}^{1/2}
    \eeq
where $\Gamma(\mu,\nu)$ is the set of measures on $\RR^d\times\RR^d$ with marginals $\mu$ and $\nu$, i.e.
\begin{align*}
\Gamma(\mu,\nu) = \left\{ \gamma\in \calP_2(\RR^d\times\RR^d); \ \forall\, \xi\in C_0(\RR^d), \right. &
\int \xi(y_1)\gamma(dy_1,dy_2) = \int \xi(y_1) \mu(dy_1), \\
& \left.\int \xi(y_2)\gamma(dy_1,dy_2) = \int \xi(y_2) \nu(dy_2) \right\}.
\end{align*}
From a minimization argument, we know that in the definition of $d_{W}$
the infimum is actually a minimum.
A map that realizes the minimum in the definition \eqref{defWp}
of $d_{W}$ is called an {\it optimal plan}, the set of which is denoted by $\Gamma_0(\mu,\nu)$.
Then for all $\gamma_0\in \Gamma_0(\mu,\nu)$, we have
$$
d_{W}^2(\mu,\nu)= \int |y-x|^2\,\gamma_0(dx,dy).
$$


\subsection{Weak measure solutions for conservative transport equation}

We recall in this Section some useful results
on weak measure solutions to the conservative transport equation
\beq\label{CTE}
\pa_t u + \dv(b u)=0; \qquad u(t=0) = u^0.
\eeq
We assume here that the vector field $b$ is given.

We start by the following definition of characteristics \cite{Filippov}~:
\begin{definition}\label{DefFlow}
Let us assume that $b=b(t,x)\in \RR^d$ is a vector field defined on $[0,T]\times \RR^d$ with $T>0$.
A Filippov characteristic $X(t;s,x)$ stems from $x\in \RR^d$ at time $s$
is a continuous function $X(\cdot;s,x) \in C([0,T],\RR^d)$ such that
$\frac{\pa}{\pa t} X(t;s,x)$ exists a.e. $t\in [0,T]$ satisfying
	$$
\frac{\pa}{\pa t} X(t;s,x) \in \big\{ {\mbox Convess} (b)(t,\cdot)\big\}
(X(t;s,x)) \quad \mbox{a.e. } t\in [0,T]; \qquad
X(s;s,x) = x.
	$$
From now on, we will use the notation $X(t,x)=X(t;0,x)$.
\end{definition}

In this definition $Convess(E)$ denotes the essential convex hull of
a set $E$. We remind the reader the definition for the sake of completeness, 
see \cite{Filippov,AubinCellina} for more details. We denote by $Conv(E)$ the classical convex
hull of $E$, i.e., the smallest closed convex set containing $E$. Given the vector field 
$b(t,\cdot):\RR^d\longrightarrow\RR^d$, the essential convex hull at point $x$ is defined as
	$$
\{ {\mbox Convess} (b)(t,\cdot)\}(x)=\bigcap_{r>0} \bigcap_{N\in \mathcal{N}_0} Conv\left[ b\left( t, B(x,r)\setminus N\right)\right]\,,
	$$
where $\mathcal{N}_0$ is the set of zero Lebesgue measure sets.
Then, we have the following existence and uniqueness result of Filippov
characteristics under the mere assumption that the vector field $b$
is one-sided Lipschitz.
	\begin{theorem}[\cite{Filippov}]
Let $T>0$. Let us assume that the vector field
$b\in L^1_{loc}(\RR;L^\infty(\RR^d))$ satisfies the OSL condition, that is
for all $x$ and $y$ in $\RR^d$, for all $t\in [0,T]$,
\beq\label{OSLC}
(b(t,x)-b(t,y))\cdot(x-y) \leq \alpha(t) \|x-y\|^2,
\quad\mbox{ for }\alpha \in L^1(0,T).
\eeq
Then there exists an unique Filippov characteristic $X$
associated to this vector field.
	\end{theorem}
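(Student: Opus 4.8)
The plan is to follow Filippov's original argument \cite{Filippov}, splitting into existence and uniqueness. The key structural fact is that the OSL condition \eqref{OSLC} is exactly what makes the flow contractive (up to the exponential factor $e^{\int\alpha}$) and hence unique, and that it is also compatible with a compactness-based existence proof.

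\medskip
\noindent\textbf{Existence.} First I would regularize the vector field: mollify $b$ in space by convolution with a standard mollifier $\theta_\eps$, setting $b_\eps(t,\cdot)=b(t,\cdot)*\theta_\eps$. Then $b_\eps\in L^1_{loc}(\RR;W^{1,\infty}(\RR^d))$ (Lipschitz in $x$ for a.e. $t$, with an $L^1$-in-time Lipschitz bound) and is uniformly bounded by $\|b\|_{L^1(L^\infty)}$, so the classical Cauchy-Lipschitz theory gives a unique flow $X_\eps(t;s,x)$ solving $\partial_t X_\eps = b_\eps(t,X_\eps)$, absolutely continuous in $t$. The crucial point is that convolution preserves the OSL estimate: for each fixed $z$ in the support of $\theta_\eps$, $(b(t,x-z)-b(t,y-z))\cdot(x-y)\le\alpha(t)|x-y|^2$, and integrating against $\theta_\eps(z)\,dz\ge 0$ gives the same bound for $b_\eps$ with the same $\alpha$, uniformly in $\eps$. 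Hence, by Gronwall applied to $\frac{d}{dt}|X_\eps(t;s,x)-X_\eps(t;s,y)|^2$, the maps $X_\eps$ are uniformly (in $\eps$) Lipschitz in the initial point $x$ with constant $e^{\int_s^t\alpha}$, and uniformly bounded and equi-Lipschitz in $t$ on compact sets. By Arzel\`a--Ascoli (diagonal extraction over compact sets in $x$) we extract a subsequence $X_\eps\to X$ locally uniformly, and $X$ inherits the time-continuity and the initial condition $X(s;s,x)=x$. The remaining step — and the main obstacle — is to show that the limit $X$ satisfies the Filippov differential inclusion $\partial_t X(t;s,x)\in\{\mathrm{Convess}(b)(t,\cdot)\}(X(t;s,x))$ a.e. This is the standard but delicate part of Filippov-type theorems: one writes $X_\eps(t)-X_\eps(\tau)=\int_\tau^t b_\eps(\sigma,X_\eps(\sigma))\,d\sigma$, passes to the limit using uniform convergence of $X_\eps$ and the structure of the essential convex hull, and invokes a convexity/closure argument (Mazur's lemma or the measurable selection machinery in \cite{AubinCellina}) to identify the velocity of the limit curve as lying in the closed convex set $\mathrm{Convess}(b)$; the reason mollification lands inside $\mathrm{Convess}(b)$ is that $b_\eps(t,X_\eps(t))$ is an average of values of $b(t,\cdot)$ near $X(t)$, which is precisely the kind of point the essential convex hull is designed to capture in the limit $\eps\to 0$.

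\medskip
\noindent\textbf{Uniqueness.} Suppose $X$ and $Y$ are two Filippov characteristics stemming from the same $x$ at time $s$. For a.e. $t$, $\partial_t X(t)\in\{\mathrm{Convess}(b)(t,\cdot)\}(X(t))$ and likewise for $Y$. The key lemma is that the OSL condition \eqref{OSLC} extends from $b$ to its essential convex hull: if $\xi\in\{\mathrm{Convess}(b)(t,\cdot)\}(X(t))$ and $\eta\in\{\mathrm{Convess}(b)(t,\cdot)\}(Y(t))$, then $(\xi-\eta)\cdot(X(t)-Y(t))\le\alpha(t)|X(t)-Y(t)|^2$ — this holds because the convex hull of a set all of whose members satisfy a linear inequality (against the fixed vector $X(t)-Y(t)$) still satisfies it, and the intersection over balls and null sets only shrinks the set; a short continuity/limiting argument handles the $r\to 0$ intersection. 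Granting this, set $\varphi(t)=|X(t)-Y(t)|^2$; then $\varphi$ is absolutely continuous with $\varphi'(t)=2(\partial_t X-\partial_t Y)\cdot(X-Y)\le 2\alpha(t)\varphi(t)$ a.e., and $\varphi(s)=0$, so Gronwall gives $\varphi\equiv 0$ on $[s,T]$, i.e. $X=Y$. The same inequality $\varphi'\le 2\alpha\varphi$ applied to two characteristics with distinct starting points also yields the quantitative stability $|X(t;s,x)-X(t;s,y)|\le e^{\int_s^t\alpha}|x-y|$, which is worth recording since it will be used later. I expect the only genuinely technical point in the whole proof to be the passage to the differential inclusion in the existence part; the uniqueness part is a clean Gronwall argument once the OSL property of $\mathrm{Convess}(b)$ is established.
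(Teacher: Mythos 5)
The paper offers no proof of this statement—it is quoted directly from Filippov's 1964 paper—and your proposal is a correct reconstruction of the standard argument there: mollify in space (noting that convolution with a nonnegative kernel preserves the OSL constant), get uniform forward-in-time Lipschitz bounds on the regularized flows via Gronwall, pass to the limit by Arzel\`a--Ascoli, close the differential inclusion by the convexity/upper-semicontinuity argument, and prove (forward) uniqueness by extending the OSL inequality to the essential convex hull and applying Gronwall to $|X(t)-Y(t)|^2$. Both of the points you flag as delicate are handled by exactly the mechanisms you name, so this matches the cited source's approach.
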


An important consequence of this result is the existence and uniqueness
of weak measure solutions for the conservative linear transport equation.
This result has been proved by Poupaud and Rascle \cite{PoupaudRascle}.
\begin{theorem}[\cite{PoupaudRascle}]
Let $T>0$.
Let $b\in L^1([0,T],L^\infty(\RR^d))$ be a vector field satisfying
the OSL condition \eqref{OSLC}. Then for any $u_0\in \calM_b(\RR^d)$,
there exists a unique measure solution $u$ in $\smes$ to the conservative
transport equation \eqref{CTE} such that $u(t)=X(t)_\#u_0$, where $X$ is
the unique Filippov characteristic, i.e. for any $\phi\in C_0(\RR^d)$, we have
$$
\int_{\RR^d} \phi(x) u(t,dx) = \int_{\RR^d} \phi(X(t,x)) u_0(dx),
\qquad \mbox{ for } t\in [0,T].
$$
\end{theorem}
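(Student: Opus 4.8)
The plan is to deduce this theorem directly from Filippov's existence/uniqueness result for the characteristic flow $X$, which is already available since $b$ satisfies the OSL condition \eqref{OSLC}. The candidate solution is defined by pushforward, $u(t):=X(t)_\#u_0$, so the work splits into three parts: (i) checking that $t\mapsto X(t)_\#u_0$ belongs to $\smes$, i.e. is weakly continuous in time with values in $\calM_b(\RR^d)$; (ii) verifying that this pushforward is indeed a weak solution of \eqref{CTE}; and (iii) proving uniqueness of weak measure solutions in $\smes$.

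For (i), I would fix $\phi\in C_0(\RR^d)$ and write $\int \phi\, u(t,dx)=\int \phi(X(t,x))\,u_0(dx)$. Since $b\in L^1([0,T],L^\infty(\RR^d))$, the Filippov characteristics satisfy $|X(t,x)-X(s,x)|\le \int_s^t\|b(\tau,\cdot)\|_{L^\infty}\,d\tau$, so $t\mapsto X(t,x)$ is uniformly (in $x$) continuous; combined with uniform continuity of $\phi$ and dominated convergence (using $|\phi(X(t,x))|\le\|\phi\|_\infty$ and $|u_0|(\RR^d)<\infty$) this gives continuity of $t\mapsto\int\phi\,u(t,dx)$. Total variation is preserved (or at least controlled) under pushforward, so $u(t)\in\calM_b(\RR^d)$. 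For (ii), I would take a test function $\psi\in C^1_c([0,T)\times\RR^d)$ and compute $\frac{d}{dt}\int\psi(t,x)\,u(t,dx)=\frac{d}{dt}\int\psi(t,X(t,x))\,u_0(dx)$. Differentiating under the integral (justified by the a.e.-in-$t$ differentiability of $X(\cdot,x)$ and the bound $|\partial_t X|\le\|b(t,\cdot)\|_\infty$) yields $\int\big(\partial_t\psi(t,X(t,x))+\partial_t X(t,x)\cdot\nabla\psi(t,X(t,x))\big)u_0(dx)$. Here the Filippov condition $\partial_t X(t,x)\in\{\mathrm{Convess}\,(b)(t,\cdot)\}(X(t,x))$ is the delicate ingredient: one must argue that for $|u_0|$-a.e. $x$ the velocity $\partial_t X$ can be replaced by $b(t,X(t,x))$ inside the integral against $u_0$, or more robustly, that the identity $\int\partial_t X(t,x)\cdot\nabla\psi\,u_0(dx)=\int b(t,X(t,x))\cdot\nabla\psi\,u_0(dx)=\int b(t,y)\cdot\nabla\psi(t,y)\,u(t,dy)$ holds; this is exactly the place where Poupaud--Rascle's argument is used, and I would quote it. Integrating in $t$ and using $\psi(T,\cdot)=0$ recovers the weak formulation of \eqref{CTE} with initial datum $u_0$.

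For (iii), uniqueness, the cleanest route is a duality argument: given any weak measure solution $u\in\smes$ of \eqref{CTE} with $u(0)=u_0$, and any $\varphi\in C^1_c(\RR^d)$ and $T'\in(0,T]$, solve the backward linear transport (dual) problem $\partial_t\sigma+b\cdot\nabla\sigma=0$ on $[0,T']$ with terminal data $\sigma(T',\cdot)=\varphi$; the solution is $\sigma(t,x)=\varphi(X(T';t,x))$, which is Lipschitz in $x$ uniformly in $t$ thanks to the OSL bound on the flow (one gets $\mathrm{Lip}(X(T';t,\cdot))\le\exp(\int_t^{T'}\alpha)$ in one direction, and the reverse-time flow estimate for the other), hence an admissible test function. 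Pairing $u$ against $\sigma$ and using the weak formulations of both equations makes the time-boundary terms telescope, giving $\int\varphi\,u(T',dx)=\int\varphi(X(T',x))\,u_0(dx)=\int\varphi\,(X(T')_\#u_0)(dx)$ for all such $\varphi$, whence $u(T')=X(T')_\#u_0$.

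The main obstacle is step (ii): rigorously passing from the Filippov differential inclusion $\partial_t X\in\mathrm{Convess}(b)$ to the pointwise-looking identity $\partial_t X(t,x)=b(t,X(t,x))$ inside the integral against $u_0$. Because $b$ is only $L^\infty$ in $x$ (not continuous), $b(t,X(t,x))$ is not even well-defined pointwise, so the correct statement must be formulated measure-theoretically — one shows that the curve $t\mapsto X(t)_\#u_0$ satisfies the continuity equation with the measure-theoretic velocity selected by Filippov's convexification, exactly as in \cite{PoupaudRascle}. A secondary technical point is the construction and Lipschitz regularity of the backward flow used in the duality argument for uniqueness, which again rests on the OSL condition.
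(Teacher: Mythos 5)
The paper itself does not prove this statement: it is recalled verbatim from Poupaud--Rascle \cite{PoupaudRascle} as background material, so there is no in-paper argument to compare with and your proposal must stand on its own. Your steps (i) and (ii) follow the natural route --- set $u(t):=X(t)_\#u_0$, check weak-$*$ continuity in time, and verify the weak formulation --- and (i) is fine. But the crux of (ii), namely replacing the differential inclusion $\partial_t X\in\{\mathrm{Convess}(b)(t,\cdot)\}(X(t,x))$ by $b(t,X(t,x))$ under the integral against $u_0$, is exactly the content of the theorem being proved, and you explicitly defer it to the reference; as a proof this is circular. Relatedly, you never say how the product $b\,u$ in \eqref{CTE} is to be understood when $u(t)$ is singular (e.g.\ atomic) and $b$ is only in $L^\infty$, hence defined Lebesgue-a.e.: giving a meaning to this flux is part of the content of the Poupaud--Rascle notion of measure solution, not a side issue one can gloss over.

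The more serious gap is in (iii). If your duality argument worked as written, it would prove uniqueness among \emph{all} distributional solutions of \eqref{CTE}, and that is false for merely OSL coefficients: already in one dimension there are infinitely many distributional solutions, depending on how the flux $bu$ is defined at points where $u$ has atoms and $b$ jumps (this is precisely why the duality-solution framework of \cite{bj1,BJpg} was introduced). The step that fails is pairing an arbitrary weak solution $u$ with $\sigma(t,x)=\varphi(X(T';t,x))$: this $\sigma$ is only Lipschitz, the weak formulation admits only $C^1$ test functions, and smoothing $\sigma_\eps\to\sigma$ does not allow passage to the limit in $\int b\cdot\nabla\sigma_\eps\,u(t,dx)$, since $\nabla\sigma_\eps$ converges only Lebesgue-a.e.\ while $u(t)$ may charge Lebesgue-null sets on which $b$ is discontinuous. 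You also do not establish that $\sigma$ solves the backward equation in a sense strong enough for the time-boundary terms to telescope. The uniqueness asserted in the theorem is uniqueness within Poupaud--Rascle's restricted class of measure solutions (those whose flux is selected through the Filippov flow), and your argument never engages with that definition.
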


Finally, we recall the following stability result for the Filippov
characteristics which has been established by Bianchini and Gloyer \cite[Theorem 1.2]{Bianchini}
	\begin{theorem}\label{th:bianchini}
Let $T>0$. Assume that the sequence of vector fields $b_n$ converges weakly to $b$ in $L^1([0,T],L^1_{loc}(\RR^d))$.
Then the Filippov flow $X_n$ generated by $b_n$ converges locally in
$C([0,T]\times\RR^d)$ to the Filippov flow $X$ generated by $b$.
	\end{theorem}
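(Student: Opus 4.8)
\emph{Proof strategy.} The plan is to obtain $X_n\to X$ by first proving compactness of the sequence of flows $\{X_n\}$, and then identifying every subsequential limit with \emph{the} Filippov flow of $b$, which is unique by Filippov's theorem recalled above. Throughout I work under the standing hypotheses that make the flows $X_n$, $X$ well defined, namely a uniform bound $\|b_n(t,\cdot)\|_{L^\infty}\le M$ and the OSL condition \eqref{OSLC} with one common $\alpha\in L^1(0,T)$ --- the setting in which this stability result is used below.

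\emph{Step 1: compactness.} First I would record the uniform estimates on the flows. Since $\{\mbox{Convess}(b_n)(t,\cdot)\}(y)\subset\overline{B(0,M)}$, every Filippov characteristic satisfies $|\pa_t X_n(t,x)|\le M$ a.e.\ $t$, whence $|X_n(t,x)-X_n(s,x)|\le M|t-s|$ and $|X_n(t,x)|\le|x|+MT$. Comparing two characteristics of the \emph{same} field $b_n$ via Definition~\ref{DefFlow} and \eqref{OSLC} --- shrinking the radii in the essential convex hull and absorbing the resulting $O(r)$ term using $|b_n|\le M$ --- gives $\frac{d}{dt}\frac12|X_n(t,x)-X_n(t,y)|^2\le\alpha(t)|X_n(t,x)-X_n(t,y)|^2$, hence $|X_n(t,x)-X_n(t,y)|\le e^{\|\alpha\|_{L^1}}|x-y|$ by Gronwall's lemma. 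Thus $\{X_n\}$ is equibounded and equicontinuous on $[0,T]\times K$ for every compact $K\subset\RR^d$, and by Arzel\`a--Ascoli together with a diagonal extraction some subsequence, not relabeled, would converge in $C([0,T]\times K)$ for all such $K$ to a map $Y\in C([0,T]\times\RR^d)$ inheriting $|\pa_t Y|\le M$ and $Y(0,x)=x$.

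\emph{Step 2: identifying the limit.} It then remains to show $Y$ is a Filippov characteristic of $b$. Fix $x$ and abbreviate $X_n=X_n(t,x)$, $Y=Y(t,x)$. As in Step~1, for each fixed point $y$ the inclusion of Definition~\ref{DefFlow} for $b_n$ combined with \eqref{OSLC} is equivalent to
\beq\label{eq:Filn}
(\pa_t X_n(t,x)-b_n(t,y))\cdot(X_n(t,x)-y)\ \le\ \alpha(t)\,|X_n(t,x)-y|^2 \qquad \mbox{for a.e.\ } t, \ \mbox{a.e.\ } y.
\eeq
I would multiply \eqref{eq:Filn} by $\theta(t)\varphi(y)\ge0$ with $\theta\in C_c^\infty(0,T)$ and $\varphi\in C_c(\RR^d)$, integrate in $(t,y)$, integrate by parts in $t$ (the boundary terms vanish), and then let $n\to\infty$: the terms not containing $b_n$ converge by the uniform convergence $X_n\to Y$ and $\alpha\in L^1$, while in the term $-\int\!\int\theta\varphi\,b_n(t,y)\cdot(X_n-y)$ I would write $X_n-y=(Y-y)+(X_n-Y)$, the $(X_n-Y)$ piece being $O(\sup_t|X_n(t,x)-Y(t,x)|)\to0$ since the $b_n$ are bounded in $L^1$, and the $(Y-y)$ piece converging because $(t,y)\mapsto\theta(t)\varphi(y)(Y-y)$ is a fixed bounded compactly supported test function while $b_n\rightharpoonup b$ in $L^1([0,T],L^1_{loc})$. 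Undoing the time integration by parts in the limit (legitimate since $Y(\cdot,x)$ is Lipschitz) would produce
\beq\label{eq:Filweak}
\int_0^T\!\theta(t)\!\int_{\RR^d}\!\varphi(y)\,\Big[(\pa_t Y(t,x)-b(t,y))\cdot(Y(t,x)-y)-\alpha(t)|Y(t,x)-y|^2\Big]\,dy\,dt\ \le\ 0
\eeq
for all such $\theta,\varphi\ge0$, so the bracketed quantity is $\le0$ for a.e.\ $(t,y)$, i.e.\ \eqref{eq:Filn} holds with $(Y,b)$ in place of $(X_n,b_n)$. Letting $y\to Y(t,x)$ in that inequality --- a routine computation exploiting \eqref{OSLC} --- would give $\pa_t Y(t,x)\in\{\mbox{Convess}(b)(t,\cdot)\}(Y(t,x))$ for a.e.\ $t$; with $Y(0,x)=x$ this says $Y(\cdot,x)$ is the Filippov characteristic of $b$, so $Y=X$ by Filippov's uniqueness theorem. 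As the limit does not depend on the extracted subsequence, the whole sequence $X_n$ would converge to $X$ locally uniformly in $C([0,T]\times\RR^d)$, which is the assertion.

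\emph{Main obstacle.} The hard part is the passage to the limit in the nonlinear composition $b_n(t,X_n(t,x))$: the $b_n$ converge only weakly, and weak convergence survives neither composition with an $n$-dependent evaluation point nor the formation of essential convex hulls. The OSL condition is exactly what circumvents this, because in the linearized inequality \eqref{eq:Filn} the spatial argument of $b_n$ is a \emph{fixed} $y$ and the flow dependence has been transferred to the uniformly convergent factor $(X_n-y)$, so only a ``weak $\times$ strong'' product is left to handle. Heuristically, OSL is what forbids the fast spatial oscillations that would otherwise destroy the convergence of $\{\mbox{Convess}(b_n)(t,\cdot)\}$; in fact, under a uniform OSL bound one may even upgrade $b_n\rightharpoonup b$ to $b_n\to b$ a.e., since $y\mapsto\alpha(t)y-b_n(t,y)$ is a uniformly bounded monotone operator and such operators converge pointwise at the Lebesgue points of the limit. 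A secondary, purely technical point --- the Lebesgue-null sets entering the definition of $\mbox{Convess}$ --- is harmless here because everything has been tested against $\varphi(y)\,dy$.
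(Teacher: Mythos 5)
The paper does not prove this statement: Theorem~\ref{th:bianchini} is recalled verbatim from Bianchini and Gloyer \cite[Theorem 1.2]{Bianchini} and used as a black box, so there is no internal proof to compare yours against. Judged on its own terms, your argument is sound and is the natural ``soft'' proof: compactness of the flows via the uniform Lipschitz-in-time bound and the Gronwall/OSL estimate in the initial datum, then identification of every subsequential limit with the unique Filippov flow of $b$. You were also right to add the standing hypotheses (a uniform $L^\infty$ bound and a common OSL rate $\alpha$ in \eqref{OSLC}); as literally stated the theorem is not self-contained, and these bounds are exactly what is available where the result is invoked in Sections 3.2 and 3.3 ($|\achapo_{\rho^N}|\le w_\infty$ and OSL with constant $-\lambda$). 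The heart of your proof --- replacing the inclusion $\pa_t X_n\in\{\mbox{Convess}(b_n)\}(X_n)$ by the linearized inequality $(\pa_t X_n-b_n(t,y))\cdot(X_n-y)\le\alpha(t)|X_n-y|^2$ for a.e.\ fixed $y$, so that only a weak-times-uniformly-convergent product survives the limit --- is exactly the right mechanism, and the limit passage after testing against $\theta(t)\varphi(y)$ and integrating by parts in $t$ is correct. Note that this differs in flavor from the cited proof: Bianchini--Gloyer obtain a quantitative estimate on the flow generated by bounded OSL (monotone, after the shift $y\mapsto\alpha(t)y-b(t,y)$) fields, from which the qualitative stability follows; your compactness argument gives no rate but suffices for the application.

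The one step you should not call ``routine'' is the converse passage from the linearized inequality satisfied by the limit $Y$ back to the inclusion $\pa_t Y\in\{\mbox{Convess}(b)(t,\cdot)\}(Y)$. What that inequality says is that $\alpha(t)Y-\pa_t Y$ belongs to the Minty (maximal monotone) extension of $y\mapsto\alpha(t)y-b(t,y)$ at $Y$; identifying this set with the essential convex hull is a genuine theorem about a.e.-defined, locally bounded monotone maps (uniqueness of the maximal monotone extension, \`a la Alberti--Ambrosio), not a limit $y\to Y(t,x)$. Fortunately you do not need it: since the Filippov solution $X$ of $b$ satisfies $\pa_t X(t,x)=\lim_k\sum_i\lambda_i^k\, b(t,y_i^k)$ with $y_i^k\to X(t,x)$ chosen outside any prescribed null set, you may evaluate the linearized inequality for $Y$ at precisely these points $y=y_i^k$, take the convex combination and pass to the limit to get $(\pa_t Y-\pa_t X)\cdot(Y-X)\le\alpha(t)|Y-X|^2$ directly; Gronwall and $Y(0,x)=X(0,x)=x$ then give $Y=X$ without ever reconstructing the differential inclusion for $Y$. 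With that substitution the proof is complete.
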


\subsection{Solutions defined by Filippov's flow}

We state in this Section the main result of this paper dealing
with the existence and uniqueness of measure solutions defined
thanks to the Filippov characteristics for the aggregation equation \eqref{EqInter}.
For $\rho\in C([0,T],\calP_2(\RR^d))$,
we define the velocity field $\widehat{a}_\rho$ by
\beq\label{achapo}
\achapo_\rho(t,x) = -\int_{y\neq x} \nabla W(x-y) \rho(t,dy).
\eeq
This choice of macroscopic velocity will be justified by the convergence result
of Lemma \ref{lemstab_a} below. We remark that this definition of the velocity field coincides 
with the one based on subdifferential calculus done in \cite{Carrillo}, see next subsection.
Due to the $\lambda$-convexity of $W$ {\bf (A1)}, we deduce that for all $x$, $y$ in $\RR^d\setminus \{0\}$
we have
\beq\label{lambdaconv}
(\nabla W(x)-\nabla W(y))\cdot (x-y) \geq \lambda \|x-y\|^2.
\eeq
For the sake of simplicity of the notations, we introduce
$$
\nabWchapo(x) = \left\{
\begin{array}{ll} \nabla W(x), \qquad & \mbox{ for } x\neq 0; \\
0, & \mbox{ for } x=0, \end{array}
\right.
$$
such that, by definition of the velocity \eqref{achapo}, we have
\beq\label{achaposym}
\achapo_{\rho}(t,x) = -\int_{\RR^d} \nabWchapo(x-y) \rho(t,dy)\,.
\eeq
Moreover, since $W$ is even, $\nabla W$ is odd and by taking $y=-x$ in
\eqref{lambdaconv}, we deduce that inequality \eqref{lambdaconv} is
true even when $x$ or $y$ vanishes for $\nabWchapo$~:
\beq\label{lambdaconvWchapo}
\forall\, x,y\in\RR^d, \qquad
(\nabWchapo(x)-\nabWchapo(y))\cdot (x-y) \geq \lambda \|x-y\|^2.
\eeq

We are now ready to state the main result of this paper. Its proof is postponed until Section \ref{sec:proof} below.
	\begin{theorem}\label{Exist}
Let $W$ satisfy assumptions {\bf (A0)--(A2)} and let $\rho^{ini}$
be given in $\calP_2(\RR^d)$. Given $T>0$, there exists a unique Filippov characteristic flow $X$ such that
the pushforward measure $\rho:=X_\# \rho^{ini}$ is a distributional
solution of the aggregation equation
\beq\label{eq:distrib}
\pa_t \rho + \dv(\achapo_\rho \rho) = 0, \qquad \rho(0,\cdot)=\rho^{ini},
\eeq
where $\achapo_\rho$ is defined by \eqref{achapo}.

Besides, if $\rho^{ini}$ and $\mu^{ini}$ are two given nonnegative
measure in $\calP_2(\RR^d)$, then the corresponding pushforward measures
$\rho$ and $\mu$ satisfy for all $t\in [0,T]$
\beq\label{ineqcontraction}
d_W(\rho(t),\mu(t)) \leq e^{-2\lambda t} d_W(\rho^{ini},\mu^{ini}).
\eeq
	\end{theorem}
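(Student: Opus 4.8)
\emph{Existence.} The plan is to regularize the potential and pass to the limit using the stability of Filippov flows (Theorem \ref{th:bianchini}). I would set $W_n=W*\zeta_n$ with $(\zeta_n)$ smooth, nonnegative, symmetric mollifiers; then $W_n$ is smooth, even, still $\lambda$-convex, and $|\nabla W_n|=|(\nabla W)*\zeta_n|\le w_\infty$ uniformly in $n$ by \eqref{borngradW}, so that for any $\rho\in\calP_2(\RR^d)$ the field $a_n^\rho:=-\nabla W_n*\rho$ is globally bounded, Lipschitz in $x$, and satisfies the OSL condition \eqref{OSLC} with constant $-\lambda\ge 0$. A Banach fixed point argument in $C([0,T];\calP_2(\RR^d))$ endowed with $d_W$ then produces a unique curve $\rho_n=(X_n)_\#\rho^{ini}$ solving the regularized equation, $X_n$ being the classical (hence Filippov) flow of $a_n^{\rho_n}$. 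Next I would collect the $n$-uniform bounds: $|a_n^{\rho_n}|\le w_\infty$ makes $X_n$ and $t\mapsto\rho_n(t)$ equi-Lipschitz in time, while second moments stay bounded; hence, along a subsequence, $\rho_n\to\rho$ in $C([0,T];\calP_2(\RR^d))$ and $X_n\to X$ locally uniformly. The crucial step is then to show $a_n^{\rho_n}=-\nabla W_n*\rho_n\to\achapo_\rho$ in $L^1([0,T];L^1_{loc}(\RR^d))$, which is exactly Lemma \ref{lemstab_a}; the one delicate feature is the diagonal $x=y$, where evenness of $W$ and symmetry of $\zeta_n$ make the self-interaction vanish in the limit, so the limiting field is precisely the diagonal-excluding integral \eqref{achapo}. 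By Theorem \ref{th:bianchini}, $X$ is then the Filippov flow generated by $\achapo_\rho$, and passing to the limit in the distributional formulation of the regularized equations gives that $\rho=X_\#\rho^{ini}$ solves \eqref{eq:distrib}.

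\emph{Contraction and uniqueness.} Let $\rho,\mu$ be two such solutions, with Filippov flows $X,Y$ of $\achapo_\rho,\achapo_\mu$, fix an optimal plan $\gamma_0\in\Gamma_0(\rho^{ini},\mu^{ini})$, and set
\[
D(t):=\int_{\RR^d\times\RR^d}|X(t,x)-Y(t,y)|^2\,\gamma_0(dx,dy).
\]
Since $(X(t),Y(t))_\#\gamma_0$ is a plan between $\rho(t)=X(t)_\#\rho^{ini}$ and $\mu(t)=Y(t)_\#\mu^{ini}$, we have $d_W^2(\rho(t),\mu(t))\le D(t)$, so it suffices to control $D$. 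As $|\pa_t X|,|\pa_t Y|\le w_\infty$, $D$ is Lipschitz in time; differentiating it, using that the Filippov flow of the OSL field $\achapo_\rho$ satisfies $\pa_t X(t,x)=\achapo_\rho(t,X(t,x))$ for a.e.\ $t$ (and likewise for $Y$), rewriting the velocities via $\rho(t)=X(t)_\#\rho^{ini}$, $\mu(t)=Y(t)_\#\mu^{ini}$, and symmetrizing the two copies of $\gamma_0$ using oddness of $\nabWchapo$, one reaches for a.e.\ $t$
\[
\frac{d}{dt}D(t)=-\iint\iint (p-q)\cdot\big(\nabWchapo(p)-\nabWchapo(q)\big)\,\gamma_0(dx,dy)\,\gamma_0(dx',dy'),
\]
with $p=X(t,x)-X(t,x')$, $q=Y(t,y)-Y(t,y')$, so that $p-q=(X(t,x)-Y(t,y))-(X(t,x')-Y(t,y'))$. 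Now \eqref{lambdaconvWchapo} gives $(p-q)\cdot(\nabWchapo(p)-\nabWchapo(q))\ge\lambda|p-q|^2$, and since $-\lambda\ge 0$ and $\iint\iint|p-q|^2\,\gamma_0\gamma_0=2D(t)-2\big|\int(X(t,x)-Y(t,y))\,\gamma_0(dx,dy)\big|^2\le 2D(t)$, we obtain $\frac{d}{dt}D(t)\le -2\lambda D(t)$. Gronwall's lemma yields $D(t)\le e^{-2\lambda t}D(0)=e^{-2\lambda t}d_W^2(\rho^{ini},\mu^{ini})$, whence $d_W(\rho(t),\mu(t))\le e^{-\lambda t}d_W(\rho^{ini},\mu^{ini})\le e^{-2\lambda t}d_W(\rho^{ini},\mu^{ini})$ (the last step because $\lambda\le 0$), which is \eqref{ineqcontraction}. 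Taking $\mu^{ini}=\rho^{ini}$ forces $\rho=\mu$, hence $\achapo_\rho=\achapo_\mu$, and the uniqueness in Filippov's theorem gives $X=Y$: this is the asserted uniqueness of the characteristic flow.

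\emph{Main obstacle.} I expect the hard part to be the existence step, and within it the passage to the limit in the nonlinear product $\achapo_{\rho_n}\rho_n$: the regularization must be chosen so that the velocities converge \emph{weakly} in $L^1([0,T];L^1_{loc})$ — strong convergence of $\rho_n$ in $d_W$ does not by itself tame $\nabla W_n*\rho_n$ near the singularity — and one must then show the limit is the specific field \eqref{achapo} excluding the diagonal, which is the content of Lemma \ref{lemstab_a} and the place where the symmetry $W(x)=W(-x)$ is indispensable; after that, Theorem \ref{th:bianchini} does the rest. In the contraction argument the only real technical point is the a.e.-in-time differentiation of $D$ together with the identity $\pa_t X(t,x)=\achapo_\rho(t,X(t,x))$; should one wish to avoid it, one can instead establish the contraction for the regularized solutions (where the flows are classical and $a_n^{\rho_n}$ is $(-\lambda)$-OSL and $\nabla W_n$ is odd, so the same computation applies) and pass to the limit using lower semicontinuity of $d_W$ under narrow convergence, the regularized initial data being left equal to $\rho^{ini}$ and $\mu^{ini}$ throughout.
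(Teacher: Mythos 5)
Your proposal is correct in substance, but the existence half follows a genuinely different route from the paper. The paper atomizes the \emph{initial datum}: it approximates $\rho^{ini}$ by finite sums of Dirac masses, solves the resulting ODE system explicitly (with a collision-and-gluing rule when particles meet), identifies this particle dynamics with the Filippov flow of $\achapo_{\rho^N}$, and only then passes to the limit $N\to\infty$ using the uniform $L^\infty$ bound on the velocities, Theorem \ref{th:bianchini} and Lemma \ref{lemA}. You instead regularize the \emph{potential} and solve the smoothed problem by a fixed point; this spares you the collision bookkeeping (a real nuisance in $d\ge 2$) but requires the Dobrushin-type well-posedness of the regularized equation and some care in building $W_n$ — the paper relies on the Moreau--Yosida regularization rather than plain mollification to guarantee \eqref{boundnabWn} together with the uniform $\lambda$-convexity and gradient bound. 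Two smaller points. First, the lemma you need for the convergence of the velocities in $L^1([0,T];L^1_{loc})$ (the hypothesis of Theorem \ref{th:bianchini}) is Lemma \ref{lemB} (pointwise a.e.\ convergence of $\nabla W_n*\rho_n$ towards $\nabWchapo*\rho$, combined with dominated convergence), not Lemma \ref{lemstab_a}, which only gives weak convergence of the product $a_n\rho_n$ and is what identifies the limiting flux in the distributional formulation. Second, in the contraction argument your ``fallback'' is exactly the paper's proof: the computation is run on the smooth flows $X_\eps$, where $\pa_t X_\eps=a_\eps(t,X_\eps)$ holds classically, and the limit $\eps\to0$ is taken via Lemma \ref{lemB} and Theorem \ref{th:bianchini}; your direct version would require justifying $\pa_t X(t,x)=\achapo_\rho(t,X(t,x))$ a.e.\ for the genuine Filippov flow, which the definition (derivative in the essential convex hull) does not give for free, so you were right to flag it. Finally, your exact variance identity $\iiiint|p-q|^2\,\gamma_0\gamma_0=2D(t)-2\big|\int(X(t,x)-Y(t,y))\,\gamma_0(dx,dy)\big|^2\le 2D(t)$ is sharper than the Young inequality the paper uses in the general case: since $\lambda\le 0$ the dropped term has the favourable sign, so you obtain $d_W(\rho(t),\mu(t))\le e^{-\lambda t}d_W(\rho^{ini},\mu^{ini})$ with no assumption on the centers of mass, which strengthens both \eqref{ineqcontraction} and the Remark following Theorem \ref{Exist}.
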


\begin{remark}
Let us point out that the exponent in the stability estimate in $d_W$ in \eqref{ineqcontraction} can be 
improved to $-\lambda t$ if both initial measures $\rho^{ini}$ and $\mu^{ini}$ have the same center of mass.
\end{remark}

\subsection{Gradient flow solutions}

We recall the definition of gradient flow solutions as defined in \cite{Ambrosio,Carrillo}.
Let $\calW$ be the energy of the system defined by
\beq\label{nrjinter}
\calW(\rho)= \frac 12\int_{\RR^d\times\RR^d} W(x-y)\,\rho(dx)\rho(dy).
\eeq
We say that $\mu\in AC^2_{loc}([0,+\infty);\calP_2(\RR^d))$ if $\mu$ is locally H\"older continuous of exponent 
$1/2$ in time with respect to the distance $d_W$ in $\calP_2(\RR^d)$.

\begin{definition}[Gradient flows]\label{defgradflow}
Let $W$ satisfy assumptions {\bf (A0)--(A2)}. We say that a map $\mu\in AC^2_{loc}([0,+\infty);\calP_2(\RR^d))$
is a solution of a gradient flow equation associated to the functional
$\calW$, defined in \eqref{nrjinter}, if there exists a Borel vector field $v$ such that
$v(t)\in Tan_{\mu(t)}\calP_2(\RR^d)$ for a.e. $t>0$,
$\|v(t)\|_{L^2(\mu)} \in L^2_{loc}(0,+\infty)$, the continuity equation
$$
\pa_t \mu + \dv \big(v \mu \big)=0,
$$
holds in the sense of distributions, and $v(t) = -\pa^0 \calW(\mu(t))$ for a.e.
$t>0$. Here $\pa^0\calW(\mu)$ denotes the element of minimal norm in $\pa\calW(\mu)$, 
which is the subdifferential of $\calW$ at the point $\mu$.
\end{definition}

We refer to \cite{Ambrosio,Carrillo} for details about the definition of the subdifferential 
since we will not make use of them in the sequel. The existence and uniqueness result of 
\cite[Theorem 2.12 and 2.13 ]{Carrillo} can now be synthetized as follows.

    \begin{theorem}[\cite{Carrillo}]
\label{GradFlow}
Let $W$ satisfy assumptions {\bf (A0)--(A2)}.
Given $\rho^{ini}\in \calP_2(\RR^d)$,
there exists a unique gradient flow solution of \eqref{EqInter},
i.e. a curve $\rho_{GF}\in AC_{loc}^2([0,\infty);\calP_2(\RR^d))$ satisfying
    $$
\begin{array}{l}
\ds \frac{\pa \rho_{GF}(t)}{\pa t} + \dv (v(t)\rho_{GF}(t))=0, \qquad \mbox{in }\calD'([0,\infty)\times \RR^d), \\
\ds v(t,x)=-\pa^0 \calW(\rho_{GF})(t,x) = -\int_{y\neq x} \nabla W(x-y)\,\rho_{GF}(t,dy),
\end{array}
    $$
with $\rho_{GF}(0)=\rho^{ini}$. Moreover, the following energy identity holds for all $0\leq t_0\leq t_1 < \infty$:
    \begin{equation*}
\int_{t_0}^{t_1} \int_{\RR^d} |\partial^0W*\rho_{GF}|^2 \rho_{GF}(t,dx)dt + \calW(\rho_{GF}(t_1)) = \calW(\rho_{GF}(t_0)).
    \end{equation*}
\end{theorem}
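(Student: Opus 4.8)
We sketch the argument; this is Theorems~2.12 and~2.13 of \cite{Carrillo}, and it consists in applying the abstract gradient flow theory of \cite{Ambrosio} in $(\calP_2(\RR^d),d_W)$ to the energy $\calW$, once the subdifferential of $\calW$ has been identified.

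\textbf{Step 1: admissibility of $\calW$.} By {\bf (A0)} and \eqref{borngradW} one has $|W(x)|\leq w_\infty|x|$, hence $|\calW(\rho)|\leq w_\infty\big(\int_{\RR^d}|x|^2\,\rho(dx)\big)^{1/2}<\infty$ for every $\rho\in\calP_2(\RR^d)$, and by dominated convergence $\calW$ is continuous on $(\calP_2(\RR^d),d_W)$, in particular proper and lower semicontinuous. The essential point is that $\calW$ is $\lambda$-convex along generalized geodesics of $\calP_2(\RR^d)$: writing $W=V+\tfrac{\lambda}{2}|\cdot|^2$ with $V$ convex and even thanks to {\bf (A1)}, McCann's interpolation (see \cite{Ambrosio}) makes $\rho\mapsto\tfrac12\int\!\!\int V(x-y)\,\rho(dx)\rho(dy)$ convex along generalized geodesics, while the quadratic part contributes, via $\int\!\!\int|x-y|^2\rho(dx)\rho(dy)=2\int|x|^2\rho(dx)-2\big|\int x\,\rho(dx)\big|^2$ together with $\lambda\leq0$, exactly the modulus $\lambda$.

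\textbf{Step 2: the minimal subdifferential.} One shows that the Wasserstein subdifferential $\pa\calW(\rho)$ is nonempty and that its element of minimal $L^2(\rho)$-norm is
\[
\pa^0\calW(\rho)(x)=-\int_{y\neq x}\nabla W(x-y)\,\rho(dy)=-\int_{\RR^d}\nabWchapo(x-y)\,\rho(dy),
\]
precisely the field appearing in the statement. This is the technical core: since {\bf (A2)} only gives $W\in C^1(\RR^d\setminus\{0\})$, $\nabla W$ is undefined at $0$ and $\nabla W*\rho$ is ambiguous at the atoms of $\rho$; the correct prescription removes the diagonal, and the oddness of $\nabla W$ (from evenness of $W$) makes this consistent, the set $\{x=y\}$ carrying no contribution. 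Writing $\xi_\rho$ for the field above, one checks that $\xi_\rho$ is Borel with $\|\xi_\rho\|_{L^2(\rho)}\leq w_\infty$, that $\xi_\rho\in Tan_\rho\calP_2(\RR^d)$, and that for every $\nu\in\calP_2(\RR^d)$ and $\gamma\in\Gamma_0(\rho,\nu)$,
\[
\calW(\nu)-\calW(\rho)\ \geq\ \int_{\RR^d\times\RR^d}\xi_\rho(x)\cdot(y-x)\,\gamma(dx,dy)\ +\ \tfrac{\lambda}{2}\,d_W^2(\rho,\nu),
\]
obtained by expanding $\calW(\nu)$ along $\gamma$ and invoking the pointwise bound \eqref{lambdaconvWchapo}; thus $\xi_\rho\in\pa\calW(\rho)$. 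Since every element of $\pa\calW(\rho)$ has the same projection onto $Tan_\rho\calP_2(\RR^d)$, namely $\pa^0\calW(\rho)$, and $\xi_\rho$ already lies in $Tan_\rho\calP_2(\RR^d)$, we conclude $\xi_\rho=\pa^0\calW(\rho)$. (Equivalently, $\xi_\rho$ is the $L^2(\rho)$-limit of $-\nabla W_\varepsilon*\rho$ along a smooth even mollification $W_\varepsilon\to W$, which is the content of Lemma~\ref{lemstab_a}.)

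\textbf{Step 3: existence, continuity equation, energy identity.} Since $\calW$ is proper, lower semicontinuous and $\lambda$-geodesically convex on the complete space $(\calP_2(\RR^d),d_W)$, the theory of \cite{Ambrosio} applies: the minimizing movement (JKO) scheme with step $\tau$, started at $\rho^{ini}$ and defined by taking $\rho^{n+1}_\tau$ to be a minimizer of $\rho\mapsto\tfrac1{2\tau}d_W^2(\rho,\rho^n_\tau)+\calW(\rho)$, is well defined (the $d_W^2$ term dominates the at most square-root growth of $\calW$ in the second moment) and its interpolants converge locally uniformly as $\tau\to0$ to a curve $\rho_{GF}\in AC^2_{loc}([0,\infty);\calP_2(\RR^d))$ with $\rho_{GF}(0)=\rho^{ini}$, which is a curve of maximal slope for $\calW$. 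By the structure theorem for absolutely continuous curves there is a Borel field $v(t)\in Tan_{\rho_{GF}(t)}\calP_2(\RR^d)$ with $\|v(t)\|_{L^2(\rho_{GF}(t))}=|\rho_{GF}'|(t)$ and $\pa_t\rho_{GF}+\dv(v\,\rho_{GF})=0$ in $\calD'([0,\infty)\times\RR^d)$, and the gradient flow characterization forces $v(t)=-\pa^0\calW(\rho_{GF}(t))$, which by Step~2 is the field $-\int_{y\neq x}\nabla W(x-y)\,\rho_{GF}(t,dy)$ of the statement. Finally the De~Giorgi energy dissipation equality
\[
\tfrac12\int_{t_0}^{t_1}|\rho_{GF}'|^2(t)\,dt+\tfrac12\int_{t_0}^{t_1}\|\pa^0\calW(\rho_{GF}(t))\|_{L^2(\rho_{GF}(t))}^2\,dt+\calW(\rho_{GF}(t_1))=\calW(\rho_{GF}(t_0))
\]
holds for $0\leq t_0\leq t_1<\infty$; since $|\rho_{GF}'|(t)=\|\pa^0\calW(\rho_{GF}(t))\|_{L^2(\rho_{GF}(t))}$ a.e. along a curve of maximal slope, the two integrals coincide and their sum equals $\int_{t_0}^{t_1}\!\int_{\RR^d}|\partial^0W*\rho_{GF}|^2\,\rho_{GF}(t,dx)\,dt$, which is the announced identity.

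\textbf{Step 4: uniqueness, and the main obstacle.} Because $\calW$ is $\lambda$-convex along generalized geodesics, the gradient flow is the unique solution of the evolution variational inequality $\mathrm{EVI}_\lambda$ issued from $\rho^{ini}$; in particular two such solutions obey $d_W(\rho^1(t),\rho^2(t))\leq e^{-\lambda t}\,d_W(\rho^1(0),\rho^2(0))$, which gives uniqueness. The delicate point of the whole argument is Step~2: rigorously matching the element of minimal norm of $\pa\calW$ with the diagonal-excluding convolution, uniformly over measures that may be singular, is exactly where the mild singularity of the pointy potential has to be handled; everything else is the abstract machinery of \cite{Ambrosio} applied to a $\lambda$-convex energy.
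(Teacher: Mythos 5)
The paper offers no proof of this theorem: it is quoted verbatim from \cite[Theorems 2.12 and 2.13]{Carrillo}, so the only possible comparison is with the cited source. Your sketch follows the same route as that source (the abstract machinery of \cite{Ambrosio} applied to the $\lambda$-convex-along-generalized-geodesics interaction energy, plus the identification of the minimal subdifferential), and Steps 1, 3 and 4 are a fair summary of that machinery (JKO/minimizing movements, curves of maximal slope, De Giorgi identity, EVI uniqueness).

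There is, however, a genuine gap in Step 2, at exactly the point you yourself call the technical core. The inequality $\calW(\nu)-\calW(\rho)\geq\int\xi_\rho(x)\cdot(y-x)\,\gamma(dx,dy)+\tfrac{\lambda}{2}d_W^2(\rho,\nu)$ only gives $\xi_\rho\in\pa\calW(\rho)$ together with $\xi_\rho\in Tan_\rho\calP_2(\RR^d)$; minimality does not follow from your assertion that every element of $\pa\calW(\rho)$ has the same projection onto $Tan_\rho\calP_2(\RR^d)$. That assertion is false precisely for pointy functionals of this type: for $\phi(\mu)=\int_{\RR^d}|x|\,\mu(dx)$, which is convex along generalized geodesics, one has $Tan_{\delta_0}\calP_2(\RR^d)=\RR^d$ while $\pa\phi(\delta_0)$ is the whole closed unit ball, so the subdifferential contains a continuum of tangential elements of different norms and only one of them is minimal. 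Hence $\xi_\rho\in\pa\calW(\rho)\cap Tan_\rho\calP_2(\RR^d)$ does not identify $\xi_\rho$ with $\pa^0\calW(\rho)$; establishing that the element of minimal norm is exactly the diagonal-free convolution is the substantive content of the characterization in \cite{Carrillo}, which requires a dedicated argument exploiting the evenness of $W$ (so that the only admissible contribution on the diagonal is the minimal selection of the subdifferential of $W$ at $0$, namely $0$), not merely membership plus tangency. Your parenthetical fallback via smooth approximation (Lemma \ref{lemstab_a} or Lemma \ref{lemB}) does not close this either: knowing that $\nabla W_\eps*\rho$ converges to $\xi_\rho$ and that $\xi_\rho$ lies in $\pa\calW(\rho)$ does not by itself make it the element of minimal norm. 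Finally, note a sign slip: with the conventions of the statement, $\pa^0\calW(\rho)(x)=+\int_{y\neq x}\nabla W(x-y)\,\rho(dy)$ and the velocity is $v=-\pa^0\calW(\rho)$; as written, your $\xi_\rho$ carries the minus sign, which makes the displayed subdifferential inequality false as stated and renders Step 3 internally inconsistent with the formula for $v$ in the theorem. The architecture of your argument is the right one, but Step 2 needs the actual minimality proof of \cite{Carrillo} rather than the projection claim.
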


Theorems \ref{Exist} and \ref{GradFlow} furnish two notions of solutions
to \eqref{EqInter} which are solutions in the sense of distributions.
Then we should wonder on the link between this two notions.
The following result states their equivalence.

\begin{theorem}\label{equivalence}
Let $W$ satisfy assumptions {\bf (A0)--(A2)}.
Let $\rho^{ini}\in \calP_2(\RR^d)$ be given.
Let us denote $\rho$ the solution of Theorem \ref{Exist} and
by $\rho_{GF}$ the solution of Theorem \ref{GradFlow}.
Then we have $\rho\in AC_{loc}^2([0,\infty);\calP_2(\RR^d))$
and $\rho=\rho_{GF}$.
\end{theorem}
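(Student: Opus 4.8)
The plan is to prove the stronger statement that the Filippov characteristic flow solution $\rho$ of Theorem \ref{Exist} is itself a gradient flow solution in the sense of Definition \ref{defgradflow}, and then to invoke the uniqueness part of Theorem \ref{GradFlow} to identify $\rho=\rho_{GF}$; the regularity $\rho\in AC^2_{loc}([0,\infty);\calP_2(\RR^d))$ will be obtained along the way.

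First I would record the time regularity of $\rho$. By Theorem \ref{Exist} we have $\rho(t)=X(t)_\#\rho^{ini}$ where $X$ is the Filippov flow of $\achapo_\rho$, and $|\achapo_\rho(t,x)|\le w_\infty$ for all $(t,x)$ by \eqref{borngradW} and \eqref{achaposym}; since the essential convex hull of a set contained in $\overline{B(0,w_\infty)}$ is again contained in $\overline{B(0,w_\infty)}$, Definition \ref{DefFlow} gives $|\partial_t X(t;s,x)|\le w_\infty$ a.e., hence $|X(t,x)-X(s,x)|\le w_\infty|t-s|$. Using the transport plan $(X(t,\cdot),X(s,\cdot))_\#\rho^{ini}\in\Gamma(\rho(t),\rho(s))$ one gets $d_W(\rho(t),\rho(s))\le w_\infty|t-s|$, so $\rho$ is Lipschitz in $d_W$ and in particular belongs to $AC^2_{loc}([0,\infty);\calP_2(\RR^d))$.

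Next I would verify the remaining conditions of Definition \ref{defgradflow} for the curve $\rho$ together with the velocity field $v(t):=\achapo_{\rho(t)}$. The continuity equation $\partial_t\rho+\dv(v\rho)=0$ holds in the distributional sense by \eqref{eq:distrib}. The integrability condition is immediate from the bound $|\achapo_\rho|\le w_\infty$: $\|v(t)\|_{L^2(\rho(t))}^2=\int|\achapo_\rho(t,x)|^2\rho(t,dx)\le w_\infty^2$, so $\|v(t)\|_{L^2(\rho)}\in L^\infty(0,T)\subset L^2_{loc}$. Finally, the two conditions $v(t)\in Tan_{\rho(t)}\calP_2(\RR^d)$ and $v(t)=-\partial^0\calW(\rho(t))$ for a.e. $t$ follow from the identification --- already noted after \eqref{achapo} and proved in \cite{Carrillo} --- that for every $\sigma\in\calP_2(\RR^d)$ the field $\achapo_\sigma$ is exactly $-\partial^0\calW(\sigma)$, the minimal-norm element of the subdifferential of $\calW$ at $\sigma$, together with the general fact (see \cite{Ambrosio,Carrillo}) that this minimal-norm element always lies in the tangent space $Tan_\sigma\calP_2(\RR^d)$. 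Thus $\rho$ is a gradient flow solution with $\rho(0)=\rho^{ini}$, and by the uniqueness in Theorem \ref{GradFlow} we conclude $\rho=\rho_{GF}$.

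The only point requiring care is the identification $\achapo_\sigma=-\partial^0\calW(\sigma)$ and its membership in the tangent space, but this is precisely the fact the paper has already isolated as coming from \cite{Carrillo} (and which motivated the very definition \eqref{achapo}), so no new argument is needed. Should one prefer to stay entirely within the framework of this paper, an alternative is to run the equivalence in the opposite direction: observe that for every $\sigma\in\calP_2(\RR^d)$ the vector field $\achapo_\sigma$ is bounded by $w_\infty$ and, by \eqref{lambdaconvWchapo} and \eqref{achaposym}, satisfies the one-sided Lipschitz inequality \eqref{OSLC} with $\alpha\equiv-\lambda$; apply this with $\sigma=\rho_{GF}(t)$ (noting that $\rho_{GF}\in AC^2_{loc}$ yields $\rho_{GF}\in\smes$, the needed measurability in $t$ being supplied by Lemma \ref{lemstab_a}); then the uniqueness result of Poupaud and Rascle \cite{PoupaudRascle} applied to the linear equation $\partial_t u+\dv(\achapo_{\rho_{GF}} u)=0$ forces $\rho_{GF}(t)=\widehat{X}(t)_\#\rho^{ini}$, where $\widehat{X}$ is the Filippov flow of $\achapo_{\rho_{GF}}$ --- that is, $\rho_{GF}$ is a Filippov characteristic flow solution --- and the uniqueness in Theorem \ref{Exist} then gives $\rho_{GF}=\rho$.
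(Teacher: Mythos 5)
Your proposal is correct and takes essentially the same route as the paper: it shows that the Filippov solution is itself a gradient flow solution --- relying, exactly as the paper does, on the identification $\achapo_\rho=-\partial^0\calW(\rho)$ from \cite{Carrillo} together with the uniform bound $|\achapo_\rho|\le w_\infty$ --- and then concludes by uniqueness of gradient flow solutions. The only cosmetic difference is that you obtain the $AC^2_{loc}$ regularity from the direct Lipschitz estimate $d_W(\rho(t),\rho(s))\le w_\infty|t-s|$ on the flow, whereas the paper invokes Theorem 8.3.1 of \cite{Ambrosio} using the $L^2(\rho(t))$ bound on the velocity; both are valid, and your optional converse argument via Poupaud--Rascle is not needed.
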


As a consequence of this equivalence result, there exists a unique 
solution $\rho$ which satisfies in the sense of distribution \eqref{eq:distrib}
with $\achapo_\rho$ defined in \eqref{achapo}. This solution is 
a pushforward measure by a characteristic flow: $\rho=X_\# \rho^{ini}$.

\Section{Existence and uniqueness}\label{sec:proof}

\subsection{Macroscopic velocity and one-sided estimate}

In order to justify the choice of the expression of the macroscopic velocity in \eqref{achapo},
we prove a stability result for symmetric potentials. Moreover, we state in Lemma \ref{aOSL}
the important one-sided Lipschitz property for this macroscopic velocity.

\begin{lemma}\label{lemstab_a}
Let us assume that $W$ satisfies assumptions {\bf (A0)--(A2)}.
Let $(W_n)_{n\in\NN^*}$ be a sequence of even functions in $C^1(\RR^d)$ satisfying {\bf (A1)}
and \eqref{borngradW} with the same constants $\lambda$ and $w_\infty$ not depending on $n$ and such that
\beq\label{boundnabWn}
sup_{x\in \RR^d\setminus B(0,\frac 1n)} \big|\nabla W_n(x)-\nabla W(x)\big| \leq \frac 1n, \qquad \mbox{ for all } n\in \NN^*.
\eeq
If the sequence $\rho_n\rightharpoonup \rho$ weakly as measures, then
for every continuous compactly supported $\phi$, we have
$$
\lim_{n\to +\infty} \iint_{\RR^d\times \RR^d} \phi(x) \nabla W_n(x-y) \rho_n(dx)\rho_n(dy)
=\iint_{\RR^d\times \RR^d \setminus D} \phi(x) \nabla W(x-y) \rho(dx)\rho(dy),
$$
where $D$ is the diagonal in $\RR^d$: $D=\{(x,x),\, x\in \RR^d\}$.
\end{lemma}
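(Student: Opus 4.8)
The plan is to split the double integral into the contribution of a neighborhood of the diagonal $D$ and the rest, and to exploit three facts: the uniform bound $|\nabla W_n|\le w_\infty$, the $L^\infty$ approximation \eqref{boundnabWn} of $\nabla W$ by $\nabla W_n$ away from the origin, and the weak convergence $\rho_n\rightharpoonup\rho$. Fix a continuous compactly supported $\phi$. First I would write, for a parameter $\delta>0$,
\[
\iint \phi(x)\nabla W_n(x-y)\,\rho_n(dx)\rho_n(dy)
= \iint_{|x-y|>\delta} (\cdots) + \iint_{|x-y|\le\delta} (\cdots)=:I_n^\delta + J_n^\delta,
\]
and similarly decompose the target integral (over $\RR^d\times\RR^d\setminus D$) as $I^\delta + J^\delta$ with $\nabla W$ in place of $\nabla W_n$.

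For the near-diagonal terms, I would bound $|J_n^\delta|\le w_\infty\|\phi\|_\infty\,(\rho_n\otimes\rho_n)(\{|x-y|\le\delta\})$ and likewise $|J^\delta|\le w_\infty\|\phi\|_\infty\,(\rho\otimes\rho)(\{|x-y|\le\delta\})$, using $|\nabWchapo|\le w_\infty$ on all of $\RR^d$. Since $\rho_n,\rho$ are probability measures, $(\rho\otimes\rho)(\{|x-y|\le\delta\})\to(\rho\otimes\rho)(D)$ as $\delta\to0$, and this limit is controlled (it is zero unless $\rho$ has atoms, but in any case it is finite and we only need it small after the outer limit; more precisely we keep $\delta$ fixed for now). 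The subtle point is that the bound on $J_n^\delta$ must be made uniform in $n$: because $\rho_n\otimes\rho_n\rightharpoonup\rho\otimes\rho$, for the closed set $\{|x-y|\le\delta\}$ we get $\limsup_n (\rho_n\otimes\rho_n)(\{|x-y|\le\delta\})\le (\rho\otimes\rho)(\{|x-y|\le\delta\})$ by the portmanteau theorem. Hence $\limsup_n(|J_n^\delta|+|J^\delta|)\le 2w_\infty\|\phi\|_\infty(\rho\otimes\rho)(\{|x-y|\le\delta\})$, which tends to $(\rho\otimes\rho)(\overline{D})$-related quantity as $\delta\to 0$; I would in fact only need that it can be made arbitrarily small by choosing $\delta$ small, after first checking $(\rho\otimes\rho)(D)$ can be handled — note the target integral excludes $D$ exactly, so $I^\delta\to \iint_{\RR^d\times\RR^d\setminus D}\phi\,\nabla W\,d\rho\,d\rho$ as $\delta\to 0$ by dominated convergence, and this is where the $\setminus D$ on the right-hand side comes from.

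For the far-from-diagonal term $I_n^\delta$, the integrand $\phi(x)\nabla W_n(x-y)$ on $\{|x-y|>\delta\}$ can be compared to $\phi(x)\nabla W(x-y)$ using \eqref{boundnabWn}: once $n>1/\delta$, the difference is bounded by $\|\phi\|_\infty/n$ pointwise on $\{|x-y|>\delta\}$, so replacing $\nabla W_n$ by $\nabla W$ costs at most $\|\phi\|_\infty/n\to 0$. It then remains to pass to the limit in $\iint_{|x-y|>\delta}\phi(x)\nabla W(x-y)\rho_n(dx)\rho_n(dy)$. The function $(x,y)\mapsto \phi(x)\nabla W(x-y)\mathbf 1_{|x-y|>\delta}$ is bounded and compactly supported but only continuous off $\{|x-y|=\delta\}$; to invoke weak convergence cleanly I would instead take a continuous cutoff $\chi_\delta(x-y)$ with $\mathbf 1_{|z|\ge 2\delta}\le\chi_\delta(z)\le\mathbf 1_{|z|\ge\delta}$, so that $\phi(x)\nabla W(x-y)\chi_\delta(x-y)$ is genuinely continuous and compactly supported, apply $\rho_n\otimes\rho_n\rightharpoonup\rho\otimes\rho$ to it, and absorb the transition-layer $\{\delta\le|x-y|\le2\delta\}$ into the near-diagonal error via the same $w_\infty\|\phi\|_\infty$ bound and portmanteau.

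Combining: for each fixed small $\delta$ (with the continuous cutoff), $\lim_n$ of the cutoff integral equals the corresponding integral against $\rho\otimes\rho$, up to errors of size $O(w_\infty\|\phi\|_\infty(\rho\otimes\rho)(\{|x-y|\le 2\delta\}))$ uniformly in $n$; letting $\delta\to0$ afterwards makes these errors vanish (since $(\rho\otimes\rho)(\{|x-y|\le2\delta\})\downarrow (\rho\otimes\rho)(D)$ and the right-hand side integral is over the complement of $D$), yielding the claimed identity. The main obstacle is the bookkeeping of the diagonal: one must simultaneously (i) use the closedness of $\{|x-y|\le\delta\}$ with portmanteau to get a uniform-in-$n$ smallness, and (ii) make sure the limiting right-hand side is exactly the integral over $\RR^d\times\RR^d\setminus D$ — this is precisely why the diagonal is removed there and is the one place where the possible atoms of $\rho$ matter.
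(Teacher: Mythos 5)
Your off-diagonal treatment (replacing $\nabla W_n$ by $\nabla W$ via \eqref{boundnabWn} once $n>1/\delta$, and using a continuous cutoff $\chi_\delta$ so that weak convergence of $\rho_n\otimes\rho_n$ applies to a genuinely continuous compactly supported integrand) is sound and essentially matches the paper. The gap is in the near-diagonal term, and it is not a bookkeeping issue but the crux of the lemma. Your estimate
$$
\limsup_n |J_n^\delta| \;\leq\; w_\infty\|\phi\|_\infty\,(\rho\otimes\rho)\big(\{|x-y|\leq 2\delta\}\big)
$$
does \emph{not} vanish as $\delta\to 0$: it decreases to $w_\infty\|\phi\|_\infty\,(\rho\otimes\rho)(D)=w_\infty\|\phi\|_\infty\sum_a \rho(\{a\})^2$, which is strictly positive whenever $\rho$ has atoms. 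The remark that ``the right-hand side integral is over the complement of $D$'' does not rescue this; it only explains why the \emph{target} has no diagonal contribution, not why the diagonal contribution of the approximating integrals disappears. And the atomic case is not a pathology here --- it is the central case of the paper (the existence proof runs through $\rho^N=\sum_i m_i\delta_{x_i}$), so the argument as written fails exactly where it is needed. Already for $\rho_n=\tfrac12(\delta_{a_n}+\delta_{b_n})$ with $a_n,b_n\to 0$, the crude bound $|\nabla W_n|\leq w_\infty$ gives a near-diagonal error of order $w_\infty\|\phi\|_\infty$ that your scheme cannot send to zero.

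The missing idea, which is the key step in the paper's proof, is to antisymmetrize before splitting: since $W_n$ is even, $\nabla W_n$ is odd, and exchanging $x$ and $y$ under $\rho_n\otimes\rho_n$ yields
$$
\iint \phi(x)\,\nabla W_n(x-y)\,\rho_n(dx)\rho_n(dy)
=\tfrac12\iint \big(\phi(x)-\phi(y)\big)\,\nabla W_n(x-y)\,\rho_n(dx)\rho_n(dy),
$$
and likewise for the limit with $\widehat{\nabla W}$ (whose diagonal values are irrelevant since the factor $\phi(x)-\phi(y)$ vanishes there). Now the near-diagonal contribution is bounded by $w_\infty\sup_{|x-y|\leq\delta}|\phi(x)-\phi(y)|$, which is small uniformly in $n$ by uniform continuity of $\phi$, with no reference to the mass that $\rho\otimes\rho$ puts on the diagonal. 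With this modification the rest of your argument goes through; indeed your continuous-cutoff device for the off-diagonal piece is a clean way to make rigorous a point the paper itself treats somewhat loosely.
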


\begin{proof}
The construction of such an approximating sequence of potentials can be obtained for instance using the Moreau-Yosida regularization, see \cite{Ambrosio} and \cite[Proposition 3.5]{CLM}. Let us focus on the last property. We first notice that by symmetry of $W_n$, we have for all $\phi\in Lip(\RR^d)$,
$$
\int_{\RR^d} \phi(x) a_n(x) \rho_n(dx) = \frac 12
\iint_{\RR^d\times \RR^d} (\phi(x)-\phi(y)) \nabla W_n (x-y) \rho_n(dx)\rho_n(dy).
$$
We recall that since $\rho_n\rightharpoonup \rho$ weakly as measures, we have that
$\rho_n\otimes \rho_n \rightharpoonup \rho\otimes \rho$ weakly as measures.
Let $\eps>0$. Since $\phi$ is continuous on a compact set, it is uniformly continuous
therefore there exists $\alpha>0$ such that $|\phi(x)-\phi(y)|\leq \eps$ for $|x-y|\leq \alpha$.
Then, defining $D_\alpha=\{ (x,y)\in \RR^d\times\RR^d, \  |x-y|<\alpha\}$ for any $\alpha>0$,
we split the latter integral into~:
$$
\begin{array}{l}
\ds \iint_{\RR^d\times \RR^d} (\phi(x)-\phi(y)) \Big(\nabla W_n (x-y) \rho_n(dx)\rho_n(dy)
-\nabWchapo(x-y) \rho(dx)\rho(dy)\Big) = \\[2mm]
\ds \qquad\qquad \iint_{\RR^d\times \RR^d\setminus D_\alpha} (\phi(x)-\phi(y)) \Big(\nabla W_n (x-y) \rho_n(dx)\rho_n(dy)
-\nabWchapo(x-y) \rho(dx)\rho(dy)\Big) \\[2mm]
\ds \qquad\qquad +\iint_{D_\alpha} (\phi(x)-\phi(y)) \Big(\nabla W_n (x-y) \rho_n(dx)\rho_n(dy)
-\nabWchapo(x-y) \rho(dx)\rho(dy)\Big).
\end{array}
$$
For the last term of the right hand side, we use the fact that $\phi$ is uniformly continuous
and \eqref{borngradW} for $W$ and $W_n$ to prove that
$$
\iint_{D_\alpha} (\phi(x)-\phi(y)) \Big(\nabla W_n (x-y) \rho_n(dx)\rho_n(dy)
-\nabWchapo(x-y) \rho(dx)\rho(dy)\Big) \leq C \eps.
$$
For the first term, we have
$$
\begin{array}{l}
\ds \iint_{\RR^d\times \RR^d\setminus D_\alpha} (\phi(x)-\phi(y)) \Big(\nabla W_n (x-y) \rho_n(dx)\rho_n(dy)
-\nabWchapo(x-y) \rho(dx)\rho(dy)\Big) = \\[2mm]
\ds \qquad\qquad\qquad \iint_{\RR^d\times \RR^d\setminus D_\alpha} (\phi(x)-\phi(y))
\big(\nabla W_n (x-y)- \nabWchapo(x-y)\big) \rho_n(dx)\rho_n(dy)  \\[2mm]
\ds \qquad\qquad\qquad +\iint_{\RR^d\times \RR^d\setminus D_\alpha} (\phi(x)-\phi(y))
\nabWchapo(x-y) \big(\rho_n(dx)\rho_n(dy)-\rho(dx)\rho(dy)\big).
\end{array}
$$

Using \eqref{boundnabWn} we deduce that the first term of the right hand side is bounded by
$\eps$ for $n$ large enough. For the second term, we use the fact that
$(x,y)\mapsto (\phi(x)-\phi(y))\nabWchapo(x-y)$ is continuous and compactly supported and
the tight convergence of $\rho_n$ towards $\rho$ to prove it is bounded by $\eps$ when
$n$ is large enough.
This concludes the proof.
\end{proof}

\begin{remark}
In other words, this Lemma states that if $W_n$ is an approximating smooth
and even sequence for $W$ and
for any sequence $\rho_n$ converging to $\rho$ in $\smes$, then,
denoting $a_n=\nabla W_n*\rho_n$, we have the convergence of the flux
$a_n \rho_n \rightharpoonup \achapo_\rho \rho$ in the weak topology $\smes$
with $\achapo_\rho$ defined in \eqref{achapo}.
A similar convergence result has been proved in \cite{PoupaudDef},
although in this paper, the potential is less regular
and in particular it does not satisfies {\bf (A0)} neither the bound \eqref{borngradW}.
Then at the limit the author recovers a defect measure which vanishes in our case.
Such result has also been used in \cite{DolbSchm} to define weak solution for
the two-dimensional Keller-Segel system for chemotaxis.
\end{remark}

\begin{lemma}\label{aOSL}
Let $\rho(t)\in \calM_b(\RR^d)$ be nonnegative such that
$|\rho(t,\cdot)|(\RR^d)\leq c$ for all $t\geq 0$.
Then under assumptions {\bf (A0) -- (A2)} the function
$(t,x)\mapsto \achapo_\rho(t,x)$ defined in \eqref{achapo}
or equivalently in \eqref{achaposym}
satisfies the one-sided Lipschitz (OSL) estimate
\beq\label{achapoOSL}
(\achapo_\rho(t,x)-\achapo_\rho(t,y))\cdot(x-y) \leq -\lambda |\rho|(\RR^d) \|x-y\|^2.
\eeq
\end{lemma}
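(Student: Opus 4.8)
The plan is to reduce the estimate directly to the integrated form of the $\lambda$-convexity inequality \eqref{lambdaconvWchapo}. I would work with the representation \eqref{achaposym} of the velocity, namely $\achapo_\rho(t,x) = -\int_{\RR^d} \nabWchapo(x-y)\,\rho(t,dy)$; this is legitimate because $|\nabWchapo|\le w_\infty$ by \eqref{borngradW} and $\rho(t,\cdot)$ has finite mass, so the integrand is bounded and Borel measurable and the integral converges, and this representation coincides with \eqref{achapo} since the possible atom of $\rho(t,\cdot)$ at $x$ contributes $\nabWchapo(0)=0$.

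Then, fixing $t\ge 0$ and $x,y\in\RR^d$, I would write
$$
\achapo_\rho(t,x)-\achapo_\rho(t,y) = -\int_{\RR^d}\big(\nabWchapo(x-z)-\nabWchapo(y-z)\big)\,\rho(t,dz),
$$
take the scalar product with $x-y$ and bring it inside the integral (legitimate, as this is a finite vector-valued integral and pairing with a fixed vector is linear), obtaining
$$
\big(\achapo_\rho(t,x)-\achapo_\rho(t,y)\big)\cdot(x-y) = -\int_{\RR^d}\big(\nabWchapo(x-z)-\nabWchapo(y-z)\big)\cdot\big((x-z)-(y-z)\big)\,\rho(t,dz).
$$
For each fixed $z$, applying \eqref{lambdaconvWchapo} to the pair $(x-z,\,y-z)$ gives the pointwise bound $\big(\nabWchapo(x-z)-\nabWchapo(y-z)\big)\cdot\big((x-z)-(y-z)\big)\ge \lambda\|x-y\|^2$; since $\rho(t,\cdot)\ge 0$, integrating this inequality yields
$$
\big(\achapo_\rho(t,x)-\achapo_\rho(t,y)\big)\cdot(x-y) \le -\lambda\,\rho(t,\RR^d)\,\|x-y\|^2 = -\lambda\,|\rho|(\RR^d)\,\|x-y\|^2,
$$
the last equality using that $\rho(t,\cdot)$ is nonnegative. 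This is exactly \eqref{achapoOSL}, and the estimate is uniform in $t$ thanks to $|\rho(t,\cdot)|(\RR^d)\le c$.

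There is essentially no serious obstacle in this argument; the only points deserving a word of care are (i) the passage from \eqref{achapo} to the symmetric form \eqref{achaposym}, justified by $\nabWchapo(0)=0$ together with the integrability provided by the uniform bound \eqref{borngradW}, and (ii) observing that $\lambda\le 0$ makes the right-hand side of \eqref{achapoOSL} nonnegative, so the stated inequality is an upper bound of the expected sign. I would also stress that the inequality \eqref{lambdaconvWchapo} invoked here is precisely the one the text derives from assumption {\bf (A1)} together with the evenness of $W$, which is what keeps it valid even when $x-z$ or $y-z$ vanishes; this is the place where $\lambda$-convexity of $W$ (rather than mere convexity) enters the one-sided estimate.
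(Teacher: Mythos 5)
Your proof is correct and follows exactly the paper's argument: write the difference $\achapo_\rho(t,x)-\achapo_\rho(t,y)$ via the symmetric representation \eqref{achaposym}, pair with $x-y=(x-z)-(y-z)$ inside the integral, apply \eqref{lambdaconvWchapo} pointwise in $z$, and use the nonnegativity of $\rho$. The additional remarks on the equivalence of \eqref{achapo} and \eqref{achaposym} and on the sign of $\lambda$ are accurate but not needed beyond what the paper already records.
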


\begin{proof}
This result is an easy consequence of the $\lambda$-convexity of the potential.
In fact, by definition \eqref{achaposym}, we have
	$$
\achapo_\rho(x)-\achapo_\rho(y) = -\int_{\RR^d} \big(\nabWchapo(x-z)
-\nabWchapo(y-z)\big)\rho(dz).
	$$
Using inequality \eqref{lambdaconvWchapo} and the nonnegativity of $\rho$, we readily obtain \eqref{achapoOSL}.
\end{proof}

From Lemma \ref{aOSL}, we deduce that
if $\rho\in C([0,T],\calP_2(\RR))$ and $\achapo_\rho$ is defined as
in \eqref{achapo}, we can define the Filippov characteristic flow,
denoted $\Xchapo$, associated to the velocity field $\achapo_\rho$
(see \cite{Filippov}). Then we consider the push-forward measure
$$
\rho_{PR} := \Xchapo_\# \rho^{ini}.
$$
Poupaud \& Rascle \cite{PoupaudRascle} have shown that this measure is
the unique measure solution of the conservative linear transport equation
$$
\pa_t \rho_{PR} + \dv (\achapo_\rho \rho_{PR})=0.
$$
The difficulty here is that the measure $\rho$ used in the definition of
the macroscopic velocity $\achapo_\rho$ is a priori not the same as $\rho_{PR}$. Actually,
the whole aim of the next subsection is to prove that they are equal.

\subsection{Existence}\label{sec:exist}

In this subsection, we prove the existence part of Theorem \ref{Exist}. We follow the idea of atomization consisting in approximating the solution by a finite sum of Dirac masses or particles, and then passing to the limit. This approach has been very successful for the aggregation equation, see \cite{Bertozzi1,Carrillo,GF_dual,bonaschi} for instance.

\medskip
{\bf Approximation with Dirac masses}.
Let us assume that the initial density is given by
$\rho^{ini,N}(x) = \sum_{i=1}^N m_i \delta(x-x_i^0)$, with $x_i^0\neq x_j^0$ for $i\neq j$, for
a finite integer $N$ and belongs to $\calP_2(\RR^d)$, i.e. we have
\beq\label{hyprhoini}
\sum_{i=1}^N m_i = 1, \qquad M_2(0):=\sum_{i=1}^N m_i |x_i^0|^2  <+\infty.
\eeq
Then we look for a solution of the aggregation equation given by
$$
\rho^N(t,x) = \sum_{i=1}^N m_i \delta(x-x_i(t)).
$$
By definition \eqref{achapo} we have
$$
\achapo_{\rho^N} (t,x)= \left\{
\begin{array}{ll}
\ds - \sum_{j=1}^N m_j \nabla W (x-x_j(t))\ , \qquad
& \ds \mbox{ if } x\neq x_i, i=1,\ldots,N, \\[4mm]
\ds - \sum_{j\neq i} m_j \nabla W(x_i(t)-x_j(t))\ , & \mbox{ otherwise.}
\end{array}\right.
$$
For such a macroscopic velocity, we can define the Filippov characteristic
$\Xchapo^N$ as in Definition \ref{DefFlow}.
In fact, from Lemma \ref{aOSL}, $\achapo_{\rho^N}$ satisfies the OSL condition,
which allows to define uniquely the Filippov characteristic.
It is obvious from the essential convex hull definition that
$$
- \sum_{j\neq i} m_j \nabla W(x_i(t)-x_j(t)) \in \{\mbox{Convess}
(\achapo_{\rho^N})(t,\cdot)\} (x_i(t)).
$$
Then setting the classical ODE system $x'_i(t) = -\sum_{j\neq i} m_j \nabla W(x_i(t)-x_j(t))$, 
the solution will be defined up to the time $t_c$ of the first collision between two or more particles. 
By uniqueness of the Filippov characteristic, $\Xchapo^N(t,x_i^0)=x_i(t)$ until that time. At time $t_c$, 
one has to recompute the velocity field, since the colliding particles will stick together for later times according to the rule given by
$$
\frac{\pa}{\pa t} \Xchapo^N(t;s,x) \in \{ {\mbox Convess} (\achapo_{\rho^N})(t,\cdot)\}
(\Xchapo^N(t;s,x)) \quad \mbox{a.e. } t\in [0,T]; \qquad
\Xchapo^N(s;s,x) = x.
$$
This construction of the characteristics coincides with the one done in \cite[Remark 2.10]{Carrillo}. 
In other words, the Filippov flow coincides with this time evolution+collision+gluing of particles procedure.

Next, we define $\rho_{PR}^N = \Xchapo^N_{\ \#} \rho^{ini,N}$.
By construction, this measure satisfies in the sense of distributions
$$
\pa_t \rho_{PR}^N + \dv \big(\achapo_{\rho^N} \rho_{PR}^N\big) = 0.
$$
Moreover, from the definition of the pushforward measure, we can write
$$
\achapo_{\rho_{PR}^N} = -\int_{\RR^d} \nabWchapo(x-y) \rho_{PR}^N(dy)
= - \int_{\RR^d} \nabWchapo(x-\Xchapo^N(t,y)) \rho^{ini,N}(dy).
$$
By definition of $\rho^{ini,N}$, we deduce
$$
\begin{array}{ll}
\ds \achapo_{\rho_{PR}^N}(t,x) & \ds = -\sum_{i=1}^N m_i \int_{\RR^d}
\nabWchapo(x-\Xchapo^N(t,y)) \delta(y-x_i^0) \\[2mm]
&\ds = - \sum_{i=1}^N m_i \nabWchapo(x-\Xchapo^N(t,x_i^0))
= \achapo_{\rho^N}(t,x).
\end{array}
$$
Thus we conclude that $\rho_{PR}^N=\rho^N$.

Let us consider now the bound on the second moment.
We define $M_2^N(t):=\sum_{i=1}^N m_i |x_i(t)|^2$. Differentiating, we have
$$
\frac{d}{dt} M_2^N(t) = 2\sum_{i=1}^N \sum_{j=1}^N m_i m_j x_i \nabWchapo(x_i-x_j).
$$
Using \eqref{borngradW}, we deduce that
$$
\frac{d}{dt} M_2^N(t) \leq 2C \sum_{i=1}^N \sum_{j=1}^N m_i m_j |x_i|.
$$
From the Cauchy-Schwarz inequality and the fact that $\sum_i m_i=1$,
we deduce
\beq\label{boundM2N}
\frac{d}{dt} M_2^N(t) \leq K(1+M_2^N(t)).
\eeq
Since $M_2^N(0)$ is finite from \eqref{hyprhoini},
we deduce from a Gronwall Lemma that for all $t\in [0,T]$
we have $M_2^N(t)<+\infty$.
By continuity of the Filippov flow, we have that
$\rho^N\in C([0,T],\calP_2(\RR^d))$.
Moreover, using \eqref{borngradW}, we deduce that
\beq\label{boundachapo}
|\achapo_{\rho^N}(t,x)|\leq C.
\eeq

\medskip
{\bf Passing to the limit $N\to +\infty$}.
Let us assume that $\rho^{ini}\in \calP_2(\RR^d)$ and consider an
approximation $\rho^{ini,N}\in \calP_2(\RR^d)$ given by a finite sum of Dirac masses such
that $\rho^{ini,N} \rightharpoonup \rho^{ini}$ weakly in the sense of measures in $\calM_b(\RR)$ as $N\to +\infty$ 
with a uniform in $N$ bound of the second moments, or equivalently, $d_W(\rho^{ini,N},\rho^{ini})\to 0$ as $N\to\infty$.
We have proved above that we can construct a Filippov flow $\Xchapo^N$
and a measure
$\rho^N = \Xchapo^N\,_{\#} \rho^{ini,N}\in C([0,T],\calP_2(\RR^d))$
such that in the distributional sense
	$$
\pa_t \rho^N + \dv (\achapo_{\rho^N} \rho^N) = 0,
	$$
where $\achapo_{\rho^N}$ is defined by \eqref{achapo}.
From \eqref{boundachapo}, we have that
$\achapo_{\rho^N}$ is bounded in $L^\infty([0,T]\times\RR^d)$.
Thus $\achapo_{\rho^N}$ converges up to a subsequence towards $b$
in $L^\infty_{t,x}-weak*$.  We can pass to the limit in the distributional sense in
the one-sided Lipschitz inequality \eqref{achapoOSL} satisfied by
$\achapo_{\rho^N}$, since the right hand side of this inequality
does not depend on $N$.
Then $b$ satisfies the OSL condition and we can
define $X_b$ the Filippov flow corresponding to $b$.
From the $L^\infty_{t,x}-weak*$ convergence above, it is obvious that $\achapo_{\rho^N}$
converges weakly to $b$ in $L^1([0,T];L^1_{loc}(\RR^d))$. Therefore, we can apply Theorem \ref{th:bianchini},
and deduce that $\Xchapo^N \to X_b$ locally in $C([0,T]\times \RR^d)$ as $N\to +\infty$.

Moreover, for every $\phi \in C_0(\RR^d)$, we have
	$$
\int_{\RR^d} \phi(x) \rho^N(t,dx)
= \int_{\RR^d} \phi(\Xchapo^N(t,x)) \rho^{ini,N}(dx).
	$$
Since $\rho^{ini,N} \rightharpoonup \rho^{ini}$ weakly in the sense of measures and
$\Xchapo^N(t,x) \to X_b(t,x)$ locally in $C([0,T]\times\RR^d)$,
we deduce that for any $R>0$,
	$$
\lim_{N\to +\infty} \int_{B(0,R)}  \phi\big(\Xchapo^N(t,x)\big) \,\rho^{ini,N}(dx)
= \int_{B(0,R)}  \phi(X_b(t,x)) \,\rho^{ini}(dx).
	$$
Denoting as above $M_2^N (0)$ (resp. $M_2(0)$) the second order moment of $\rho^{ini,N}$  (resp. $\rho^{ini}$), we infer that
$$
\int_{\RR^d\setminus B(0,R)} \rho^{ini,N}(dx) \leq \frac{M_2^N(0)}{R^2}\leq \frac{d_W(\rho^{ini,N},\rho^{ini})+M_2(0)}{R^2}.
$$
This implies that for all $\phi\in C_0(\RR^d)$,
$$
\int_{\RR^d} \phi(\Xchapo^N(t,x)) \rho^{ini,N}(dx) \underset{N \to +\infty}{\longrightarrow}
\int_{\RR^d} \phi(X_b(t,x)) \rho^{ini}(dx) = \int_{\RR^d} \phi(x)\, X_b\,_\#\rho^{ini}(dx).
$$
We deduce that $\rho^N \rightharpoonup \rho:=X_b\,_\# \rho^{ini}$
in $\smes$ as $N\to +\infty$.
Finally, from this latter convergence, we deduce by applying Lemma \ref{lemA}
that $\achapo_{\rho^N} \to \achapo_{\rho}$ a.e.
By uniqueness of the limit, we conclude that $b=\achapo_{\rho}$ a.e.

\medskip
{\bf Bound on the second moment}.
Finally, we recover the bound in $\calP_2(\RR^d)$. We first notice that due to the approximation of the initial 
data done in the previous step, we know that $M_2^N(0)$ is bounded uniformly in $N$.
Taking into account this fact together with \eqref{boundM2N}, there exists a nonnegative constant $C_T$
depending only on $T$ and the initial data $\rho^{ini}$ such that
	$$
M_2^N(t)=\int_{\RR^d} |\Xchapo^N(t,x)|^2\,\rho^{ini,N}(dx) \leq C_T\,.
	$$
Then $|x|^2 \rho^N(t)$ is a bounded sequence of nonnegative measures that converges weakly as measures to $|x|^2\rho(t)$. 
Therefore, by the Banach-Alaoglu theorem, we get
	$$
M_2(t)=\int_{\RR^d} |X_b(t,x)|^2\,\rho^{ini}(dx) \leq \liminf_{N\to \infty} M_2^N(t) \leq C_T\,.
	$$
This ends the proof of existence.

\subsection{Uniqueness}
\label{sec:uniq}

The proof of the uniqueness relies on a contraction property with respect
to the Wasserstein distance $d_W$. In the framework of general gradient flows,
this property has been established using the $\lambda$-geodesically convexity of the energy
in \cite[Theorem 11.1.4]{Ambrosio}, see also \cite{CCV,Loeper,CR,Carrillo} for
related results. We show here an equivalent result for our notion of solution.
The proof relies strongly on the definition of the solution as a
pushforward measure associated to a flow and on the $\lambda$-convexity
of $W$.

\begin{proposition}\label{prop:contraction}
Let us assume that $W$ satisfies assumptions {\bf (A0) -- (A2)}.
Let $\rho_0$ and $\tilderho_0$ be two nonnegative measure in
$\calP_2(\RR^d)$.
Let $\rho$ and $\tilderho$ in $C([0,T],\calP_2(\RR^d))$ be solutions of
the aggregation equation as in Theorem \ref{Exist}
with initial data $\rho_0$ and $\widetilde{\rho_0}$ respectively.
Then for all $t>0$,
$$
d_W(\rho(t),\tilderho(t)) \leq e^{-2\lambda t} d_W(\rho_0,\tilderho_0)\,.
$$
Moreover if $\rho_0$ and $\widetilde{\rho_0}$ have the same center of mass, then for all $t>0$,
$$
d_W(\rho(t),\tilderho(t)) \leq e^{-\lambda t} d_W(\rho_0,\tilderho_0)\,.
$$
\end{proposition}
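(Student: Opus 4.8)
The plan is to represent both solutions as pushforwards of a single fixed coupling of the initial data and to run a Gronwall argument on the induced transport cost. Let $\Xchapo$ and $\tildeX$ be the Filippov flows generated by $\achapo_\rho$ and $\achapo_{\tilderho}$ respectively; these exist and are unique by Lemma~\ref{aOSL} and Filippov's theorem, and by Theorem~\ref{Exist} one has $\rho(t)=\Xchapo(t)_\#\rho_0$ and $\tilderho(t)=\tildeX(t)_\#\tilderho_0$. Fix an optimal plan $\gamma_0\in\Gamma_0(\rho_0,\tilderho_0)$ and set
$$
Q(t):=\iint \big|\Xchapo(t,x)-\tildeX(t,y)\big|^2\,\gamma_0(dx,dy).
$$
Since $\big(\Xchapo(t),\tildeX(t)\big)_\#\gamma_0\in\Gamma(\rho(t),\tilderho(t))$ we get $d_W^2(\rho(t),\tilderho(t))\le Q(t)$, while $Q(0)=d_W^2(\rho_0,\tilderho_0)$, so it suffices to estimate $Q$. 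The bound $|\nabla W|\le w_\infty$ makes the flows Lipschitz in time, hence $Q$ is locally Lipschitz, and for a.e.\ $t$ one differentiates under the integral sign to obtain
$$
Q'(t)=2\iint \big(\Xchapo(t,x)-\tildeX(t,y)\big)\cdot\big(\achapo_\rho(t,\Xchapo(t,x))-\achapo_{\tilderho}(t,\tildeX(t,y))\big)\,\gamma_0(dx,dy),
$$
using that along a Filippov flow $\partial_t\Xchapo(t,x)=\achapo_\rho(t,\Xchapo(t,x))$ for a.e.\ $t$ and $\rho_0$-a.e.\ $x$, and similarly for $\tildeX$.

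The heart of the matter is a symmetrization. Using $\rho(t)=\Xchapo(t)_\#\rho_0$, $\tilderho(t)=\tildeX(t)_\#\tilderho_0$ and that $\rho_0,\tilderho_0$ are the marginals of $\gamma_0$, rewrite the velocities as double integrals against $\gamma_0$, e.g.
$$
\achapo_\rho(t,\Xchapo(t,x))=-\iint \nabWchapo\big(\Xchapo(t,x)-\Xchapo(t,x')\big)\,\gamma_0(dx',dy'),
$$
and likewise for $\achapo_{\tilderho}(t,\tildeX(t,y))$ with $\tildeX$. Inserting this into $Q'(t)$, relabelling $(x,y)\leftrightarrow(x',y')$ in half of the resulting quadruple integral, and using that $\nabWchapo$ is odd, one gets, with $u:=\Xchapo(t,x)-\Xchapo(t,x')$, $v:=\tildeX(t,y)-\tildeX(t,y')$ (so that $u-v=(\Xchapo(t,x)-\tildeX(t,y))-(\Xchapo(t,x')-\tildeX(t,y'))$),
$$
Q'(t)=-\iiiint \big(\nabWchapo(u)-\nabWchapo(v)\big)\cdot(u-v)\,\gamma_0(dx,dy)\,\gamma_0(dx',dy').
$$
The $\lambda$-convexity inequality~\eqref{lambdaconvWchapo} gives $(\nabWchapo(u)-\nabWchapo(v))\cdot(u-v)\ge\lambda|u-v|^2$, and since $\lambda\le 0$ this yields
$$
Q'(t)\le-\lambda\iiiint \Big|\big(\Xchapo(t,x)-\tildeX(t,y)\big)-\big(\Xchapo(t,x')-\tildeX(t,y')\big)\Big|^2\,\gamma_0(dx,dy)\,\gamma_0(dx',dy').
$$

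It remains to bound the last integral. Writing $V(x,y):=\Xchapo(t,x)-\tildeX(t,y)$, the triangle inequality gives $\iiiint|V(x,y)-V(x',y')|^2\le 2\iiiint(|V(x,y)|^2+|V(x',y')|^2)=4Q(t)$, hence $Q'(t)\le-4\lambda Q(t)$; Gronwall and $Q(0)=d_W^2(\rho_0,\tilderho_0)$ then give $d_W(\rho(t),\tilderho(t))\le\sqrt{Q(t)}\le e^{-2\lambda t}d_W(\rho_0,\tilderho_0)$. For the sharper estimate, expand the square: with $m(t):=\iint V\,\gamma_0$ one has $\iiiint|V(x,y)-V(x',y')|^2=2Q(t)-2|m(t)|^2$, and $m(t)$ is constant since $\frac{d}{dt}\iint\Xchapo(t,x)\,\gamma_0=\int\achapo_\rho(t,z)\rho(t,dz)=-\iint\nabWchapo(z-z')\rho(t,dz)\rho(t,dz')=0$ by oddness of $\nabWchapo$, and likewise for $\tildeX$. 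If $\rho_0$ and $\tilderho_0$ share the same center of mass then $m(0)=0$, hence $m(t)\equiv 0$, the integral equals $2Q(t)$, $Q'(t)\le-2\lambda Q(t)$, and Gronwall gives $d_W(\rho(t),\tilderho(t))\le e^{-\lambda t}d_W(\rho_0,\tilderho_0)$.

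I expect the main obstacle to be the identity $\partial_t\Xchapo(t,x)=\achapo_\rho(t,\Xchapo(t,x))$ invoked above: a priori the Filippov flow only satisfies $\partial_t\Xchapo(t,x)\in\{\mathrm{Convess}(\achapo_\rho)(t,\cdot)\}(\Xchapo(t,x))$, which need not coincide with $\achapo_\rho(t,\Xchapo(t,x))$ at times after blow-up when $\rho(t)$ carries atoms, and this exact identity is what makes both the pushback formula and the center-of-mass computation legitimate. It holds thanks to the sticky-particle structure of the flow described in Section~\ref{sec:exist} (colliding mass at an atom moves with exactly the mutual-interaction velocity, the self-interaction term $\nabWchapo(0)$ vanishing). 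A clean way to sidestep the issue is to prove the two contraction estimates first for atomic initial data, where between collisions the flow solves an ODE and all the manipulations above are finite sums, and then pass to the limit using the stability of the approximation of Section~\ref{sec:exist} together with the lower semicontinuity of $d_W$ under tight convergence with uniformly bounded second moments.
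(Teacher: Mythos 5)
Your core computation is exactly the paper's: the same functional $\calI(t)=\iint|X(t,x_1)-\tildeX(t,x_2)|^2\gamma_0(dx_1,dx_2)$ built on a fixed optimal plan, the same symmetrization of the quadruple integral using the oddness of $\nabWchapo$, the same application of the $\lambda$-convexity inequality \eqref{lambdaconvWchapo}, and the same Young-inequality versus center-of-mass dichotomy producing the factors $e^{-2\lambda t}$ and $e^{-\lambda t}$. Where you differ is in how the formal differentiation of $\calI$ is legitimized, and you correctly identified that this is the only delicate point: the Filippov flow a priori satisfies only a differential inclusion, so $\partial_t X(t,x)=\achapo_\rho(t,X(t,x))$ is not free when $\rho(t)$ has atoms. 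The paper resolves this by regularizing the \emph{potential}: it introduces smooth even $\lambda$-convex $W_\eps$ with $|\nabla W_\eps|\le|\nabla W|$, runs the Gronwall argument on the flows $X_\eps,\tildeX_\eps$ (for which the ODE holds), and then sends $\eps\to 0$ using Lemma \ref{lemB} to identify the limiting velocity and Theorem \ref{th:bianchini} to get locally uniform convergence of the flows, so that $\calI_\eps\to\calI$. Your fallback regularizes the \emph{data} instead: for atomic $\rho_0^N,\tilderho_0^N$ everything is a finite ODE system with the sticky collision rule, the estimate is elementary on each inter-collision interval, and one passes to the limit via $\rho^N(t)\rightharpoonup\rho(t)$ (which again rests on the Bianchini--Gloyer stability from the existence proof) together with lower semicontinuity of $d_W$. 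Both routes need the same stability theorem; yours trades the construction of the approximating potentials (Moreau--Yosida) for the bookkeeping of the collision-and-gluing dynamics, and is equally valid. One small caveat common to both arguments: the limit passage establishes the contraction for the solutions \emph{obtained as limits of the chosen approximations}, and it is only after the uniqueness conclusion that this covers every solution in the sense of Theorem \ref{Exist}; the paper glosses over this in the same way, so it is not a gap specific to your write-up.
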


\begin{proof}
Let $\rho_0$ and $\tilderho_0$ be two nonnegative measure in $\calP_2(\RR^d)$.
We first choose an optimal plan $\gamma_0\in \Gamma_0(\rho_0,\tilderho_0)$
such that we have
	$$
d_W^2(\rho_0,\tilderho_0)=\iint_{\RR^d\times\RR^d} |x_1-x_2|^2 \,\gamma_0(dx_1,dx_2).
	$$
We regularize the potential $W$, as in Lemma \ref{lemstab_a}, by $W_\eps \in C^1(\RR^d)$ such that
$W_\eps$ is $\lambda$-convex, $W_\eps(-x)=W_\eps(x)$,
$|\nabla W_\eps|\leq |\nabla W|$ and
	$$
\sup_{x\in \RR^d\setminus B(0,\eps)} \big|\nabla W_\eps(x)- \nabla W(x)| \leq \eps.
	$$

As in subsection \ref{sec:exist}, we construct a Filippov flow $X_\eps$
associated to the velocity field $a_\eps :=-\int_{\RR^d} \nabla W_\eps(x-y) \rho_\eps(t,dy)$
such that $\rho_\eps=X_\eps\,_\# \rho_0 \,\in C([0,T],\calP_2(\RR^d))$ is a measure solution to the aggregation
equation
$\pa_t\rho_\eps + \dv(a_\eps\rho_\eps)=0$
with initial data $\rho_0$.
For this flow we have
	$$
\frac{d}{dt} X_\eps (t,x) = -\int_{\RR^d} \nabla W_\eps(x-y) \rho_\eps(t,dy);
\qquad X_\eps(0,x)=x.
	$$
Similarly we construct $\tilderho_\eps = \tildeX_\eps\,_\#\tilderho_0
\in C([0,T],\calP_2(\RR^d))$ associated to the velocity field
$\tildea_\eps :=-\int_{\RR^d} \nabla W_\eps(x-y) \tilderho_\eps(t,dy)$.

By definition of the pushforward measure, we have that
$$
a_{\eps}(t,x) = -\int_{\RR^d} \nabla W_\eps(x-X_\eps(t,y)) \rho_0(dy),\quad
\tildea_\eps(t,x) = -\int_{\RR^d} \nabla W_\eps(x-\widetilde{X}_\eps(t,y)) \tilderho_0(dy).
$$
Moreover, from the definition of the optimal plan $\gamma_0$ we can rewrite
\begin{eqnarray}
\label{aeps1}
&  \ds a_\eps(t,x) = -\iint_{\RR^d\times\RR^d} \nabla W_\eps(x-X_\eps(t,y_1)) \,\gamma_0(dy_1,dy_2),  \\[3mm]
\label{aeps2}
&  \ds \tildea_{\eps}(t,x) = -\iint_{\RR^d\times\RR^d} \nabla W_\eps(x-\tildeX_\eps(t,y_2)) \,\gamma_0(dy_1,dy_2).
\end{eqnarray}
Since $\rho_\eps(t)$ belongs to $\calP_2(\RR^d)$, we have that
$$
\int_{\RR^d} |x|^2 \rho_\eps(t,dx) = \int_{\RR^d} |X_\eps(t,x)|^2 d\rho_0(x) <\infty.
$$
The same estimate holds true for $\tilderho_\eps$.
Then we can consider the quantity
	$$
\calI_\eps(t) = \iint_{\RR^d\times\RR^d} \big| X_\eps(t,x_1)-\tildeX_\eps(t,x_2) \big|^2 \,\gamma_0(dx_1,dx_2).
	$$
We notice that for $t=0$, we have $\calI(0)=d_W^2(\rho_0,\tilderho_0)$.
We have
	$$
\frac{d}{dt} \calI_\eps = 2\iint_{\RR^d\times\RR^d} \big(a_\eps(t,X_\eps(t,x_1)) -
\tildea_{\eps}(t,\tildeX_\eps(t,x_2))\big) \cdot(X_\eps(t,x_1) -\tildeX_\eps(t,x_2)) \,\gamma_0(dx_1,dx_2).
	$$

From the definition of the velocity field \eqref{aeps1}--\eqref{aeps2}, we have
	$$\begin{array}{ll}
\ds \frac{d}{dt} \calI_\eps = - 2\iiiint_{(\RR^d)^4} & \ds
 \big(\nabla W_\eps(X_\eps(t,x_1)-X_\eps(t,y_1))-\nabla W_\eps (\tildeX_\eps(t,x_2)-\tildeX_\eps(t,y_2)) \big)\cdot \\[2mm]
& \ds (X_\eps(t,x_1)-\tildeX_\eps(t,x_2)) \,\gamma_0(dx_1,dx_2) \gamma_0(dy_1,dy_2).
	\end{array}$$
From assumption $W_\eps(-x)=W_\eps(x)$, we deduce that $\nabla W_\eps$ is odd.
Then $\nabla W_\eps(X_\eps(t,x)-X_\eps(t,y))=-\nabla W_\eps(X_\eps(t,y)-X_\eps(t,x))$ for all $x$, $y$.
By exchanging the role of $(x_1,x_2)$ and $(y_1,y_2)$ in this latter
equality and using the symmetry of $\nabla W_\eps$ we deduce that
	$$\begin{array}{ll}
\ds \frac{d}{dt} \calI_\eps = 2\iiiint_{(\RR^d)^4}  & \ds
\big(\nabla W_\eps(X_\eps(t,x_1)-X_\eps(t,y_1)) -\nabla W_\eps(\tildeX_\eps(t,x_2)-\tildeX_\eps(t,y_2)) \big)\cdot  \\[2mm]
&\ds (X_\eps(t,y_1)-\tildeX_\eps(t,y_2)) \,\gamma_0(dx_1,dx_2) \gamma_0(dy_1,dy_2).
	\end{array}$$
Summing these two latter equalities, we obtain
	$$\begin{array}{ll}
\ds \frac{d}{dt} \calI_\eps = -\iiiint_{(\RR^d)^4}  & \ds
\big(\nabla W_\eps(X_\eps(t,x_1)-X_\eps(t,y_1)) -\nabla W_\eps(\tildeX_\eps(t,x_2)-\tildeX_\eps(t,y_2)) \big)\cdot  \\[2mm]
&\ds (X_\eps(t,x_1)-X_\eps(t,y_1)-\tildeX_\eps(t,x_2)+\tildeX_\eps(t,y_2)) \,\gamma_0(dx_1,dx_2) \gamma_0(dy_1,dy_2).
	\end{array}$$
From the $\lambda$-convexity of $W$, we deduce from \eqref{lambdaconv} that
\begin{equation}\label{tech1}
\frac{d}{dt} \calI_\eps \leq -\lambda \iiiint_{(\RR^d)^4}
\big|X_\eps(t,x_1)-X_\eps(t,y_1)-\tildeX_\eps(t,x_2)+\tildeX_\eps(t,y_2)\big|^2 \,\gamma_0(dx_1,dx_2) \gamma_0(dy_1,dy_2).
\end{equation}
We recall that $\lambda\leq 0$ and $|\rho_0|(\RR^d)=|\tilderho_0|(\RR^d)=1$.
A direct Young inequality leads to
\beq\label{gronwallI}
\frac{d}{dt} \calI_\eps \leq -4\lambda \calI_\eps.
\eeq
Applying the Gronwall lemma, we deduce that
\beq\label{ineqIeps}
\calI_\eps(t) \leq e^{-4\lambda t} \calI(0) = e^{-4\lambda t} d_W^2(\rho_0,\tilderho_0).
\eeq
If the initial data have the same center of mass, then it is easy to check that the center of mass remains
the same for both solutions for all times, that is, for all $t\geq 0$
\begin{align*}
M_1=&\,\int_{\RR^d} x \,\rho_\eps(t,dx)  = \int_{\RR^d} X_\eps(t,x) \,\rho_0(dx)  = \iint_{(\RR^d)^2} X_\eps(t,x_1) \,\gamma_0(dx_1,dx_2) \\
=&\,\int_{\RR^d} y \,\tilderho_\eps(t,dy)  = \int_{\RR^d} \tildeX_\eps(t,y) \,\tilderho_0(dy)  = \iint_{(\RR^d)^2} \tildeX_\eps(t,x_2) \,\gamma_0(dx_1,dx_2)\,.
\end{align*}
Thus, one can check that
	$$
\iiiint_{(\RR^d)^4}
(X_\eps(t,x_1)-\tildeX_\eps(t,x_2)) \cdot (X_\eps(t,y_1)-\tildeX_\eps(t,y_2)) \,\gamma_0(dx_1,dx_2) \gamma_0(dy_1,dy_2)=0\,.
	$$
Now, expanding the square in \eqref{tech1}, we improve the decay by a factor of 2 in \eqref{gronwallI} getting
	$$
\frac{d}{dt} \calI_\eps \leq -2\lambda \calI_\eps.
	$$

From now on, we stick to the general case to pass to the limit $\eps\to 0$ in \eqref{ineqIeps}.
Since $\rho_\eps(t)$ is bounded in $\calP_2(\RR^d)$ independently on $\eps$,
we deduce from the Prokhorov theorem that
we can extract a subsequence such that $\rho_\eps(t) \rightharpoonup \rho(t)$
tightly. Then applying Lemma \ref{lemB} in the Appendix we deduce that
$a_\eps \to \achapo_\rho$ for a.e. $t\in [0,T]$, $x\in \RR^d$,
where $\achapo_\rho$ is defined in \eqref{achapo}.
Then we have shown that we can construct a Filippov characteristic flow $X$
associated to the velocity field $\achapo_\rho$.
Applying the stability result of \cite{Bianchini}, recalled in Theorem \ref{th:bianchini},
we deduce that $X_\eps \to X$ locally in $C([0,T]\times \RR^d)$. We can proceed analogously for $\tilderho(t)$, and thus, for any $R>0$ we have
$$
\lim_{\eps\to 0} \int_{B(0,R)} |X(t,x)-X_\eps(t,x)|^2 \rho_0(dx) =
\lim_{\eps\to 0} \int_{B(0,R)} |\tildeX(t,x)-\tildeX_\eps(t,x)|^2 \tilderho_0(dx) = 0\,.
$$
We conclude that
\begin{equation}\label{tech2}
\iint_{B(0,R)\times B(0,R)} \left[\big| X_\eps(t,x_1)-\tildeX_\eps(t,x_2) \big|^2 - \big| X(t,x_1)-\tildeX(t,x_2) \big|^2\right] \,\gamma_0(dx_1,dx_2) \longrightarrow 0
\end{equation}
as $\eps\to 0$.

Now, using \eqref{ineqIeps} together with \eqref{tech2}, we deduce
$$
\iint_{B(0,R)\times B(0,R)} \big| X(t,x_1)-\tildeX(t,x_2) \big|^2 \,\gamma_0(dx_1,dx_2) \leq e^{-4\lambda t} d_W^2(\rho_0,\tilderho_0)\,,
$$
for all $R>0$, leading to our final desired estimate
\beq\label{ineqI}
\calI(t):=\iint_{\RR^d\times\RR^d} |X(t,x_1)-\tildeX(t,x_2)|^2\, \gamma_0(dx_1,dx_2)
\leq e^{-4\lambda t} d_W^2(\rho_0,\tilderho_0).
\eeq
Finally, by definition of the Wasserstein distance \eqref{defWp}, we deduce
$d_W^2(\rho,\tilderho)\leq \calI(t)$ and the contraction inequality
\eqref{ineqcontraction} follows directly.
\end{proof}

The uniqueness of solution in Theorem \ref{Exist} is then a trivial consequence
of this contraction property.
In fact, applying Proposition \ref{prop:contraction} for two solutions
$\rho$ and $\tilderho$ with the same initial data $\rho^{ini}$,
we deduce from \eqref{ineqI} that $X=\tildeX$ on $supp (\rho^{ini})$
which implies that $\rho=\tilderho$.

\subsection{Equivalence with gradient flow solutions}

This subsection is devoted to the proof of the equivalence of solution defined
by the Filippov flow with the gradient flow solution as stated in Theorem \ref{equivalence}.
For $\rho^{ini}$ given in $\calP_2(\RR^d)$, we denote
$\rho$ the solution of Theorem \ref{Exist} and
$\rho_{GF}$ the solution of Theorem \ref{GradFlow}.
We have proved above the existence of a Filippov characteristic flow $X$
such that $\rho=X_\#\rho^{ini}$ and $\rho$ satisfies in the sense of distributions
$$
\pa_t \rho + \dv (\achapo_{\rho} \rho) = 0.
$$
From the bound on $\achapo_{\rho}$ in \eqref{boundachapo}, we deduce
since $\rho$ belongs to $C([0,T],\calP_2(\RR^d))$ that
$\achapo_{\rho}$ is bounded in $L^2([0,T],L^2(\rho(t)))$.
Thus using Theorem 8.3.1 of \cite{Ambrosio}, we deduce that
$\rho\in AC^2([0,T],\calP_2(\RR^d))$.
We can conclude that $\rho$ is a gradient flow solution,
see \cite[Sections 8.3 and 8.4]{Ambrosio} and \cite{Carrillo}.
We conclude the proof using the uniqueness of gradient
flow solutions. As a consequence, the solutions constructed in Theorem \ref{Exist}
satisfy the energy identity, for all $0\leq t_0\leq t_1< \infty$,
$$
\int_{t_0}^{t_1} \int_{\RR^d} |\achapo_\rho(t,x)|^2\rho(t,dx) + \calW(\rho(t_1))
=\calW(\rho(t_0)).
$$

\Section{Numerical approximation}
\label{sec:num}

This Section is devoted to the convergence of a numerical scheme for simulating solutions given by Theorem \ref{Exist}. The theory of existence developed in the previous section will allow to prove convergence of standard finite volume schemes, provided the discretized macroscopic velocity is accurately
defined. Before that, we would like to comment on particle schemes.

\subsection{Particle scheme}

The contraction estimate in $d_W$ for solutions leads to a theoretical estimate of the convergence error of the particle scheme used in the first step of the proof of Theorem \ref{Exist}. This was already pointed out in \cite{Carrillo} in the framework of gradient flow solutions and used for qualitative behavior properties. We just remind the main result here for completeness. Let us consider an initial distribution given by a finite sum of $N$ Dirac masses $\rho^{ini,N}=\sum_{i=1}^N m_i\delta(x-x_i^0)$.
We consider the sticky particles dynamics given by
$$
x'_i(t)= - \sum_{j\neq i} m_j \nabla W(x_i(t)-x_j(t)), \qquad
x_i(0)=x_i^0, \qquad i=1,\ldots,N.
$$
These dynamics are well defined provided $x_i(t)\neq x_j(t)$. When two or more particles meet, we stick them and the resulting system follows the same dynamics with one or more particle less. This system of ODEs plus the collision+gluing particle procedure gives the solution $\rho^N(t)=\sum_{i=1}^N m_i\delta(x-x_i(t))$ of Theorem \ref{Exist} at time $t\geq 0$ with initial data $\rho^{ini,N}$ as explained in the first step of its proof.

\begin{corollary}\label{cor2}
Let $\rho^{ini}\in \calP_2(\RR^d)$, we denote $\rho\in C([0,T];\calP_2(\RR^d))$
the corresponding solution in Theorem \ref{Exist} with initial data $\rho^{ini}$.
Let $\rho^{ini,N}$ be given in $\calP_2(\RR^d)$ by $\rho^{ini,N}(x)=\sum_{i=1}^N m_i\delta(x-x_i^0)$
an approximation such that $d_W(\rho^{ini},\rho^{ini,N}) \to 0$ as
$N\to +\infty$. Given $T>0$, then the corresponding solution $\rho^N$ with initial data $\rho^{ini,N}$ defined above verifies
$$
\sup_{t\in [0,T]} d_W(\rho(t),\rho_N(t)) \underset{N \to +\infty}{\longrightarrow} 0.
$$
\end{corollary}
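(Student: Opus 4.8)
The plan is that Corollary~\ref{cor2} is an immediate consequence of the contraction estimate already established. First I would recall that the curve $\rho^N$ produced by the sticky-particle dynamics together with the collision-and-gluing rule is precisely the solution of Theorem~\ref{Exist} with initial datum $\rho^{ini,N}$: this is exactly what was shown in the first step (``Approximation with Dirac masses'') of the proof of Theorem~\ref{Exist}, where the ODE system $x_i'(t)=-\sum_{j\neq i}m_j\nabla W(x_i(t)-x_j(t))$, extended past collisions via the Filippov characteristic $\Xchapo^N$, yields $\rho^N=\Xchapo^N_\#\,\rho^{ini,N}$, which is the unique distributional solution of \eqref{eq:distrib} with that datum. (We may assume the approximating atoms satisfy $x_i^0\neq x_j^0$ for $i\neq j$, and the uniform-in-$N$ bound on the second moments needed to run that construction follows from $d_W(\rho^{ini,N},\rho^{ini})\to 0$.)

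Then I would simply apply the contraction inequality \eqref{ineqcontraction} of Theorem~\ref{Exist} (equivalently Proposition~\ref{prop:contraction}) to the two solutions $\rho$ and $\rho^N$, whose initial data are $\rho^{ini}$ and $\rho^{ini,N}$ respectively. This gives, for every $t\in[0,T]$,
$$
d_W(\rho(t),\rho^N(t))\leq e^{-2\lambda t}\,d_W(\rho^{ini},\rho^{ini,N}).
$$
Since $\lambda\leq 0$ by assumption \textbf{(A1)}, the prefactor is bounded on $[0,T]$ by $e^{-2\lambda T}$, which is independent of $N$, so that
$$
\sup_{t\in[0,T]} d_W(\rho(t),\rho^N(t))\leq e^{-2\lambda T}\,d_W(\rho^{ini},\rho^{ini,N}),
$$
and the right-hand side tends to $0$ as $N\to+\infty$ by hypothesis, which is the claim.

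There is essentially no obstacle here: all the analytic content---existence of the Filippov flow $\Xchapo^N$, the push-forward representation, and in particular the $\lambda$-convexity argument yielding the Wasserstein contraction---has already been carried out in Section~\ref{sec:proof}. The only points to check are bookkeeping ones, namely that $\rho^{ini,N}\in\calP_2(\RR^d)$ (immediate, being a probability measure with finite second moment) and the identification of the sticky-particle curve with the solution of Theorem~\ref{Exist}, which is precisely the equality $\rho_{PR}^N=\rho^N$ proved there. If a sharper (still $N$-uniform) constant is desired, one can invoke the improved exponent $-\lambda t$ from the Remark following Theorem~\ref{Exist} whenever the approximating atoms are chosen to share the center of mass of $\rho^{ini}$.
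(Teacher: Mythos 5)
Your proof is correct and is exactly the paper's argument: the corollary is stated there to be ``a direct consequence of the stability property in Theorem \ref{Exist}'', i.e.\ the contraction estimate \eqref{ineqcontraction} applied to the two solutions with initial data $\rho^{ini}$ and $\rho^{ini,N}$, together with the uniform bound $e^{-2\lambda t}\leq e^{-2\lambda T}$ on $[0,T]$. The identification of the sticky-particle curve with the solution of Theorem \ref{Exist} that you recall is likewise the content of the first step of the existence proof, so nothing further is needed.
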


The previous corollary is a direct consequence of the stability property in Theorem \ref{Exist}. Although this result is very nice from the theoretical viewpoint, it is not that useful for simulating the evolution of
equation \eqref{EqInter} for fully attractive potentials in practice. The reason is twofold. On one hand, to get a good control on the error after a long time one needs a very large number of particles. On the other hand, the treatment of the collision between particles and the gluing procedure is not too difficult in one dimension but it is very cumbersome (and difficult to control its error) in more dimensions. Nevertheless, particle simulations lead to a very good understanding of qualitative properties of solutions for attractive-repulsive potentials where collisions do not happen, see \cite{BU,BUKB,BCLR,BBSKU} for instance.
We finally mention the recent result of convergence of smooth particle schemes toward smooth solutions of the aggregation equation before blow-up in \cite{CB}.

\subsection{Finite volume discretization}

In the next three subsections, we will concentrate on the convergence of a finite volume scheme for the solutions constructed in Theorem \ref{Exist} with general measures as initial data. The one dimensional case has been considered in \cite{sinum}. This case is particular since we can
define an antiderivative of the measure solution $\rho$ which is then a BV function solution
of an equation obtained by integrating the aggregation equation. This fact is very much connected to the relation of the one dimensional case with conservation laws as in \cite{BV,NoDEA,bonaschi,GF_dual}.
Then the convergence of the numerical scheme relies on a TVD property. We refer the reader to
\cite{sinum} for more details such as the importance of a {\it good} choice of the macroscopic velocity
which is emphasized with some numerical examples.

We focus in this work to higher dimensions where such techniques
cannot be applied. For the sake of clarity, we restrict ourselves
to the case $d=2$. We consider a cartesian grid $x_i=i\Delta x$
and $y_j=j\Delta y$, for $i\in \ZZ$ and $j\in \ZZ$. We denote by
$C_{ij}$ the cells $C_{ij}=[x_i,x_{i+1})\times[y_j,y_{j+1})$. The
time discretization is given by $t_n=n\Delta t$, $n\in \NN$. As
usual, we denote $\rho_{ij}^n$ an approximation of
$\rho(t_n,x_i,y_j)$. We consider that the potential $W$ is given
and satisfies assumptions {\bf (A0)-(A2)}.

Following the idea in \cite{sinum}, we propose the following
discretization. For a given nonnegative measure $\rho^{ini}\in
\calP_2(\RR^2)$, we define for $i,j\in \ZZ^2$, \beq\label{disrho0}
\rho_{ij}^0=\frac{1}{\Delta x\Delta y}\iint_{C_{ij}}
\rho^{ini}(dx,dy)\geq 0. \eeq Since $\rho^{ini}$ is a probability
measure, the total mass of the system is $\sum_{i,j}
\rho_{ij}^0\Delta x\Delta y=1$. Assuming that an approximating
sequence $(\rho_{ij}^n)_{i,j}$ is known at time $n$, then we
compute the approximation at time $t_{n+1}$ by~:
\beq\label{dis_num}
\begin{array}{ll}
\ds \rho_{ij}^{n+1} = & \ds \rho_{ij}^n - \frac{\Delta t}{\Delta
x}\big({a_x}^n_{i+1/2j} \rho_{i+1/2 j}^n - {a_x}^n_{i-1/2j}
\rho_{i-1/2 j}^n\big) - \frac{\Delta t}{\Delta
y}\big({a_y}^n_{ij+1/2} \rho_{i j+1/2}^n -
{a_y}^n_{ij-1/2} \rho_{i j-1/2}^n\big) \\[2mm]
& \ds + \frac{\Delta t}{2\Delta x} w_\infty
\big(\rho_{i+1j}^n-2\rho_{ij}^n+\rho_{i-1j}^n\big) + \frac{\Delta
t}{2\Delta y} w_\infty
\big(\rho_{ij+1}^n-2\rho_{ij}^n+\rho_{ij-1}^n\big),
\end{array}
\eeq
where $w_\infty$ is defined in \eqref{borngradW}.
We have used the notation
$$
\begin{array}{ll}\ds \rho_{i+1/2j} = \frac{\rho_{ij}+\rho_{i+1j}}{2},
& \ds \rho_{ij+1/2} = \frac{\rho_{ij}+\rho_{ij+1}}{2}, \\[2mm]
\ds {a_x}_{i+1/2j} = \frac{{a_x}_{ij}+{a_x}_{i+1j}}{2}, \qquad
& \ds {a_y}_{ij+1/2} = \frac{{a_y}_{ij}+{a_y}_{ij+1}}{2}.
\end{array}
$$
The macroscopic velocity is defined by
\begin{equation}
\label{def:ai12j} {a_x}_{ij} =  \frac{1}{\Delta x\Delta y}
\sum_{k,\ell} \rho_{k\ell} \,D_xW_{ij}^{k\ell}, \qquad {a_y}_{ij}
= \frac{1}{\Delta x\Delta y} \sum_{k,\ell} \rho_{k\ell}
\,D_yW_{ij}^{k\ell},
\end{equation}
where
$$
\begin{array}{l}
\ds D_xW_{ij}^{k\ell} := \iint_{C_{k\ell}} \Big(\iint_{C_{ij}} \widehat{\pa_xW} \big(x-x',y-y' \big) \,dxdy\Big)dx'dy', \\[5mm]
\ds D_yW_{ij}^{k\ell} := \iint_{C_{k\ell}} \Big(\iint_{C_{ij}} \widehat{\pa_yW} \big(x-x',y-y' \big) \,dxdy\Big)dx'dy'.
\end{array}
$$
We notice after a straightforward change of variable that we have also
\begin{equation}
\label{ai12jbis} {a_x}_{i+1/2j} = \frac{1}{\Delta x\Delta y}
\sum_{k,\ell} \rho_{k+1/2\ell} \,D_xW_{ij}^{k\ell},\qquad
{a_y}_{ij+1/2} = \frac{1}{\Delta x\Delta y} \sum_{k,\ell}
\rho_{k\ell+1/2} \,D_yW_{ij}^{k\ell}.
\end{equation}

Let us finally remark that this scheme is close to the Lax-Friedrichs flux formula for conservation laws. Therefore, it introduces some numerical viscosity in the simulations. This will be clear in the error terms obtained in the convergence proof since we will have error estimates depending on second order derivatives, see subsection 4.4.

\subsection{Properties of the scheme}

The following Lemma states a CFL-like condition for the scheme~:
\begin{lemma}\label{lem:CFL}
Let us assume that $W$ satisfies {\bf (A0)-(A2)}
and consider $\rho^{ini}\in \calP_2(\RR^2)$.
We define $\rho_{ij}^0$ by \eqref{disrho0}.
Let us assume that the condition
\begin{equation}\label{CFL}
w_\infty\Big(\frac{1}{\Delta x}+\frac{1}{\Delta y}\Big)\Delta t
\leq \frac{1}{2},
\end{equation}
is satisfied. Then the sequences computed thanks to the scheme defined in \eqref{dis_num}--\eqref{def:ai12j}
satisfy for all $i$, $j$ and $n$,
$$
\rho_{ij}^n \geq 0, \qquad |{a_x}_{ij}^n|\leq w_\infty, \qquad |{a_y}_{ij}^n|\leq w_\infty.
$$
\end{lemma}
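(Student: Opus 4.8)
The plan is to prove the two assertions by induction on $n$. The base case $n=0$ is immediate: $\rho_{ij}^0\geq 0$ by definition \eqref{disrho0}, and the velocity bounds follow from the representation \eqref{def:ai12j} together with the estimate $|D_xW_{ij}^{k\ell}|\leq w_\infty\,(\Delta x\Delta y)^2\cdot\frac{1}{\Delta x\Delta y}$-type bound coming from \eqref{borngradW}; more precisely, since $|\widehat{\pa_xW}|\leq w_\infty$ pointwise and $\sum_{k,\ell}\rho_{k\ell}\Delta x\Delta y=1$, one gets $|{a_x}_{ij}^0|\leq w_\infty\cdot\frac{1}{\Delta x\Delta y}\sum_{k,\ell}\rho_{k\ell}\,|C_{k\ell}||C_{ij}|\cdot\frac{1}{|C_{ij}|}\leq w_\infty$. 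I would spell out this elementary estimate once, carefully, since it is reused at every step.

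For the induction step, assume $\rho_{ij}^n\geq 0$ for all $i,j$ with total mass $1$, and $|{a_x}_{ij}^n|,|{a_y}_{ij}^n|\leq w_\infty$. The key is to rewrite the update \eqref{dis_num} as a convex combination of the neighbouring values $\rho_{ij}^n$, $\rho_{i\pm1 j}^n$, $\rho_{ij\pm1}^n$. Expanding the half-integer velocities and densities via their averaging definitions and collecting terms, $\rho_{ij}^{n+1}$ becomes $\rho_{ij}^n$ times a coefficient plus each neighbour times a coefficient; the diffusive terms $\frac{\Delta t}{2\Delta x}w_\infty(\rho_{i+1j}^n-2\rho_{ij}^n+\rho_{i-1j}^n)$ and its $y$-counterpart contribute $+\frac{w_\infty\Delta t}{2\Delta x}$ to each horizontal neighbour and $-\frac{w_\infty\Delta t}{\Delta x}$ to the center (similarly in $y$), while the transport terms contribute $\pm\frac{\Delta t}{2\Delta x}{a_x}$-type quantities. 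The crucial observation is that the coefficient of, say, $\rho_{i+1j}^n$ is $\frac{\Delta t}{2\Delta x}(w_\infty - {a_x}^n_{i+1/2\,j})\geq 0$ by the induction hypothesis $|{a_x}^n|\leq w_\infty$, and similarly every neighbour coefficient is nonnegative; the center coefficient is $1 - w_\infty\Delta t(\frac{1}{\Delta x}+\frac{1}{\Delta y})$ up to velocity contributions that cancel in the sum, which is $\geq 0$ precisely under the CFL condition \eqref{CFL}. Since all coefficients are nonnegative, $\rho_{ij}^{n+1}\geq 0$. Conservation of total mass $\sum_{ij}\rho_{ij}^{n+1}\Delta x\Delta y=1$ follows because the scheme is in conservative (flux-difference) form, so summing over $i,j$ telescopes everything to $0$. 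Finally, once $\rho_{ij}^{n+1}\geq 0$ with total mass $1$ is established, the velocity bounds $|{a_x}_{ij}^{n+1}|,|{a_y}_{ij}^{n+1}|\leq w_\infty$ follow by exactly the same elementary estimate as in the base case, using \eqref{borngradW} and $|D_xW|,|D_yW|$ bounded by $w_\infty$ times the product of cell areas.

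The main obstacle is the bookkeeping in the convex-combination rewriting: one must carefully track that the first-order velocity terms attach with the correct sign so that $w_\infty\pm {a_x}$ appears (nonnegative by induction) rather than some uncontrolled combination, and that the center coefficient, after all cancellations, is exactly $1 - w_\infty\Delta t(\frac{1}{\Delta x}+\frac{1}{\Delta y})$. This is where the Lax--Friedrichs-type numerical viscosity with coefficient $w_\infty$ is doing its job — it is chosen exactly large enough to dominate the transport velocities in absolute value. I would present the rewriting explicitly for the horizontal part, note the $y$-part is identical by symmetry, and then conclude nonnegativity and mass conservation in one line each.
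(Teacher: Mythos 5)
Your overall strategy --- rewrite the update as a convex combination of $\rho_{ij}^n$ and its four neighbours, prove nonnegativity by induction, and recover the velocity bounds at each step from nonnegativity plus conservation of the unit mass and the pointwise bound \eqref{borngradW} on $\widehat{\nabla W}$ --- is exactly the paper's argument, and the neighbour coefficients $\frac{\Delta t}{2\Delta x}\bigl(w_\infty\mp {a_x}^n_{i\pm1/2\,j}\bigr)$ are identified correctly. However, your bookkeeping of the centre coefficient is wrong, and in a way that matters for the constant in \eqref{CFL}. Expanding $\rho^n_{i\pm 1/2\,j}=\tfrac12(\rho^n_{ij}+\rho^n_{i\pm1\,j})$, the coefficient of $\rho_{ij}^n$ is
$$
1-\frac{\Delta t}{2\Delta x}\bigl({a_x}^n_{i+1/2j}-{a_x}^n_{i-1/2j}\bigr)
-\frac{\Delta t}{2\Delta y}\bigl({a_y}^n_{ij+1/2}-{a_y}^n_{ij-1/2}\bigr)
-\frac{\Delta t}{\Delta x}w_\infty-\frac{\Delta t}{\Delta y}w_\infty ,
$$
not $1-w_\infty\Delta t(\tfrac1{\Delta x}+\tfrac1{\Delta y})$. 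The discrete velocity--divergence terms do \emph{not} cancel pointwise; they only telescope when you sum over $i,j$ (which is what gives mass conservation). For the pointwise sign of the centre coefficient you must estimate them by the induction hypothesis, $\bigl|{a_x}^n_{i+1/2j}-{a_x}^n_{i-1/2j}\bigr|/2\le w_\infty$, which contributes a second copy of $w_\infty\Delta t(\tfrac1{\Delta x}+\tfrac1{\Delta y})$. The centre coefficient is therefore bounded below by $1-2w_\infty\Delta t(\tfrac1{\Delta x}+\tfrac1{\Delta y})$, and this is precisely why \eqref{CFL} carries the factor $\tfrac12$: with your claimed exact coefficient, the condition $w_\infty\Delta t(\tfrac1{\Delta x}+\tfrac1{\Delta y})\le 1$ would already suffice, which is internally inconsistent with your assertion that nonnegativity holds ``precisely under'' \eqref{CFL}.

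Once that coefficient is corrected, the rest of your plan goes through verbatim and coincides with the paper's proof: nonnegativity of all five coefficients under \eqref{CFL} gives $\rho_{ij}^{n+1}\ge 0$, the flux-difference form gives $\sum_{i,j}\rho_{ij}^{n+1}\Delta x\Delta y=1$, and then $|D_xW_{ij}^{k\ell}|\le w_\infty(\Delta x\Delta y)^2$ together with the unit mass yields $|{a_x}_{ij}^{n+1}|,|{a_y}_{ij}^{n+1}|\le w_\infty$, closing the induction.
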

\begin{proof}
The total initial mass of the system is $\Delta x\Delta
y\sum_{i,j} \rho_{ij}^0=1$. Since the scheme \eqref{dis_num} is
conservative, we have for all $n\in \NN$, $\Delta x\Delta y
\sum_{i,j} \rho_{ij}^n=1$.

We can rewrite equation \eqref{dis_num} as
\begin{align}
\rho_{ij}^{n+1} = & \,\rho_{ij}^n \left[ 1- \frac{\Delta
t}{\Delta x} \left(\frac{{a_x}^n_{i+1/2j}-{a_x}^n_{i-1/2j}}{2}\right)
-\frac{\Delta t}{\Delta y}
\left(\frac{{a_y}^n_{ij+1/2}-{a_y}^n_{ij-1/2}}{2}\right)
-\frac{\Delta t}{\Delta x} w_\infty - \frac{\Delta t}{\Delta y} w_\infty \right]
\nonumber\\[3mm]
& +  \rho_{i+1j}^n \frac{\Delta t}{2\Delta x}\Big(w_\infty -
{a_x}^n_{i+1/2j}\Big)
+ \rho_{i-1j}^n \frac{\Delta t}{2\Delta x}\Big(w_\infty + {a_x}^n_{i-1/2j}\Big)  
\nonumber\\[3mm]
& +  \rho_{ij+1}^n \frac{\Delta t}{2\Delta y}\Big(w_\infty -
{a_y}^n_{ij+1/2}\Big) + \rho_{ij-1}^n \frac{\Delta t}{2\Delta
y}\Big(w_\infty + {a_y}^n_{ij-1/2}\Big). \label{schemarho}
\end{align}

Let us prove by induction on $n$ that for all $i,j,n$ we have $\rho_{ij}^n \geq 0$.
Let us assume that for a given $n\in \NN$ we have $\rho_{ij}^n \geq 0$ for all $i,j$.
Then, from definition \eqref{def:ai12j} and assumption \eqref{borngradW}
we clearly have that
$$
|{a_x}_{ij}^{n}|\leq w_\infty \Delta x\Delta y \sum_{i,j}
\rho_{ij}^n = w_\infty\quad ; \qquad |{a_y}_{ij}^n|\leq w_\infty.
$$
Then assuming that the condition \eqref{CFL} holds, we deduce that in the scheme \eqref{schemarho}
all the coefficients in front of $\rho_{ij}^n$, $\rho_{i-1j}^n$, $\rho_{i+1j}^n$, $\rho_{ij-1}^n$,
and $\rho_{ij+1}^n$ are nonnegative.
Thus, using the induction assumption, we deduce that
$\rho_{ij}^{n+1}\geq 0$ for all $i,j$.
\end{proof}

In the following Lemma, we gather some properties of the scheme: mass conservation,
center of mass conservation and finite second order moment.

\begin{lemma}\label{bounddismom}
Let us assume that $W$ satisfies {\bf (A0)-(A2)}
and consider $\rho_{ij}^0$ defined by \eqref{disrho0} for some
$\rho^{ini}\in \calP_2(\RR^2)$.
Let us assume that \eqref{CFL} is satisfied.
Then the sequence $(\rho_{ij}^n)$ constructed thanks to the numerical scheme
\eqref{dis_num}--\eqref{def:ai12j} satisfies:

$(i)$ Mass conservation and conservation of the center of mass:
for all $n\in \NN^*$, we have
\begin{eqnarray*}
&\ds \sum_{i,j\in \ZZ^2} \rho_{ij}^n \Delta x
\Delta y =
\sum_{i,j\in \ZZ^2} \rho_{ij}^0 \Delta x \Delta y = 1\ , \\
&\ds \sum_{i,j\in \ZZ^2} x_i \rho_{ij}^n = \sum_{i,j\in \ZZ^2} x_i \rho_{ij}^0\ , \qquad
\sum_{i,j\in \ZZ^2} y_j \rho_{ij}^n = \sum_{i,j\in \ZZ^2} y_j \rho_{ij}^0.
\end{eqnarray*}

$(ii)$ Bound on the second moment: there exists a constant $C>0$ such that for all $n\in \NN^*$, we have
\begin{equation}\label{boundmoment2}
M_2^n := \sum_{i,j\in \ZZ^2} (x_i^2+y_j^2)\rho_{ij}^n \Delta x
\Delta y \leq e^{C t_n} \big(M_2^0 + 1\big) -1,
\end{equation}
where we recall that $t_n=n\Delta t$.
\end{lemma}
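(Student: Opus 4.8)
The plan is to test the update formula \eqref{dis_num} against the four ``discrete moments'' $1$, $x_i$, $y_j$ and $x_i^2+y_j^2$, summed over $(i,j)\in\ZZ^2$ with weight $\Delta x\Delta y$, and to exploit three facts: (a) every flux difference and every discrete-viscosity difference in \eqref{dis_num} telescopes when summed against a constant; (b) the interaction weights $D_xW_{ij}^{k\ell}$ and $D_yW_{ij}^{k\ell}$ are antisymmetric under the exchange $(i,j)\leftrightarrow(k,\ell)$, because of the change of variables $(x,y)\leftrightarrow(x',y')$ in their definition together with the oddness of $\widehat{\pa_x W}$ and $\widehat{\pa_y W}$ (which holds since $W$ is even, assumption {\bf (A0)}); and (c) the uniform bound $|{a_x}_{ij}^n|,|{a_y}_{ij}^n|\le w_\infty$ furnished by Lemma \ref{lem:CFL} under the CFL condition \eqref{CFL}. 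Before starting, I note that $M_2^n<+\infty$ for every $n$ by induction: the scheme only transfers mass between a cell and its four neighbours, and shifting mass by one cell increases the second moment by a controlled amount; this finiteness legitimates all the rearrangements of absolutely convergent double series used below (note $|D_xW_{ij}^{k\ell}|,|D_yW_{ij}^{k\ell}|\le w_\infty(\Delta x\Delta y)^2$ and $\sum_{i,j}\rho_{i+1/2j}^n\Delta x\Delta y=1$).

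Mass conservation is the easy case: summing \eqref{dis_num} over $(i,j)$, all flux and viscosity terms telescope to $0$, so $\sum_{i,j}\rho_{ij}^{n+1}\Delta x\Delta y=\sum_{i,j}\rho_{ij}^0\Delta x\Delta y$, the latter equal to $1$ by \eqref{disrho0} and $\rho^{ini}\in\calP_2(\RR^2)$. For conservation of the $x$-component of the center of mass I would multiply \eqref{dis_num} by $x_i\,\Delta x\Delta y$ and sum. After two discrete integrations by parts the two viscosity terms give $\sum_{i,j}(x_{i+1}-2x_i+x_{i-1})\rho_{ij}^n=0$ (since $i\mapsto x_i$ is affine) and a term that telescopes in $j$ (since $x_i$ is $j$-independent), hence both vanish; the $y$-flux difference likewise telescopes in $j$. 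The $x$-flux difference, after an Abel summation in $i$, becomes
$$
-\frac{\Delta t}{\Delta x}\,\Delta x\Delta y\sum_{i,j}x_i\big({a_x}^n_{i+1/2j}\rho_{i+1/2j}^n-{a_x}^n_{i-1/2j}\rho_{i-1/2j}^n\big)=\Delta t\,\Delta x\Delta y\sum_{i,j}{a_x}^n_{i+1/2j}\rho_{i+1/2j}^n,
$$
and inserting \eqref{ai12jbis} together with the antisymmetry of $D_xW_{ij}^{k\ell}$ makes the resulting double sum $\sum_{i,j,k,\ell}\rho_{i+1/2j}^n\rho_{k+1/2\ell}^n D_xW_{ij}^{k\ell}$ vanish, because the product $\rho_{i+1/2j}^n\rho_{k+1/2\ell}^n$ is symmetric under $(i,j)\leftrightarrow(k,\ell)$. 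Therefore $\sum_{i,j}x_i\rho_{ij}^{n+1}=\sum_{i,j}x_i\rho_{ij}^n$, and the $y$-component is treated symmetrically using $D_yW_{ij}^{k\ell}$.

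For the second moment I would multiply \eqref{dis_num} by $(x_i^2+y_j^2)\,\Delta x\Delta y$ and sum. Two discrete integrations by parts turn the $x$-viscosity term paired with $x_i^2$ into $\tfrac{\Delta t}{2\Delta x}w_\infty\Delta x\Delta y\sum_{i,j}(x_{i+1}^2-2x_i^2+x_{i-1}^2)\rho_{ij}^n=\Delta t\,w_\infty\,\Delta x$ (using $x_{i+1}^2-2x_i^2+x_{i-1}^2=2\Delta x^2$ and mass conservation), while paired with $y_j^2$ it telescopes to $0$; symmetrically the $y$-viscosity contributes $\Delta t\,w_\infty\,\Delta y$. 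For the advective part, the $x$-flux paired with $y_j^2$ and the $y$-flux paired with $x_i^2$ telescope to $0$, and an Abel summation of the $x$-flux against $x_i^2$ yields $2\,\Delta t\,\Delta x\Delta y\sum_{i,j}x_{i+1/2}\,{a_x}^n_{i+1/2j}\rho_{i+1/2j}^n$ (and symmetrically for $y$); bounding $|{a_x}^n_{i+1/2j}|\le w_\infty$, using Cauchy--Schwarz, $\sum_{i,j}\rho_{i+1/2j}^n\Delta x\Delta y=1$ and the elementary identity $\sum_i x_{i+1/2}^2\rho_{i+1/2j}^n=\sum_i(x_i^2+\tfrac14\Delta x^2)\rho_{ij}^n$, this is $\le C\,\Delta t\,(1+M_2^n)$ with $C$ depending only on $w_\infty$ and an upper bound on $\Delta x,\Delta y$. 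Collecting all contributions gives the discrete Gronwall inequality
$$
M_2^{n+1}-M_2^n\le C\,\Delta t\,(1+M_2^n),\qquad\text{i.e.}\qquad M_2^{n+1}+1\le(1+C\Delta t)(M_2^n+1),
$$
and iterating yields $M_2^n+1\le(1+C\Delta t)^n(M_2^0+1)\le e^{Ct_n}(M_2^0+1)$, which is \eqref{boundmoment2}.

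The computations are all elementary; the steps I expect to require the most care are the antisymmetry bookkeeping for $D_xW_{ij}^{k\ell}$ and $D_yW_{ij}^{k\ell}$ in the center-of-mass step (getting the change of variables and the oddness to combine correctly, and checking the absolute convergence that legitimates splitting the quadruple sum), and keeping track of the $O(\Delta x)$ and $O(\Delta y)$ remainders generated by the half-index averages $\rho_{i\pm1/2\,j}^n$ and $\rho_{i\,j\pm1/2}^n$ in the second-moment estimate so that they are absorbed cleanly into the constant $C$.
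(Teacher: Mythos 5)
Your proof is correct and follows essentially the same route as the paper's: telescoping for mass, discrete integration by parts plus the antisymmetry $D_xW_{ij}^{k\ell}=-D_xW^{ij}_{k\ell}$ (from the oddness of $\widehat{\pa_xW}$) for the center of mass, and Abel summation with $|{a_x}^n_{i+1/2j}|\le w_\infty$, Cauchy--Schwarz and a discrete Gronwall lemma for the second moment. Your added remarks on the absolute convergence justifying the rearrangement of the quadruple sum and on the a priori finiteness of $M_2^n$ are care points the paper leaves implicit, but they do not change the argument.
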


\begin{proof}
We first notice that due to Lemma \ref{lem:CFL}, we have that for all $n,i,j$ the sequence $(\rho_{ij}^n)$
is nonnegative.

$(i)$ The mass conservation is directly obtained by summing over $i$ and $j$ equation \eqref{dis_num}.
For the center of mass, we have from \eqref{dis_num} after using a discrete integration by parts~:
$$
\begin{array}{ll}
\ds \sum_{i,j\in \ZZ^2} x_i\rho_{ij}^{n+1} = & \ds \sum_{i,j\in
\ZZ^2} x_i\rho_{ij}^n - \frac{\Delta t}{\Delta x} \sum_{i,j\in
\ZZ^2} {a_x}^n_{i+1/2j} \, \rho_{i+1/2 j}^n \big(x_i-x_{i+1}\big)
  \\[5mm]
& \ds + \frac{\Delta t}{2\Delta x} w_\infty  \sum_{i,j\in \ZZ^2}
\rho_{ij}^n \big(x_{i-1}-2x_i+x_{i+1}\big).
\end{array}
$$
From the definition $x_i=i\Delta x$, we deduce
$$
\sum_{i,j\in \ZZ^2} x_i\rho_{ij}^{n+1} = \sum_{i,j\in \ZZ^2}
x_i\rho_{ij}^n - \Delta t \sum_{i,j\in \ZZ^2} {a_x}^n_{i+1/2j} \,
\rho_{i+1/2 j}^n.
$$
By definition of the macroscopic velocity \eqref{ai12jbis}, we have
$$
\sum_{i,j\in \ZZ^2} {a_x}^n_{i+1/2j} \, \rho_{i+1/2 j}^n =
\frac{1}{\Delta x\Delta y} \sum_{i,j} \sum_{k,\ell}
D_xW_{ij}^{k\ell}\, \rho_{k+1/2\ell}^n \, \rho_{i+1/2 j}^n.
$$
Since the function $\pa_xW$ is odd, we deduce that $D_xW_{ij}^{k\ell}=-D_xW^{ij}_{k\ell}$.
Then by exchanging the role of $i,j$ and $k,\ell$ is the latter sum, we deduce that
it vanishes. Thus,
$$
\sum_{i,j\in \ZZ^2} x_i\rho_{ij}^{n+1} = \sum_{i,j\in \ZZ^2} x_i\rho_{ij}^n
$$
and we proceed in the same way with $y_j$ instead of $x_i$.

$(ii)$ For the second moment, still using \eqref{dis_num} and a discrete integration by parts,
we get
$$
\begin{array}{ll}
\ds \sum_{i,j\in \ZZ^2} x_i^2 \rho_{ij}^{n+1} = & \ds \sum_{i,j\in
\ZZ^2} x_i^2\rho_{ij}^n - \frac{\Delta t}{\Delta x} \sum_{i,j\in
\ZZ^2} {a_x}^n_{i+1/2j} \, \rho_{i+1/2 j}^n
\big(x_i^2-x_{i+1}^2\big)
  \\[5mm]
& \ds + \frac{\Delta t}{2\Delta x} w_\infty  \sum_{i,j\in \ZZ^2}
\rho_{ij}^n \big(x_{i-1}^2-2x_i^2+x_{i+1}^2\big).
\end{array}
$$
By definition $x_i=i\Delta x$, we have
$(x_i^2-x_{i+1}^2)=-2x_{i+1/2}\, \Delta x$ and
$(x_{i-1}^2-2x_i^2+x_{i+1}^2)=2 \Delta x^2$. Thus,
$$
\sum_{i,j\in \ZZ^2} x_i^2 \rho_{ij}^{n+1} = \sum_{i,j\in \ZZ^2}
x_i^2 \rho_{ij}^{n} + 2\Delta t \sum_{i,j\in \ZZ^2}
{a_x}^n_{i+1/2j} \, \rho_{i+1/2 j}^n \, x_{i+1/2} + w_\infty
\Delta t \Delta x,
$$
where we have used the conservation of the mass.
From Lemma \ref{lem:CFL}, we deduce that $|{a_x}^n_{i+1/2j}|\leq w_\infty$. Thus, after
applying a Cauchy-Schwarz inequality and using the mass conservation, we get
$$
\Big|\sum_{i,j\in \ZZ^2} {a_x}^n_{i+1/2j} \, \rho_{i+1/2 j}^n \,
x_{i+1/2} \Delta x\Delta y \Big| \leq \frac{w_\infty}{2} \Big( 1 +
\sum_{i,j\in \ZZ^2} x_{i+1/2}^2 \, \rho_{i+1/2 j}^n \Delta x
\Delta y \Big).
$$
We deduce then that there exists a nonnegative constant $C$ such that
$$
\sum_{i,j\in \ZZ^2} x_i^2 \rho_{ij}^{n+1}\Delta x\Delta y \leq
\Big(1+C\Delta t\Big)\sum_{i,j\in \ZZ^2} x_i^2 \rho_{ij}^{n}\Delta
x\Delta y + C\Delta t.
$$
Doing the same with the term $\sum_{i,j\in \ZZ^2} y_j^2
\rho_{ij}^{n+1}$, we deduce that there exists a nonnegative
constant $C$ such that 
$$
M_2^{n+1} \leq \big(1+C\Delta t\big) M_2^n + C\Delta t. 
$$
We conclude the proof using a discrete Gronwall Lemma.
\end{proof}

\subsection{Convergence of the numerical approximation}

Let us denote by $\Delta = \max\{\Delta x,\Delta y\}$. We define the
reconstruction 
\beq\label{rhodelta} 
\rho_\Delta(t,x,y) =
\sum_{n\in\NN} \sum_{i\in \ZZ}\sum_{j\in \ZZ} \rho_{ij}^n {\bf
1}_{[n\Delta t,(n+1)\Delta t)\times C_{ij}}(t,x,y), 
\eeq
Therefore, we have by definition of $a_{ij}^n=({a_x}_{ij}^n,{a_y}_{ij}^n)$ in \eqref{def:ai12j} that
$$
a_{ij}^n = \frac{1}{\Delta x\Delta y} \iint_{C_{ij}} \nabWchapo *
\rho_\Delta(t_n,x,y)\,dxdy.
$$
In the same manner, we define
$$
a_\Delta(t,x,y) = \sum_{n\in\NN} \sum_{i\in\ZZ}\sum_{j\in \ZZ}
a_{ij}^n {\bf 1}_{[n\Delta t,(n+1)\Delta t)\times C_{ij}}(t,x,y).
$$

Then we have the following convergence result:
\begin{theorem}\label{convnum}
Let us assume that $W$ satisfies {\bf (A0)-(A2)} and consider
$\rho^{ini}\in \calP_2(\RR^2)$. We define $\rho_{ij}^0$ by
\eqref{disrho0}. Let $T>0$ be fixed. Then, if (\ref{CFL}) is
satisfied, the discretization $\rho_\Delta$ converges weakly in
$\calM_b([0,T]\times\RR^2)$ towards the solution $\rho$ of
Theorem \ref{Exist} as $\Delta:=\max\{\Delta x,\Delta y\}$ goes to
$0$ with $\Delta t$ satisfying \eqref{CFL}.
\end{theorem}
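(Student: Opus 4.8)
The plan is to combine the a priori estimates of Lemmas~\ref{lem:CFL}--\ref{bounddismom} with a compactness argument and the uniqueness part of Theorem~\ref{Exist}. By Lemma~\ref{lem:CFL}, for every $\Delta$ and $n$ the reconstruction $\rho_\Delta(t_n,\cdot)$ is a nonnegative measure of mass one with $|a_\Delta|\le w_\infty$, and by Lemma~\ref{bounddismom} the second moment $M_2^n$ is bounded uniformly for $t_n\in[0,T]$ and $\Delta$ small under \eqref{CFL}; the latter bound yields tightness of $\{\rho_\Delta(t,\cdot)\}$ in $\calP_2(\RR^2)$, uniformly in $t$ and $\Delta$. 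I would then prove a weak equicontinuity in time: testing \eqref{dis_num} against $\varphi\in C_c^\infty(\RR^2)$ and performing discrete summations by parts, the convective part of $\langle\rho_\Delta(t)-\rho_\Delta(s),\varphi\rangle$ is bounded by $C\,w_\infty\|\nabla\varphi\|_\infty(|t-s|+\Delta t)$ and the Lax--Friedrichs viscosity part, after two summations by parts, by $C\,w_\infty\|D^2\varphi\|_\infty(|t-s|+\Delta t)\,\Delta$. Hence $t\mapsto\langle\rho_\Delta(t),\varphi\rangle$ is equicontinuous up to an $O(\Delta)$ error, and an Ascoli--Arzelà type argument (using the uniform tightness) provides a subsequence, still denoted $\rho_\Delta$, with $\rho_\Delta\to\rho$ in $\smes$, where $\rho(t)\in\calP_2(\RR^2)$ for every $t$ and $\rho(0)=\rho^{ini}$ since $\rho_\Delta(0,\cdot)\rightharpoonup\rho^{ini}$ by \eqref{disrho0}. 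This already implies the weak convergence in $\calM_b([0,T]\times\RR^2)$ asserted in the statement, so it only remains to identify $\rho$.

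To that end I would pass to the limit in the weak form of the scheme. Multiplying \eqref{dis_num}--\eqref{def:ai12j} by $\varphi\in C_c^\infty([0,T)\times\RR^2)$, summing and rearranging, one gets a discrete identity
\[
\iint_{[0,T]\times\RR^2} \rho_\Delta\,\pa_t\varphi + \iint_{[0,T]\times\RR^2} a_\Delta\cdot\nabla\varphi\,\rho_\Delta + \int_{\RR^2}\varphi(0,\cdot)\,\rho_\Delta(0,dx) = \mathcal{E}_\Delta ,
\]
where $\mathcal{E}_\Delta$ gathers the consistency errors coming from the midpoint averages $\rho_{i\pm1/2\,j}$, $a_{i\pm1/2\,j}$, the time quantization, and the numerical viscosity; using the smoothness of $\varphi$ and the uniform bounds on $\rho_\Delta$, $a_\Delta$, one checks $\mathcal{E}_\Delta=O(\Delta)$ (this is where second-order derivatives of $\varphi$ enter, as announced before the statement, and where one uses that the cell-block averages $D_xW_{ij}^{k\ell}/(\Delta x\Delta y)$, $D_yW_{ij}^{k\ell}/(\Delta x\Delta y)$ approximate $\nabWchapo$ well away from the diagonal thanks to {\bf (A2)} and \eqref{borngradW}). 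The first and third terms converge by the previous step, so everything reduces to the nonlinear flux term $\iint a_\Delta\cdot\nabla\varphi\,\rho_\Delta$.

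This flux passage is the main obstacle, since $\rho_\Delta$ converges only weakly while $a_\Delta$ is itself a discretized singular convolution of $\rho_\Delta$, so one cannot multiply two weak limits. I would treat it by symmetrization, exactly as in the proof of Lemma~\ref{lemstab_a}. Because $W$ is even, $\widehat{\pa_x W}$ and $\widehat{\pa_y W}$ are odd, so $D_xW_{ij}^{k\ell}=-D_xW_{k\ell}^{ij}$ (already used in Lemma~\ref{bounddismom}); exchanging $(i,j)$ and $(k,\ell)$ one rewrites, up to an $O(\Delta)$ quadrature error,
\[
\iint_{[0,T]\times\RR^2} a_\Delta\cdot\nabla\varphi\,\rho_\Delta = -\frac12\int_0^T\!\!\iint_{\RR^2\times\RR^2} g(t,x,y)\,\rho_\Delta(t,dx)\,\rho_\Delta(t,dy)\,dt + O(\Delta),
\]
with $g(t,x,y)=\nabWchapo(x-y)\cdot(\nabla\varphi(t,x)-\nabla\varphi(t,y))$. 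The key point is that $g$ is bounded and continuous on $[0,T]\times\RR^2\times\RR^2$: off the diagonal by {\bf (A2)}, and on the diagonal the apparent singularity of $\nabWchapo$ is cancelled since $|\nabla\varphi(t,x)-\nabla\varphi(t,y)|\le\|D^2\varphi\|_\infty|x-y|$ and $|\nabWchapo|\le w_\infty$, so $g\to0$ there; the same bound controls the near-diagonal cell pairs in the $O(\Delta)$ term. Since $\rho_\Delta(t)\rightharpoonup\rho(t)$ narrowly for each $t$, also $\rho_\Delta(t)\otimes\rho_\Delta(t)\rightharpoonup\rho(t)\otimes\rho(t)$ narrowly, so $\iint g\,d(\rho_\Delta(t)\otimes\rho_\Delta(t))\to\iint g\,d(\rho(t)\otimes\rho(t))$ for every $t$; this quantity being bounded by $\|g\|_\infty$ uniformly in $t$ and $\Delta$, dominated convergence in $t$ gives the limit, which by the oddness of $\nabWchapo$ and \eqref{achapo}--\eqref{achaposym} equals $\int_0^T\!\int \achapo_\rho(t,x)\cdot\nabla\varphi(t,x)\,\rho(t,dx)\,dt$.

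Letting $\Delta\to0$ in the discrete identity then shows that $\rho\in C([0,T];\calP_2(\RR^2))$ is a distributional solution of \eqref{eq:distrib} with initial datum $\rho^{ini}$ and $\achapo_\rho$ given by \eqref{achapo}. By the uniqueness statement of Theorem~\ref{Exist}, $\rho$ is the Filippov characteristic flow solution; in particular the limit does not depend on the extracted subsequence, so the whole family $\rho_\Delta$ converges to it as $\Delta\to0$ with $\Delta t$ subject to \eqref{CFL}. The two points that I expect to need the most care are this nonlinear flux passage --- the discrete analogue of Lemma~\ref{lemstab_a}, combining the diagonal cancellation with the control of the cell-block averaging of the singular kernel --- and making the time equicontinuity precise enough to produce a limit curve in $\calP_2(\RR^2)$, which is what allows Theorem~\ref{Exist} to be invoked.
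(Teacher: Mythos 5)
Your overall strategy --- a priori bounds from Lemmas~\ref{lem:CFL} and~\ref{bounddismom}, compactness, passage to the limit in the weak formulation with the flux handled by the symmetrization/diagonal-cancellation argument of Lemma~\ref{lemstab_a}, and identification of the limit by uniqueness --- is essentially the paper's proof. The time-equicontinuity step you add (to get a limit curve in $C([0,T];\calP_2(\RR^2))$ rather than merely a weak-$*$ limit in $\calM_b([0,T]\times\RR^2)$) is not made explicit in the paper but is a reasonable and indeed welcome tightening; and your treatment of the nonlinear flux, including the observation that $g(t,x,y)=\nabWchapo(x-y)\cdot(\nabla\varphi(t,x)-\nabla\varphi(t,y))$ extends continuously by $0$ to the diagonal, is exactly the mechanism the paper uses (the paper additionally has to deal with the shifted kernels $\widehat{\pa_xW}(x\pm\Delta x-x',\cdot)$ coming from the staggered averages, which it controls by the same near-diagonal/far-diagonal splitting you invoke).

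There is, however, a genuine gap in your last step. You conclude by saying that the limit $\rho$ is a distributional solution of \eqref{eq:distrib} and therefore, ``by the uniqueness statement of Theorem~\ref{Exist}'', equals the Filippov-flow solution. But the uniqueness in Theorem~\ref{Exist} (proved via Proposition~\ref{prop:contraction}) is a uniqueness statement within the class of solutions of the form $X_\#\rho^{ini}$ with $X$ the Filippov flow of $\achapo_\rho$: the contraction argument relies crucially on the flow representation of both solutions being compared. Your limit $\rho$ is, a priori, only a distributional solution in $C([0,T];\calP_2(\RR^2))$; nothing in the compactness argument exhibits it as a pushforward by a Filippov characteristic, so Theorem~\ref{Exist} cannot be invoked directly. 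The paper closes exactly this gap by a detour through the gradient-flow theory: since $\achapo_\rho$ is bounded, it lies in $L^2((0,T);L^2(\rho(t)))$, whence $\rho\in AC^2([0,T];\calP_2(\RR^2))$ by \cite[Theorem~8.3.1]{Ambrosio}; the limit is then a gradient flow solution in the sense of Definition~\ref{defgradflow}, unique by Theorem~\ref{GradFlow}, and equal to the Filippov solution by the equivalence Theorem~\ref{equivalence}. You need this (or some other uniqueness statement valid for general distributional solutions in $AC^2$) to finish; once inserted, the subsequence-independence argument you give is correct.
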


\begin{proof}
From Lemma \ref{lem:CFL}, we have that $\rho_{ij}^n\geq 0$
provided the condition \eqref{CFL} is satisfied. Moreover, by
conservation of the mass we deduce that the sequence nonnegative
bounded measures $(\rho_\Delta)_\Delta$ satisfies for all $t\in
[0,T]$, $|\rho_\Delta(t)|(\RR^2)=1$. Therefore, we can extract a
subsequence, still denoted $(\rho_\Delta)_\Delta$, converging for the weak
topology towards $\rho$ as $\Delta t$, $\Delta x$ and $\Delta y$
go to $0$ satisfying \eqref{CFL}, i.e. $\forall \, \phi \in
C_0([0,T]\times\RR^2)$,
$$
\int_0^T\iint_{\RR^2} \phi(t,x,y) \rho_\Delta(t,x,y) \,dxdydt
\longrightarrow \int_0^T\iint_{\RR^2} \phi(t,x,y) \rho(t,dx,dy)
\,dt.
$$
Actually, due to the estimate \eqref{boundmoment2} in Lemma \ref{bounddismom}, we can deduce that
$$
\int_0^T\iint_{\RR^2} (x^2+y^2) \rho(t,dx,dy)
\,dt.
$$

We choose $\Delta t>0$ and $N_T\in \NN^*$ such that condition
\eqref{CFL} holds and $T=\Delta t N_T$. Let $\phi\in
\calD([0,T]\times \RR^2)$ be smooth and compactly supported. We
denote
$$
\psi^n_{i,j} = \int_{t_n}^{t_{n+1}}\iint_{C_{ij}} \phi(t,x,y)\,dtdxdy,
$$
such that
$$
\int_0^T \iint_{\RR^2} \rho_\Delta(t,x,y)\phi(t,x,y)\,dtdxdy =
\sum_{n=0}^{N_T}\sum_{i\in \ZZ}\sum_{j\in \ZZ} \rho_{ij}^n
\psi^n_{i,j}.
$$
In particular, we have
$$
\begin{array}{r}
\ds \sum_{n,i,j} \frac{1}{\Delta t} \big(\rho_\Delta
(t_{n+1},x_i,y_j)-\rho_\Delta(t_n,x_i,y_j)\big) \psi^n_{i,j}
\ds =- \sum_{n,i,j} \rho_{i,j}^n \frac{\psi^n_{i,j}-\psi^{n-1}_{i,j}}{\Delta t}  \\[2mm]
\ds = - \int_0^T \iint_{\RR^2}
\rho_\Delta(t,x,y)\frac{\phi(t,x,y)-\phi(t-\Delta t,x,y)}{\Delta
t}\,dtdxdy.
\end{array}
$$
We have $\phi(t,x,y)-\phi(t-\Delta t,x,y)=\pa_t\phi(t,x,y) \Delta
t+O(\Delta t^2)$. From the weak convergence of $\rho_\Delta$ and
the fact that $\rho_\Delta$ is a bounded measure with a bound not
depending on the mesh, we deduce that the latter integral
converges to
$$
- \int_0^T \iint_{\RR^2} \pa_t\phi(t,x,y)\rho(t,dx,dy)\,dt.
$$
By the same token, we have
$$
\begin{array}{l}
\ds \sum_{n,i,j} \frac{1}{2\Delta x}\big(\rho_\Delta (t_n,x_{i+1},y_j)-2\rho_\Delta (t_{n},x_i,y_j)+\rho_\Delta (t_{n},x_{i-1},y_j)\big) \psi^n_{i,j} \\[2mm]
\ds = \int_0^T \iint_{\RR^2}
\rho_\Delta(t,x,y)\frac{\phi(t,x+\Delta
x,y)-2\phi(t,x,y)-\phi(t,x-\Delta x,y)}{2\Delta x}\,dtdxdy,
\end{array}
$$
Using the fact that $|\phi(t,x+\Delta
x,y)-2\phi(t,x,y)-\phi(t,x-\Delta x,y)|\leq
\|\pa_{xx}\phi\|_\infty \Delta x^2$, we deduce that this latter
integral converges towards $0$ as $\Delta t$, $\Delta x$ and
$\Delta y$ go to $0$. Futhermore, we have \beq\label{termflux}
\begin{array}{l}
\ds \sum_{n,i,j} \frac{1}{\Delta x}\big({a_x}^n_{i+1/2j}\rho_{i+1/2j}^n- {a_x}^n_{i-1/2j}\rho_{i-1/2j}^n\big) \psi^n_{i,j} =  \\[2mm]
\ds = -\frac{1}{4\Delta x}\sum_{n,i,j}
{a_x}_{ij}^n\rho_{ij}^n\big(\psi^n_{i+1,j}-\psi^n_{i-1,j}\big) +
{a_x}_{i+1j}^n\rho_{ij}^n \big(\psi^n_{i+1,j}-\psi^n_{i,j}\big) +
{a_x}_{i-1j}^n\rho_{ij}^n \big(\psi^n_{i,j}-\psi^n_{i-1,j}\big)
 \\[3mm]
\ds = -\frac{1}{4\Delta x} \int_0^T\!\! \iint_{\RR^2} \!
\Big({a_x}_\Delta(t,x,y)\rho_\Delta(t,x,y) \big(\phi(t,x+\Delta x,y)-\phi(t,x-\Delta x,y)\big) \\[1mm]
\ds \qquad\qquad\qquad\quad +{a_x}_\Delta(t,x+\Delta x,y)\rho_\Delta(t,x,y) \big(\phi(t,x+\Delta x,y)-\phi(t,x,y)\big) \\[1mm]
\ds \qquad\qquad\qquad\quad +{a_x}_\Delta(t,x-\Delta
x,y)\rho_\Delta(t,x,y) \big(\phi(t,x,y)-\phi(t,x-\Delta
x,y)\big)\Big)\,dtdxdy.
\end{array}
\eeq
Using a Taylor expansion, the mass conservation and the bound \eqref{borngradW},
we deduce from \eqref{termflux} that
\beq\label{termflux2}
\begin{array}{c}
\ds \sum_{n,i,j} \frac{1}{\Delta
x}\big({a_x}^n_{i+1/2j}\rho_{i+1/2j}^n -
{a_x}^n_{i-1/2j}\rho_{i-1/2j}^n\big) \psi^n_{i,j}
 = -\frac{1}{4} \int_0^T\!\! \iint_{\RR^2} \!
\Big(2{a_x}_\Delta(t,x,y)\rho_\Delta(t,x,y) \,\pa_x\phi(t,x,y) \\[1mm]
\ds \qquad\qquad +\big({a_x}_\Delta(t,x+\Delta
x,y)+{a_x}_\Delta(t,x-\Delta x,y)\big) \rho_\Delta(t,x,y)
\pa_x\phi(t,x,y)\Big)\,dtdxdy + O(\Delta x).
\end{array}
\eeq
Then, from \eqref{def:ai12j}, we deduce that for any test function $\xi$ we have on the one hand
	$$\begin{array}{l}
\ds \iint_{\RR^2} {a_x}_\Delta \rho_\Delta \xi (x_1,y_1) \,dx_1dy_1 = \\[3mm]
\qquad \ds =\sum_{i,j,k,\ell} \frac{1}{\Delta x\Delta
y}\iint_{C_{ij}}\iint_{C_{k\ell}} \rho_{k\ell}^n\,\rho_{ij}^n
\widehat{\pa_xW}(x-x',y-y')\,dxdydx'dy' \iint_{C_{ij}}
\xi(x_1,y_1) \,dx_1dy_1,
	\end{array}$$
on the other hand,
	$$
\iint_{\RR^2} \widehat{\pa_xW}*\rho_\Delta \, \rho_\Delta
\xi(x,y)\,dxdy = \sum_{i,j,k,\ell} \iint_{C_{ij}}\iint_{C_{k\ell}}
\rho_{k\ell}^n\,\rho_{ij}^n\,
\widehat{\pa_xW}(x-x',y-y')\xi(x,y)\,dx'dy'dxdy.
	$$
Moreover, for any test function $\xi$ smooth and compactly supported, we have for all $x,y \in C_{ij}$,
$$
\frac{1}{\Delta x\Delta y} \iint_{C_{ij}} \xi(x_1,y_1) \,dx_1dy_1
= \xi(x,y) + O(\Delta x) + O(\Delta y).
$$
Thus we have
$$
\iint_{\RR^2} {a_x}_\Delta \rho_\Delta \xi (x,y) \,dxdy =
\iint_{\RR^2} \widehat{\pa_xW}*\rho_\Delta \, \rho_\Delta
\xi(x,y)\,dxdy + O(\Delta x) + O(\Delta y).
$$

Finally, we deduce from \eqref{termflux2} \beq\label{termflux3}
\sum_{n,i,j} \frac{1}{\Delta
x}\big({a_x}^n_{i+1/2j}\rho_{i+1/2j}^n -
{a_x}^n_{i-1/2j}\rho_{i-1/2j}^n\big) \psi^n_{i,j} =
 -\frac 12 \big( I_1 + I_2 \big) + O(\Delta x)+O(\Delta y),
\eeq
where
$$
I_1 = \int_0^T\!\! \iiiint_{\RR^4} \widehat{\pa_xW} (x-x',y-y')
\rho_\Delta(t,x',y')\rho_\Delta(t,x,y) \pa_x\phi(t,x,y) \,dxdy dt,
$$
$$
\begin{array}{ll}
\ds I_2 = \frac 12 \int_0^T\!\! \iiiint_{\RR^4} &
\ds \!\!\! \big(\widehat{\pa_xW} (x+\Delta x-x',y-y')+ \widehat{\pa_xW} (x-\Delta x-x',y-y')\big)  \\
&\ds \rho_\Delta(t,x',y') \rho_\Delta(t,x,y)\pa_x\phi(t,x,y)
\,dx'dy'dxdydt.
\end{array}
$$
As a direct consequence of Lemma \ref{lemstab_a}, we have
$$
I_1 \underset{\Delta \to 0}{\longrightarrow}
\int_0^T\!\!\iint_{\RR^2} \widehat{\pa_xW}\!*\!\rho
(t,x,y)\rho(t,x,y) \pa_x\phi(t,x,y) \,dtdxdy.
$$
For the term $I_2$, we proceed as in the proof of Lemma \ref{lemstab_a}. We recall the main ingredients of this proof.
First, using the symmetry of $W$, we write
$$
\begin{array}{ll}
\ds I_2 = \frac 14 \int_0^T\!\! \iiiint_{\RR^4} &
\ds \!\!\! \big(\widehat{\pa_xW} (x+\Delta x-x',y-y')+ \widehat{\pa_xW} (x-\Delta x-x',y-y')\big)  \\
&\ds \rho_\Delta(t,x',y') \rho_\Delta(t,x,y)
\big(\pa_x\phi(t,x,y)-\pa_x\phi(t,x',y')\big) \,dx'dy'dxdydt.
\end{array}
$$
We introduce the set $D_\alpha = \{(x,y,x',y')$ s.t.
$|x-x'|+|y-y'|<\alpha\}$ for some positive coefficient
$\alpha<\Delta x$ and split the latter integral into the sum of
the integral over $\RR^4\setminus D_\alpha$ and over $D_\alpha$.
Using the uniform continuity of $\pa_x\phi$, we deduce that the
integral over $D_\alpha$ is small for small $\alpha$. Then, using
the fact that by continuity of $\pa_xW$ on $\RR^4\setminus \{0\}$,
we have for all $(x,x',y,y')\in\RR^4\setminus D_\alpha$
$$
\lim_{\Delta x \to 0} \big(\widehat{\pa_xW} (x+\Delta x-x',y-y')+
\widehat{\pa_xW} (x-\Delta x-x',y-y')\big) = \widehat{\pa_xW}
(x-x',y-y'),
$$
we deduce
$$
I_2 \underset{\Delta \to 0}{\longrightarrow}
\int_0^T\!\!\iint_{\RR^2} \widehat{\pa_xW}\!*\!\rho
(t,x,y)\rho(t,x,y) \pa_x\phi(t,x,y) \,dtdxdy.
$$
Therefore, we conclude from \eqref{termflux3}
$$
\begin{array}{l}
\ds \lim_{\Delta \to 0} \sum_{n,i,j} \frac{1}{\Delta x}\big({a_x}^n_{i+1/2j}\rho_{i+1/2j}^n- {a_x}^n_{i-1/2j}\rho_{i-1/2j}^n\big) \psi^n_{i,j} = \\[2mm]
\qquad\qquad\qquad \ds
= -\int_0^T\!\!\iint_{\RR^2} \pa_x\phi(t,x,y) \widehat{\pa_x W}\!*\!\rho(t,x,y) \rho(t,x,y)\,dtdxdy.
\end{array}
$$

Finally, multiplying equation \eqref{dis_num} by $\psi^n_{i,j}$,
summing over $n$, $i$, $j$ and taking the limit $\Delta t$,
$\Delta x$, $\Delta y$ to $0$, we obtain
$$
\int_0^T\iint_{\RR^2} \big(\pa_t\phi(t,x,y) + \nabWchapo\!*\!\rho(t,x,y)\cdot\nabla\phi(t,x,y)\big)\rho(t,dx,dy) = 0.
$$
Thus $\rho$ is a solution in the sense of distributions of the aggregation equation \eqref{EqInter}. We proceed now as in the proof of Theorem \ref{Exist}. Due to the assumptions on the potential, we have that $-\nabWchapo\!*\!\rho \in L^2((0,T),L^2(\rho(t)))$. Then, we deduce that $\rho\in AC^2([0,T],\calP_2(\RR^2))$ using \cite[Theorem 8.3.1]{Ambrosio}.
By uniqueness of the gradient flow solution and the equivalence Theorem \ref{equivalence}, we conclude that $\rho$ is the solution of Theorem \ref{Exist}.
Since the limit is unique, we deduce that the whole sequence is converging towards this limit.
\end{proof}

\subsection{Numerical simulations}

We perform in this subsection some numerical simulations obtained by implementing the scheme
described above \eqref{dis_num}--\eqref{def:ai12j}. We will consider two examples of potential which fit the assumptions {\bf (A0)--(A2)}, that is
$$
W_1(x) = 1-e^{-5|x|}\ ; \qquad W_2(x) = |x|.
$$
For such potentials it is known, see \cite[Section 4]{Carrillo}, that finite
time collapse occurs. More precisely, for any compactly suported initial data,
there exists a finite time beyond which the solution is given by a single
Dirac Delta mass located at the center of mass. We verify here that we can observe such
phenomena thanks to the numerical scheme introduced above.

In our numerical simulations, we consider an initial data given by the sum of three regular bumps:
$$
\begin{array}{ll}
\ds \rho^0(x,y) = &\ds \exp(-C_x (x-1/4)^2-C_x(y-1/3)^2)+ \exp(-C_x(x-0.8)^2-C_x(y-0.6)^2) \\[2mm]
&\ds + 0.9 \exp(-C_x (x-0.4)^2-C_x (y-0.6)^2),
\end{array}
$$
with $c_x=100$.

\begin{figure}[ht!]
\begin{center}
  \includegraphics[width=7cm]{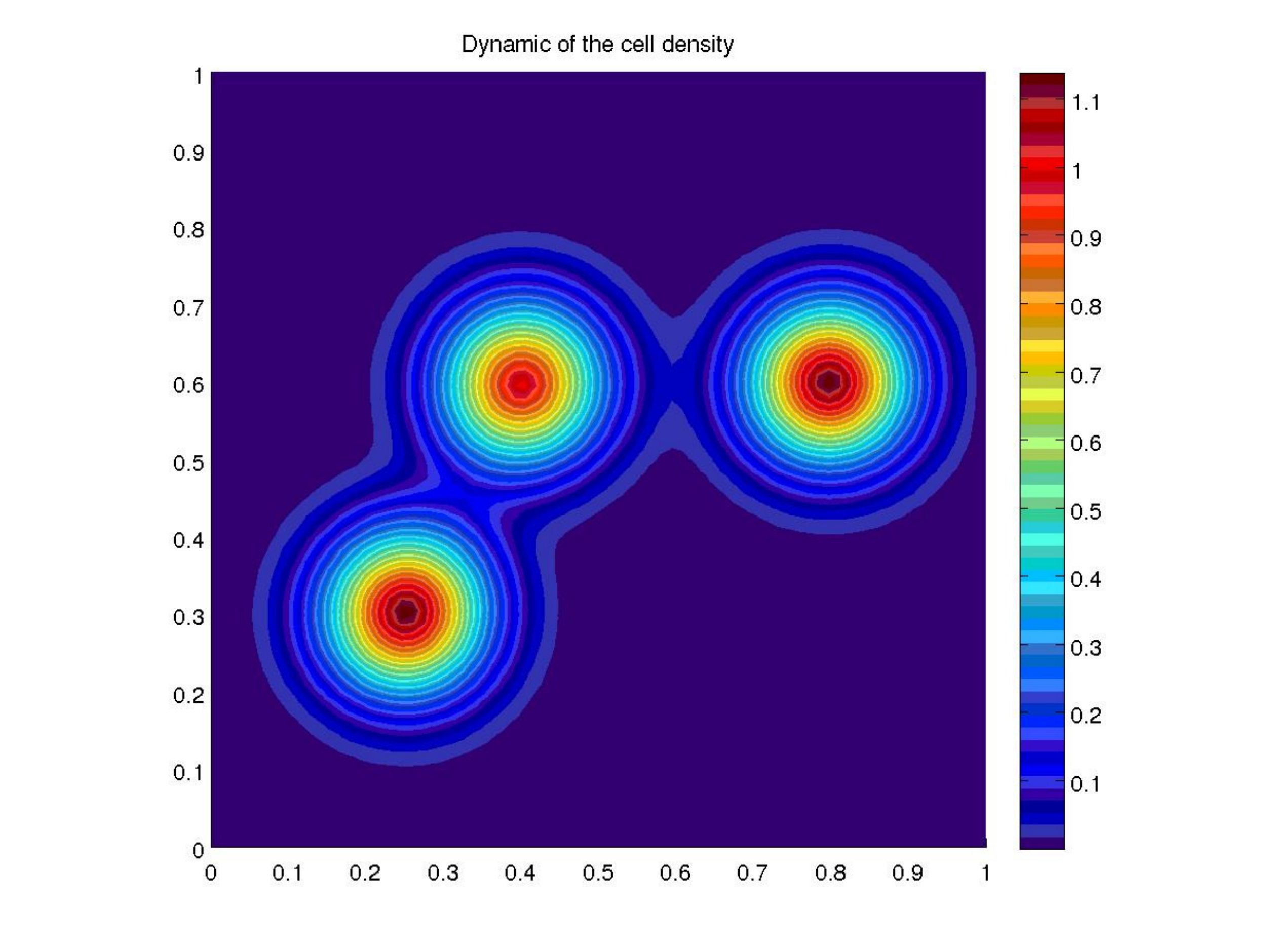}
  \includegraphics[width=7cm]{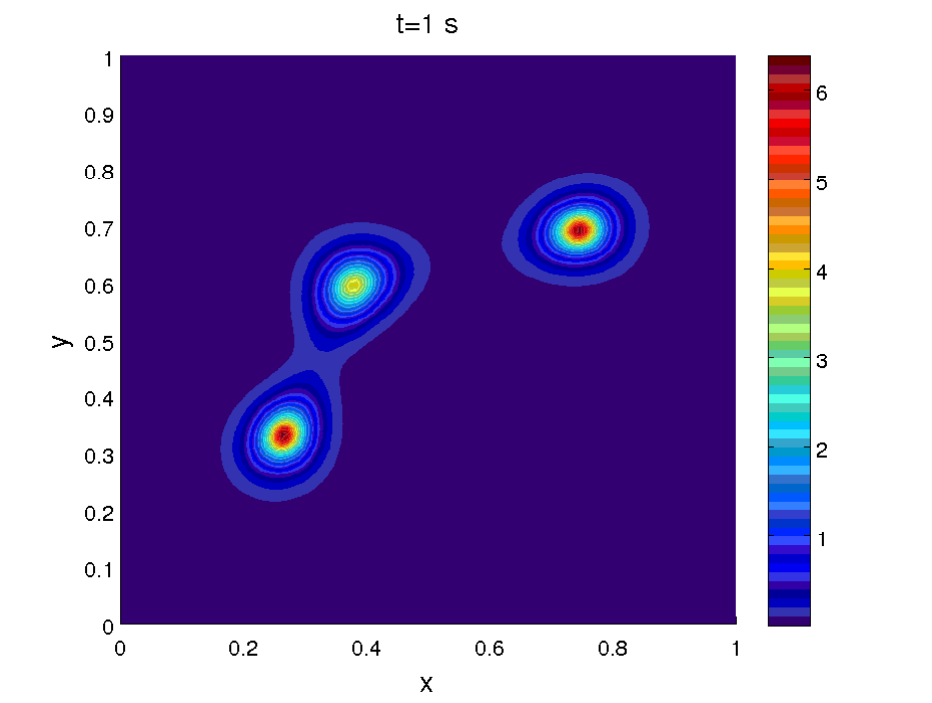} \\
  \includegraphics[width=7cm]{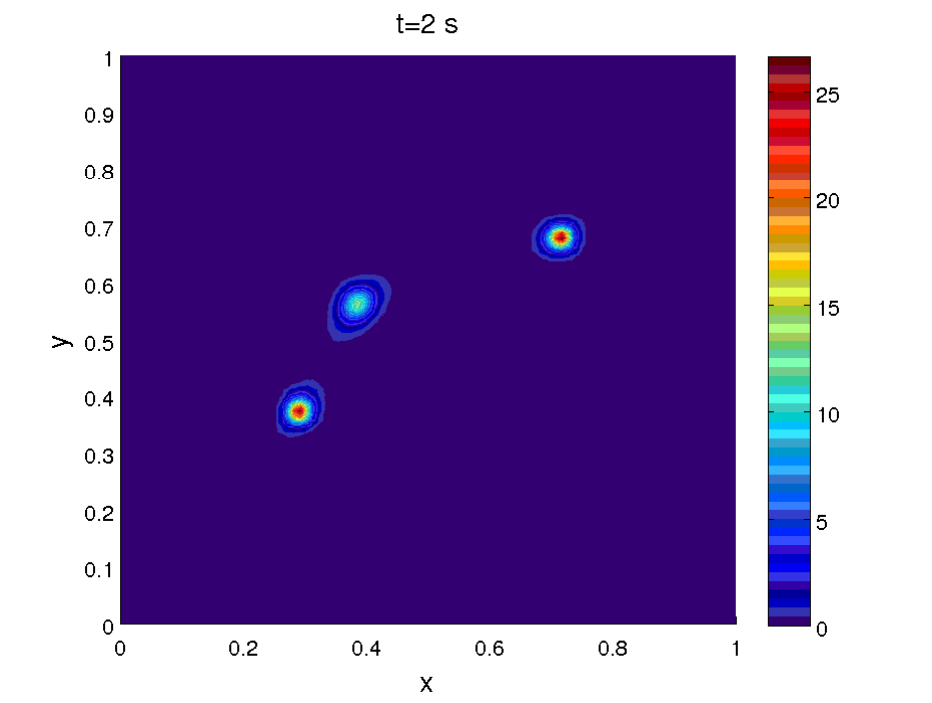}
  \includegraphics[width=7cm]{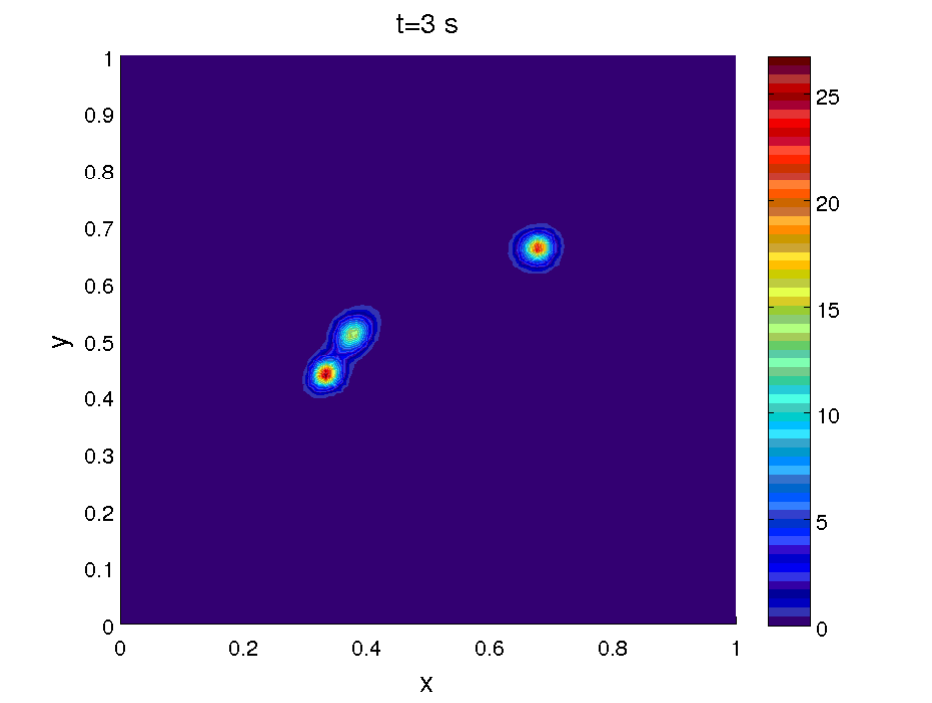} \\
  \includegraphics[width=7cm]{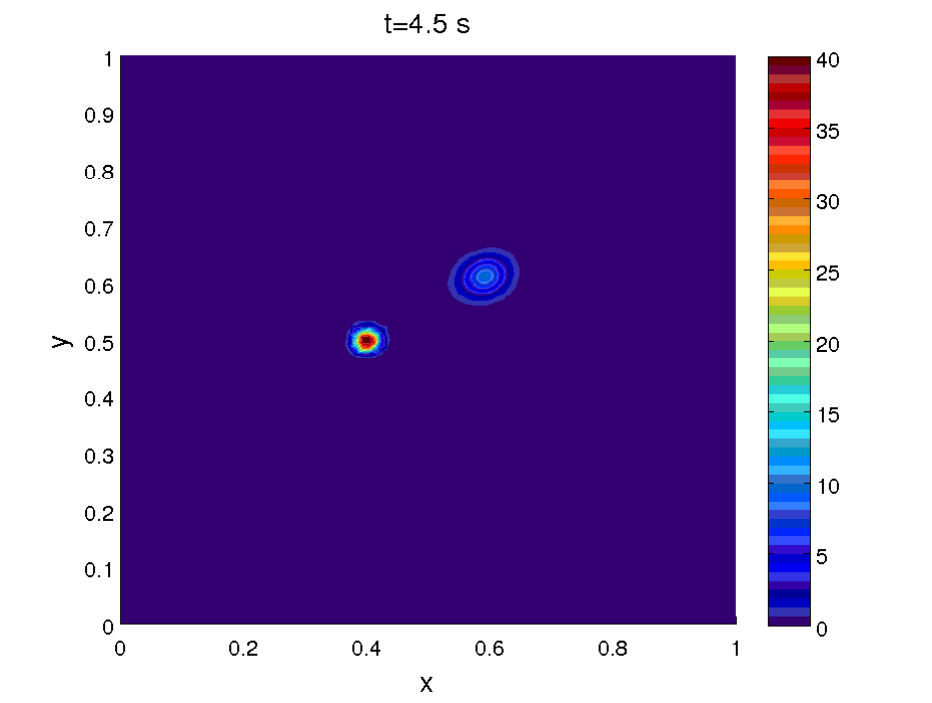}
  \includegraphics[width=7cm]{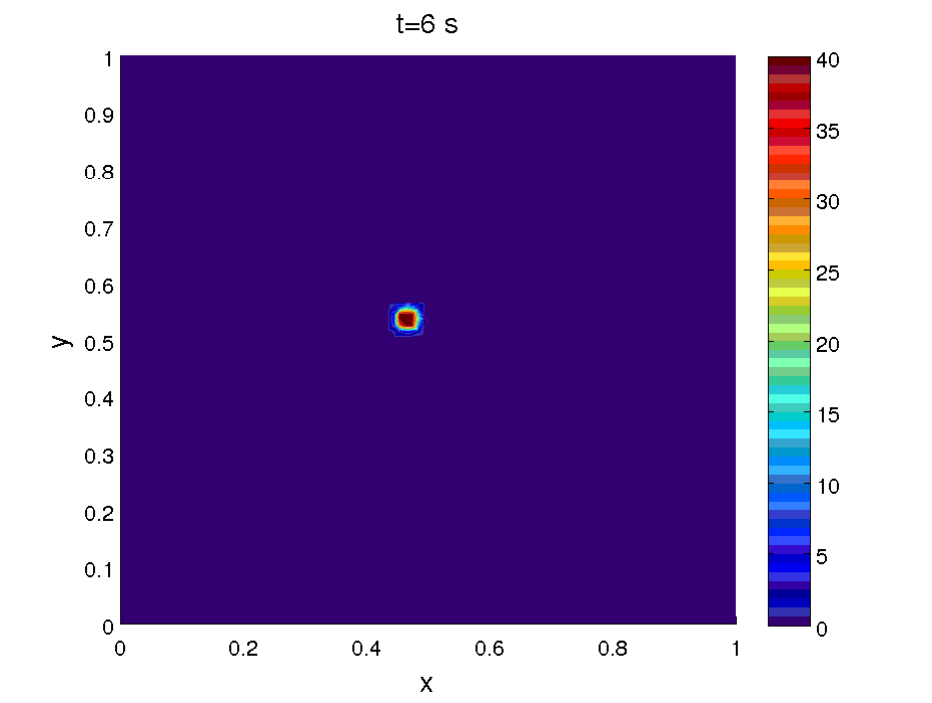}
  \caption{Dynamics of the cell density $\rho$ with intial data given by the sum of three bumps
  in the case $W_1(x)=1-e^{-5|x|}$.}\label{fig1}
\end{center}
\end{figure}

\begin{figure}[ht!]
\begin{center}
  \includegraphics[width=7cm]{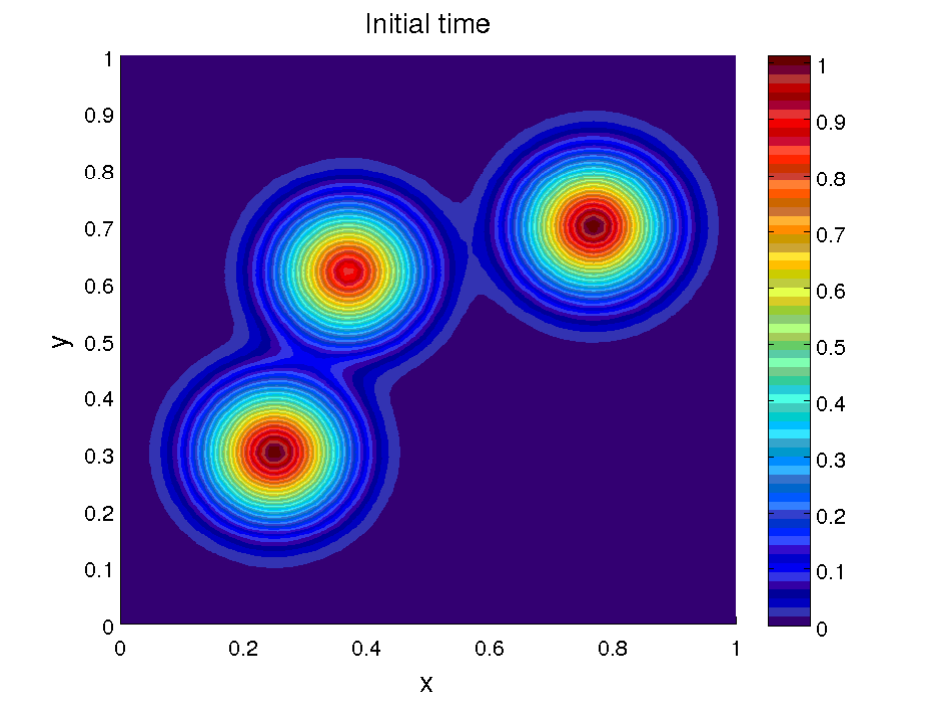}
  \includegraphics[width=7cm]{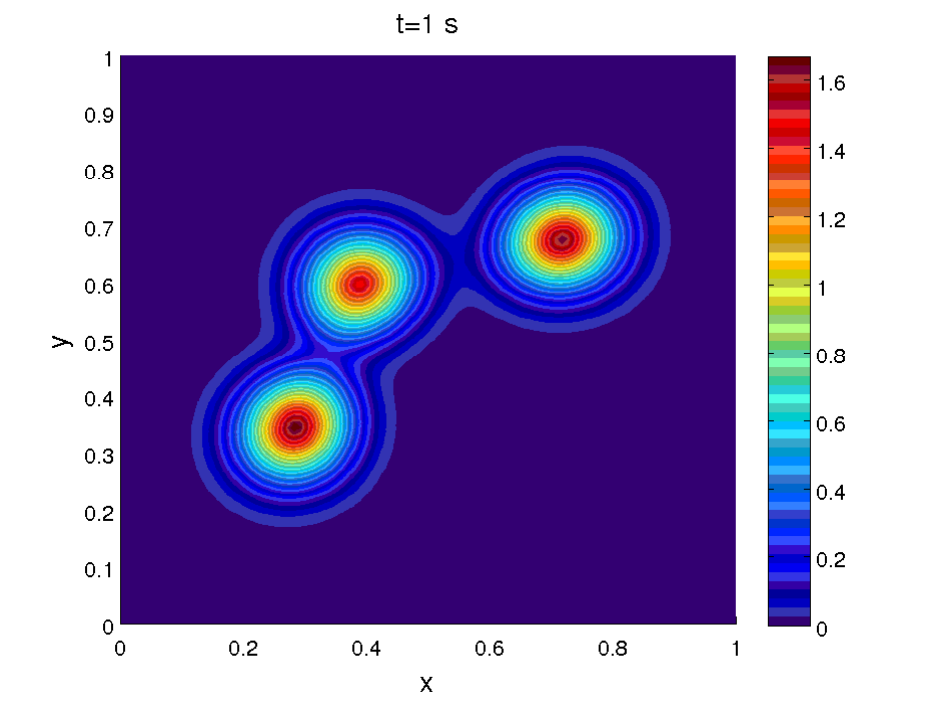} \\
  \includegraphics[width=7cm]{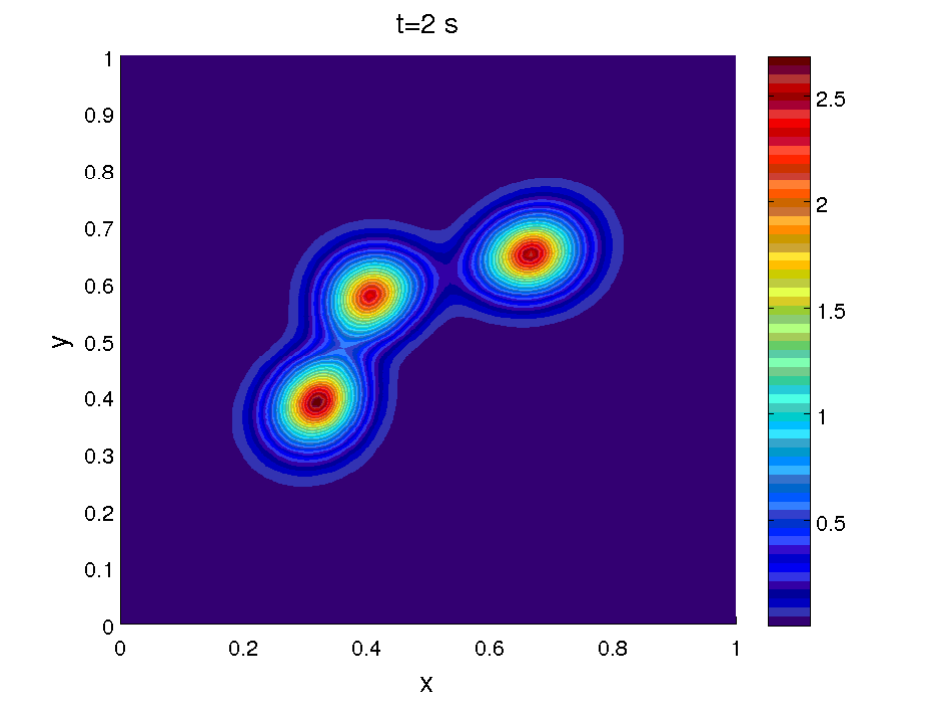}
  \includegraphics[width=7cm]{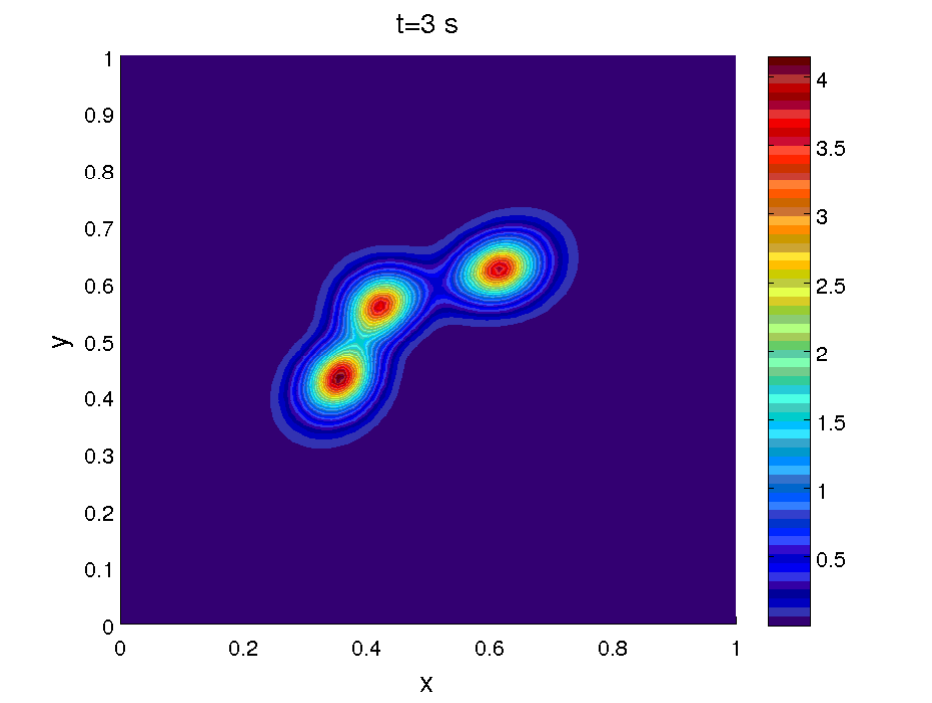} \\
  \includegraphics[width=7cm]{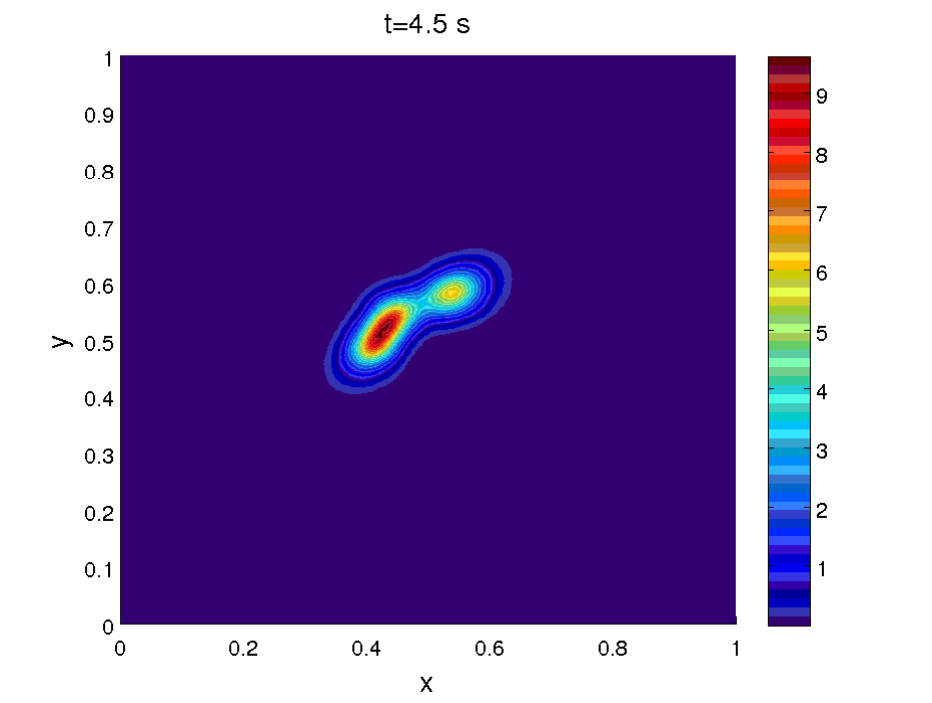}
  \includegraphics[width=7cm]{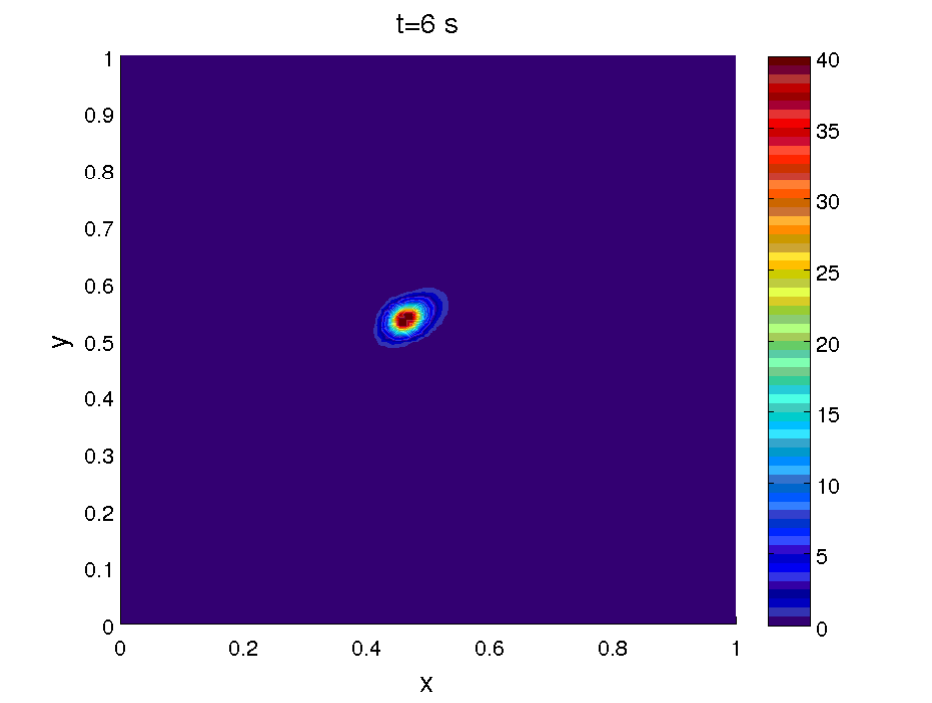}
  \caption{Dynamics of the cell density $\rho$ with intial data given by the sum of three bumps
  in the case $W_2(x)=|x|$.}\label{fig2}
\end{center}
\end{figure}

Due to the finite time collapse result, we expect the convergence in finite time of the solution
towards a single Dirac Delta. In fact, this is what we observe in Figure \ref{fig1} for $W_1$ and in Figure \ref{fig2} for $W_2$. However, comparing the two Figures, the qualitative properties of the convergence towards a single Dirac Delta are not the same depending on the choice of the potential.

In fact, within the dynamics given in Figure \ref{fig1}, we can distinguish two phases in the simulation.
In a first phase, we notice the concentration of the density into small masses : we can
consider that the numerical solution for time $t=1.8\ s$ is a sum of three numerical Dirac masses with
small numerical diffusion. Then these three masses aggregate into two and finally one single mass.
On the contrary, for the potential $W_2$, we observe in Figure \ref{fig2} that the numerical solution stays
regular and bounded until it forms one single bump and then it collapses. 

This tends to indicate the existence of two different time scales: the one corresponding to a radial self-similar collapse onto a single Dirac, and the one corresponding to the interactions between different Dirac Deltas. In the case of the potential $W_1$, we observe a faster time scale for the self-similar blow-up of regular solutions into several Dirac Deltas, then the trajectories are given by the sticky particle dynamics for these aggregates. Whereas for the potential $W_2$ the time scale of the self-similar blow-up is slower compared to the dynamics of the attraction of the aggregates, and then the blow up occurs after
all regular bumps aggregate into a single regular bump before the final fate of total collapse.

A very nice feature of this numerical scheme is that it allows for simulations after the first blow-up happens with seemingly good approximation in the measure sense by comparison to the particle simulations, see the one dimensional case \cite{sinum}. The regularization induced on the Dirac Deltas by the numerical diffusion of the scheme does not seem to change the qualitative properties of the solution.

\newpage

\appendix

\Section*{Technical Lemmas}

In this appendix we state some technical lemmas which are used in the paper.
\begin{lemma}\label{lemA}
Let us assume that $W$ satisfies assumptions {\bf (A0)--(A2)}.
Let $(\rho_n)_{n\in \NN}$ be a sequence of measures in $\calP_2(\RR^d)$ such that
$\rho_n \rightharpoonup \rho$ weakly as measures. Then
$$
\lim_{n\to +\infty} \int_{x\neq y} \nabla W(x-y) \rho_n(dy) =
\int_{x\neq y} \nabla W(x-y) \rho(dy), \quad \mbox{ for a.e. } x \in \RR^d.
$$
\end{lemma}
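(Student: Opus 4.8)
The plan is to prove the convergence at every point $x\in\RR^d$ that is \emph{not} an atom of the limit measure $\rho$; since a finite Borel measure has at most countably many atoms, the set of excluded $x$ is Lebesgue-negligible, which is exactly what ``for a.e. $x$'' requires. So I fix such an $x$ and set $g_x(y):=\nabWchapo(x-y)$. By \eqref{borngradW} this function is bounded, $|g_x|\le w_\infty$; by {\bf (A2)} it is continuous on $\RR^d\setminus\{x\}$, so its set of discontinuities lies in $\{x\}$, which is $\rho$-null by the choice of $x$. Since $\achapo_{\rho_n}(x)=-\int_{\RR^d}g_x\,d\rho_n$ and $\achapo_\rho(x)=-\int_{\RR^d}g_x\,d\rho$ (the restriction $y\neq x$ in the statement being immaterial with the convention $\nabWchapo(0)=0$), it suffices to show $\int g_x\,d\rho_n\to\int g_x\,d\rho$.

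The next step is to upgrade the weak convergence in $\sigma(\calM_b,C_0)$ to narrow convergence (tested against $C_b$). Because all the $\rho_n$ and $\rho$ are probability measures, no mass escapes to infinity: given $\eps>0$, choose $R$ with $\rho(B(0,R))>1-\eps$ and $\phi\in C_c(\RR^d)$ with $0\le\phi\le1$ and $\phi\equiv1$ on $B(0,R)$; then $\int\phi\,d\rho_n\to\int\phi\,d\rho>1-\eps$, hence $\rho_n(\mathrm{supp}\,\phi)>1-\eps$ for $n$ large, i.e.\ $\{\rho_n\}$ is tight. Together with the vague convergence and Prokhorov's theorem this gives $\rho_n\to\rho$ narrowly.

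To conclude I would invoke the standard extension of the portmanteau theorem: a bounded Borel function whose discontinuity set is $\rho$-negligible is a continuity set for a narrowly convergent sequence, so $\int g_x\,d\rho_n\to\int g_x\,d\rho$. If one prefers a self-contained argument, split $g_x=g_x\mathbf{1}_{\overline{B(x,\delta)}}+g_x\mathbf{1}_{\overline{B(0,R)}\setminus \overline{B(x,\delta)}}+g_x\mathbf{1}_{\RR^d\setminus \overline{B(0,R)}}$: the first piece is $\le w_\infty\,\rho_n(\overline{B(x,\delta)})$, controlled uniformly in large $n$ by $\limsup_n\rho_n(\overline{B(x,\delta)})\le\rho(\overline{B(x,\delta)})\downarrow\rho(\{x\})=0$; the last piece is uniformly small by tightness; and on the compact annulus $g_x$ is continuous and bounded, so after multiplying by a continuous cutoff it becomes an element of $C_c(\RR^d)$ to which the weak convergence applies directly.

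The only genuine point is the interaction of the two defects of $g_x$: the singularity of $\nabla W$ at the origin, which makes $g_x$ discontinuous at $y=x$ and forces us to discard the (Lebesgue-null) set of atoms of $\rho$, the remaining near-diagonal contribution being killed by the vanishing of $\rho_n(\overline{B(x,\delta)})$; and the mere boundedness of $g_x$ (it is not compactly supported), which is handled by the tightness that comes for free from conservation of mass for probability measures. Everything else is routine book-keeping.
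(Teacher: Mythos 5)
Your proof is correct, and it takes a genuinely different route from the paper's. The paper regularizes the potential: it replaces $W$ by smooth even $W_k$ with $|\nabla W_k|\le|\nabla W|$ and $\sup_{|z|>1/k}|\nabla W_k(z)-\nabla W(z)|\le 1/k$, passes to the limit in $n$ for each fixed $k$ (testing $\nabla W_k(x-\cdot)$ against the weak convergence), and then shows the error $\int\nabla(W_k-W)(x-y)\,\rho_n(dy)$ is small uniformly in $n$ by splitting at radius $1/k$ around $x$ and controlling $\rho_n(B(x,1/k)\setminus\{x\})$ with a continuous cutoff. You instead keep $W$ fixed, view $y\mapsto\nabWchapo(x-y)$ as a bounded Borel function whose discontinuity set is $\{x\}$, restrict to the co-countable (hence conull) set of $x$ that are not atoms of $\rho$, upgrade vague to narrow convergence by tightness, and invoke the portmanteau/mapping theorem for bounded functions with $\rho$-negligible discontinuity set; your self-contained splitting (near-diagonal, compact annulus, far field) is structurally the same bookkeeping as the paper's error estimates, with $\limsup_n\rho_n(\overline{B(x,\delta)})\le\rho(\overline{B(x,\delta)})\downarrow\rho(\{x\})=0$ playing the role of the paper's cutoff argument. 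Two things your version buys. First, it makes explicit where the ``a.e.\ $x$'' comes from: the paper's inequality $\int\xi\,d\rho\le\eps/4$ silently uses $\rho(\{x\})=0$, so its proof is likewise only valid off the atoms of $\rho$, and you name that exceptional set. Second, both arguments need more than $\sigma(\calM_b,C_0)$-convergence, since $\nabWchapo(x-\cdot)$ does not vanish at infinity; you supply the missing tightness from the fact that all the measures are probability measures (true in every application of the lemma), a point the paper passes over. What the paper's route buys is reusability: the same approximating family $W_k$ is needed anyway in Lemmas \ref{lemstab_a} and \ref{lemB}, where the potential itself varies along the sequence and your fixed-kernel portmanteau argument would not apply directly.
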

\begin{proof}
We consider a regularization of $W$ by $W_k$ with $k\in\NN$,
$W_k \in C^1(\RR^d)$, $W_k(-x)=W_k(x)$, $|\nabla W_k|\leq |\nabla W|\leq w_\infty$, and
\beq\label{boundWapprox}
\sup_{x\in \RR^d\setminus B(0,\frac1k)} |\nabla W_k(x)-\nabla W(x)|\leq \frac1k.
\eeq
By definition of the weak convergence of measures, we have
\beq\label{ineqA1}
\lim_{n\to +\infty} \int_{x\neq y} \nabla W_k(x-y) \rho_n(dy) =
\int_{x\neq y} \nabla W_k(x-y) \rho(dy), \quad \mbox{ for a.e. } x \in \RR^d.
\eeq
In fact, we can remove the point $y=x$ in the integral since by construction
$\nabla W_k$ is odd, then $\nabla W_k(0)=0$.
Moreover for all $n\in \NN$, we have that
\begin{align}
\Big|\int_{x\neq y} \nabla (W_k-W)(x-y) \rho_n(dy)\Big| \leq &\,
\Big|\int_{B(x,\frac1k)\setminus\{x\}} \nabla (W_k-W)(x-y) \rho_n(dy)\Big|  \nonumber\\[2mm]
& +\Big|\int_{\RR^d \setminus B(x,\frac1k)} \nabla (W_k-W)(x-y) \rho_n(dy)\Big|.\label{jejeje}
\end{align}
Given $\eps >0$, we use the property \eqref{boundWapprox} to get an estimate on the second term in \eqref{jejeje}
\begin{equation}\label{jojo}
\Big|\int_{\RR^d \setminus B(x,\frac1k)} \nabla (W_k-W)(x-y) \rho_n(dy)\Big|\leq \frac1k \leq \eps
\end{equation}
for $k\geq K_1$.

Now, we fix $K_2\geq K_1$ such that
\beq\label{ineq41}
\rho\big(B(x,\frac {2}{K_2})\setminus\{x\}\big) \leq \frac {\eps}{4}.
\eeq
We choose a continuous function $0\leq \xi\leq 1$ such that $\xi(x)=1$ on
$B(x,\frac {1}{K_2})$ and $\xi(x)=0$ on $\RR^d\setminus B(x,\frac {2}{K_2})$.
Then $\xi\in C_c(\RR^d)$ and for all $k\geq K_2$, we have
$$
\begin{array}{ll}
\ds \rho_n\big(B(x,\frac 1k)\setminus\{x\}\big) &\ds \leq
\rho_n\big(B(x,\frac{1}{K_2})\setminus\{x\}\big) \leq
\int_{\RR^d} \xi(x) \rho_n(dx)  \\[3mm]
&\ds \leq \int_{\RR^d} \xi(x)(\rho_n-\rho)(dx)
+\frac \eps 4,
\end{array}
$$
where we use \eqref{ineq41} for the last inequality.
From the weak convergence as measures of $\rho_n$ towards $\rho$, we have that
for $n\geq N_1$ large enough
$$
\Big|\int_{\RR^d} \xi(x)(\rho_n-\rho)(dx)\Big| \leq \frac \eps 4.
$$
Thus, for $k\geq K_2$ we obtain
$$
\rho_n\big(B(x,\frac 1k)\setminus\{x\}\big) \leq \frac \eps 2\,,
$$
uniform in $n\geq N_1$. Therefore, we can bound the first term of the right hand side in \eqref{jejeje} as
$$
\Big|\int_{B(x,\frac1k)\setminus\{x\}} \nabla (W_k-W)(x-y) \rho_n(dy)\Big| \leq 2w_\infty \rho_n(B(x,\frac1k)\setminus\{x\}) \leq w_\infty \eps\,.
$$

Collecting the last inequality with \eqref{jojo}, we deduce that
$$
\lim_{k\to \infty} \int_{x\neq y} \nabla W_k(x-y) \rho_n(dy) =
\int_{x\neq y} \nabla W(x-y) \rho_n(dy),
$$
uniformly for $n\geq N_1$. The same argument shows in particular that
$$
\lim_{k\to \infty} \int_{x\neq y} \nabla W_k(x-y) \rho(dy) =
\int_{x\neq y} \nabla W(x-y) \rho(dy).
$$
We conclude by passing into the limit $k\to \infty$ in \eqref{ineqA1}.
\end{proof}

\begin{lemma}\label{lemB}
Let us assume that $W$ satisfies assumptions {\bf (A0)--(A2)}.
Let $(W_n)_{n\in\NN^*}$ be a sequence of even functions in $C^1(\RR^d)$ satisfying {\bf (A1)}
and \eqref{borngradW} with constants $\lambda$ and $w_\infty$ not depending on $n$ and such that
\beq\label{boundnabWnB}
sup_{x\in \RR^d\setminus B(0,\frac 1n)} \big|\nabla W_n(x)-\nabla W(x)\big| \leq \frac 1n, \qquad \mbox{ for all } n\in \NN^*.
\eeq
Let $(\rho_n)_{n\in \NN}$ be a sequence of measures in $\calP_2(\RR^d)$ such that
$\rho_n \rightharpoonup \rho$ tightly. Then we have
$$\displaystyle \lim_{n\to +\infty} \int_{\RR^d} \nabla W_n(x-y) \rho_n(dy) =
\int_{x\neq y} \nabla W(x-y) \rho(dy), \quad \mbox{ for a.e. } x \in \RR^d.$$
\end{lemma}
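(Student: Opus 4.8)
The plan is to deduce Lemma~\ref{lemB} from the already established Lemma~\ref{lemA} by isolating the change in the potential from the change in the measure. For fixed $x$ I would decompose the difference to be estimated as $A_n(x)+B_n(x)$, where
\[
A_n(x)=\int_{\RR^d}\nabla W_n(x-y)\,\rho_n(dy)-\int_{x\neq y}\nabla W(x-y)\,\rho_n(dy),\qquad B_n(x)=\int_{x\neq y}\nabla W(x-y)\,(\rho_n-\rho)(dy).
\]
Since tight convergence is stronger than convergence in $\sigma(\calM_b,C_0)$, the term $B_n(x)$ tends to $0$ for a.e.\ $x$ directly by Lemma~\ref{lemA}. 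For $A_n(x)$ I would first note that $W_n$ being even makes $\nabla W_n$ odd, so $\nabla W_n(0)=0$ and the integral over $\RR^d$ coincides with the integral over $\{y\neq x\}$; hence $A_n(x)=\int_{x\neq y}\big(\nabla W_n(x-y)-\nabla W(x-y)\big)\,\rho_n(dy)$.

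Next I would split $A_n(x)$ into the contributions over $\RR^d\setminus B(x,1/n)$ and over $B(x,1/n)\setminus\{x\}$. On the first region, hypothesis \eqref{boundnabWnB} bounds the integrand by $1/n$, so this part is at most $1/n$ since $\rho_n$ is a probability measure. On the second region, \eqref{borngradW} applied to both $W$ and $W_n$ bounds the integrand by $2w_\infty$, so this part is at most $2w_\infty\,\rho_n\big(B(x,1/n)\setminus\{x\}\big)$.

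The main obstacle is therefore to prove that $\rho_n\big(B(x,1/n)\setminus\{x\}\big)\to0$ for a.e.\ $x$, which is delicate precisely because both the measure and the ball shrink with $n$. Here I would fix $x$ with $\rho(\{x\})=0$ (this excludes only the countably many atoms of $\rho$, hence a Lebesgue-null set) and $\varepsilon>0$; since $\rho$ is finite and $\bigcap_{r>0}\overline{B}(x,r)=\{x\}$, continuity from above yields $r>0$ with $\rho(\overline{B}(x,r))<\varepsilon$. Applying the portmanteau theorem to the closed set $\overline{B}(x,r)$ together with the narrow (tight) convergence $\rho_n\rightharpoonup\rho$ gives $\limsup_n\rho_n(\overline{B}(x,r))\le\rho(\overline{B}(x,r))<\varepsilon$; and for $n\ge 1/r$ one has $B(x,1/n)\subset\overline{B}(x,r)$, so $\rho_n\big(B(x,1/n)\setminus\{x\}\big)<\varepsilon$ for all $n$ large enough. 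Consequently $|A_n(x)|\le 2w_\infty\varepsilon+1/n$ for $n$ large, and letting $n\to\infty$ and then $\varepsilon\to0$ shows $A_n(x)\to0$ for a.e.\ $x$. Combining this with the convergence of $B_n(x)$ obtained from Lemma~\ref{lemA} yields the claim.
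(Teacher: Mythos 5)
Your proposal is correct and follows essentially the same route as the paper: reduce to Lemma \ref{lemA} for the change of measure, split the change of potential into the regions $|x-y|\ge 1/n$ (controlled by \eqref{boundnabWnB}) and $|x-y|<1/n$ (controlled by $w_\infty$ and the smallness of $\rho_n$ on shrinking balls). Your invocation of the portmanteau theorem for the closed ball is exactly what the paper carries out by hand with a continuous cutoff function, and your explicit restriction to non-atoms of $\rho$ (a countable, hence Lebesgue-null, set) makes precise a point the paper leaves implicit.
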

\begin{proof}
Let us denote by
$$
a_n(x) := -\int_{\RR^d} \nabla W_n(x-y) \rho_n(dy), \quad \mbox{ and }
a(x) := -\int_{x\neq y} \nabla W(x-y) \rho(dy).
$$
We notice that since $W_n$ is even, we have $\nabla W_n(0)=0$, then
$$
a_n(x) := -\int_{x\neq y} \nabla W_n(x-y) \rho_n(dy).
$$
Let $\eps>0$, from Lemma \ref{lemA} we deduce that there exists $N_1\in \NN^*$
such that for all $n\geq N_1$,
\beq\label{ineq1}
\Big|\int_{x\neq y} \nabla W(x-y) (\rho_n-\rho)(dy)\Big| \leq \frac{\eps}{4}.
\eeq
Then using \eqref{boundnabWnB}, we deduce that
\beq\label{ineq2}
\Big|\int_{x\neq y} \big(\nabla W_n(x-y)-\nabla W(x-y)\big) \rho_n(dy)\Big| \leq
\frac 1n +
\int_{B(x,\frac 1n)\setminus\{x\}} \big|\nabla W_n(x-y)-\nabla W(x-y)\big|
 \rho_n(dy).
\eeq
Now, we proceed as in the proof of Lemma \ref{lemA}.
From assumptions on $W_n$ and $W$, we deduce (see \eqref{borngradW}) that
there exists a constant $C$ such that
\beq\label{ineq3}
\int_{B(x,\frac 1n)\setminus\{x\}} \big|\nabla W_n(x-y)-\nabla W(x-y)\big|
 \rho_n(dy) \leq C \rho_n\big(B(x,\frac 1n)\setminus\{x\}\big)
\eeq
We fix $N_2\geq N_1$ such that
\beq\label{ineq4}
\rho\big(B(x,\frac {2}{N_2})\setminus\{x\}\big) \leq \frac {\eps}{4}.
\eeq
We choose a continuous function $0\leq \xi\leq 1$ such that $\xi(x)=1$ on
$B(x,\frac {1}{N_2})$ and $\xi(x)=0$ on $\RR^d\setminus B(x,\frac {2}{N_2})$.
Then $\xi\in C_c(\RR^d)$ and for all $n\geq N_2$, we have
$$
\begin{array}{ll}
\ds \rho_n\big(B(x,\frac 1n)\setminus\{x\}\big) &\ds \leq
\rho_n\big(B(x,\frac{1}{N_2})\setminus\{x\}\big) \leq
\int_{\RR^d} \xi(x) \rho_n(dx)  \\[3mm]
&\ds \leq \int_{\RR^d} \xi(x)(\rho_n-\rho)(dx)
+\frac \eps 4,
\end{array}
$$
where we use \eqref{ineq4} for the last inequality.
From the tight convergence of $\rho_n$ towards $\rho$, we have that
for $n\geq N_3$ large enough (and $N_3>N_2$),
$$
\Big|\int_{\RR^d} \xi(x)(\rho_n-\rho)(dx)\Big| \leq \frac \eps 4.
$$
Thus, for $n\geq N_3$
$$
\rho_n\big(B(x,\frac 1n)\setminus\{x\}\big) \leq \frac \eps 2.
$$
Plugging this latter inequality into \eqref{ineq3} and from \eqref{ineq2},
we deduce that for $n\geq N_3$,
$$
\Big|\int_{x\neq y} \big(\nabla W_n(x-y)-\nabla W(x-y)\big) \rho_n(dy)\Big| \leq
\frac 1n + C \frac{\eps}{2}
$$
Finally, combining this latter inequality with \eqref{ineq1}, we deduce that
for $n\geq N_3$,
$$
\big|a_n(x)-a(x)| \leq \frac\eps 4 + \frac 1n + C\frac \eps 2,
\qquad \mbox{for a.e. }x\in \RR^d.
$$
\end{proof}

\bigskip
{\bf Acknowledgements.}
JAC acknowledges support from projects MTM2011-27739-C04-02,
2009-SGR-345 from Ag\`encia de Gesti\'o d'Ajuts Universitaris i de Recerca-Generalitat 
de Catalunya, the Royal Society through a Wolfson Research Merit Award, and the Engineering and Physical Sciences Research Council (UK) grant number EP/K008404/1.
NV acknowledges partial support from the french "ANR blanche" project Kibord~: ANR-13-BS01-0004.

\bigskip

%
%



\newpage

\begin{center}
  {\Large Erratum: The Filippov characteristic flow for the aggregation equation with mildly singular potentials}  \\[5mm]

  José Antonio Carrillo, François James, Frédéric Lagoutière, \\ David Poyato, Nicolas Vauchelet
\end{center}

\vspace{1cm}

\section{Introduction}

In this erratum we provide a corrected version and a corrected proof of an existence and uniqueness result in \cite{CJLV} concerning weak measure-valued solutions to the so-called aggregation equation in space dimension $d$. The original statement of the theorem containing the mistake is reminded in Theorem \ref{wrongtheo}, and its corrected version is stated in Theorem \ref{Exist}. The aggregation equation reads
\begin{equation}\label{EqInter}
\begin{aligned}
&\pa_t\rho = \dv\big((\nabla_x W*\rho) \rho\big), \quad t>0,\, x\in\RR^d,\\
&\rho(0,\cdot)=\rho^{ini},
\end{aligned}
\end{equation}
for some initial condition $\rho(0,\cdot)=\rho^{ini}$.
In this equation, $W$ is an interaction potential whose gradient $\nabla_x W(x-y)$ measures the relative effect exerted by a unit mass localized at a point $y$ onto the velocity of a unit mass located at a point $x$.
As in \cite{CJLV}, we assume that the interaction potential $W\,:\,\RR^d\to\RR$ is \textit{pointy}, i.e. it satisfies the following properties:
\begin{itemize}
\item[{\bf (A0)}] $W$ is Lipschitz-continuous, $W(x)=W(-x)$ and $W(0)=0$;
\item[{\bf (A1)}] $W$ is $\lambda$-convex for some 
{$\lambda  \leq 0$}, i.e.  $W(x)-\frac{\lambda}{2}|x|^2$ is convex;
\item[{\bf (A2)}] $W\in C^1(\RR^d\setminus\{0\})$.
\end{itemize}
Typical examples are fully attractive potentials $W(x)=1-e^{-|x|}$, or $W(x) = |x|$.
Notice that the Lipschitz-continuity of the potential allows to bound the velocity field: 
\begin{equation}
\label{borngradW}
\exists\, w_\infty > 0: \quad \|\nabla W \|_\infty \leq w_\infty.
\end{equation}

We denote by $C_0(\RR^d)$ the space of continuous functions from $\RR^d$ to $\RR$ that tend to $0$ at $\infty$, and 
$\calM_b(\RR^d)$ the space of Borel signed measures whose total variation is finite.
We call $\calP(\RR^d)$ the subset of $\calM_b(\RR^d)$ of probability measures, and $\calP_2(\RR^d)$ the subset of probability measures with finite
second order moment.
The space ${\mathcal P}_{2}(\RR^d)$ is equipped with the Wasserstein distance $d_W$ defined by (see e.g. \cite{Ambrosio, Filippo c touo})
$$
d_W(\mu,\nu) := \inf_{\gamma\in \Gamma(\mu,\nu)} \left\{\int_{\RR^d\times \RR^d} |y-x|^2\,\gamma(dx,dy)\right\}^{1/2},
$$
where $\Gamma(\mu,\nu)$ is the set of measures on $\RR^d\times\RR^d$ with marginals $\mu$ and $\nu$.

\subsection{Definition of the velocity field and its Filippov's flow}
The aggregation equation \eqref{EqInter} can be regarded as a continuity equation whose velocity field is determined by the convolution $-\nabla_xW*\rho$. In view {\bf (A0)}-{\bf (A2)}, we remark that $\nabla W$ could be discontinuous and its value is not well defined at $x=0$. Therefore, the above convolution is weakly defined and setting a precise pointwise definition of the velocity field is crucial. Specifically, given any such curve $\rho \in C([0,+\infty),\calP_2(\RR^d))$, we shall define its associated velocity field $\widehat{a}_{\rho}$ by
\beq\label{achapo}
\achapo_{\rho}(t,x)
:=
 -\int_{\RR^d} \nabWchapo(x-y) \rho(t,dy),\quad t\geq 0,\,x\in \RR^d,
\eeq
where we have used the notation
$$
\nabWchapo(x) := \left\{
\begin{array}{ll} \nabla W(x), \qquad & \mbox{ for } x\neq 0, 
\\
0, & \mbox{ for } x=0. \end{array}
\right.
$$

On the one hand, due to the Lipschitz-continuity of $W$, see {\bf (A0)}, which implies \eqref{borngradW} as mentioned above, we obtain the following uniform bound for the velocity field $\achapo_\rho$ defined in \eqref{achapo}
\begin{equation}\label{eq:abound}
|\achapo_\rho(t,x)|\leq w_\infty,\quad x\in\RR^d,\,t\geq0.
\end{equation}
On the other hand, due to the $\lambda$-convexity of $W$, see {\bf (A1)}, we deduce
\beq\label{lambdaconv}
\langle \nabla W(x)-\nabla W(y) , x-y\rangle \geq \lambda |x-y|^2,\quad x,y\in \RR^d\setminus\{0\}.
\eeq
This is not enough to ensure the Lipschitz-continuity of the velocity field $\achapo_\rho$ in \eqref{achapo}, which is in fact discontinuous at the atoms of the probability measure $\rho(t)$, but it is clear that \eqref{lambdaconv} implies the following one-sided Lipschitz estimate for $\achapo_\rho$
\begin{equation}
\label{eq:aOSL}
\bigl\langle \achapo_{\rho}(t,x)-\achapo_{\rho}(t,y), x-y\bigr\rangle \leq -\lambda |x-y|^2,\quad  t \geq 0,\,x,y\in\RR^d.
\end{equation}

By virtue of the uniform bound \eqref{eq:abound} and the one-sided Lipschitz estimate \eqref{eq:aOSL}, we may define a Filippov characteristic flow for the velocity field $\achapo_\rho$ which is globally-in-time defined, and also unique forward-in-time, see \cite{Filippov}. Specifically, for every time $s\geq 0$ and each point $x\in \RR^d$ there exists a unique absolutely continuous solution $Z_\rho(t;s,x)$ to the following differential inclusion
\begin{equation}\label{eq:characteristicsFilippov}
\begin{aligned}
&\frac{d}{dt} Z_\rho(t;s,x)\in [\achapo_\rho(t,\cdot)](Z_\rho(t;s,x)),\quad \mbox{a.e.}\ t\geq 0,\\
&Z_\rho(s;s,x)=x.
\end{aligned}
\end{equation}
Above, the notation $[\achapo_\rho(t,\cdot)]$ stands for the essential convex hull (also called {\it Filippov convexification}) of the bounded and measurable velocity field $\achapo_\rho(t,\cdot):\RR^d\longrightarrow\RR^d$. It is defined as follows
\begin{equation}\label{essconvexhull}
[\achapo_\rho(t,\cdot)](x):=\bigcap_{r>0}\bigcap_{N\in \mathcal{N}_0}\co(\achapo_\rho(t,B(x,r)\setminus N)),\quad t\geq 0,\,x\in \RR^d,
\end{equation}
where $\mathcal{N}_0$ is the set of zero Lebesgue measure sets, and $\co(A)$ denotes the closed convex hull of any set $A\subset \RR^d$. In particular, the Filippov characteristic verifies that $Z_\rho(\cdot;s,x)\in C([s,+\infty),\RR^d)$, it is differentiable at almost every $t\geq s$, and it satisfies the differential inclusion almost everywhere. From now on, we will make use of the shorthand $Z_\rho(t,x)=Z_\rho(t;0,x)$ to simplify our notation.


\subsection{The issue: Non-uniqueness of solutions defined by Filippov's flow}

Using the above notion of Filippov characteristic flow, it has been established in \cite{PoupaudRascle} that, for any given bounded and one-sided Lipschitz velocity field, the solutions to corresponding (linear) conservative transport equation can be defined as the pushforward of the initial condition along the Filippov characteristic flow of the velocity field. Based on this approach, existence and uniqueness of solutions to the (nonlinear) aggregation equation \eqref{EqInter} defined by a Filippov flow had been established in \cite{CJLV}. More precisely, the following result was stated in \cite[Theorem 2.5]{CJLV}.

\begin{theorem}[Original version]\label{wrongtheo}
Let $W$ satisfy assumptions {\bf (A0)}-{\bf (A2)} and let $\rho^{ini}$ be given in $\mathcal{P}_2(\RR^d)$. Given any $T>0$, there exists a unique Filippov characteristic flow $Z$ such that the pushforward measure $\rho:=Z_{\#}\rho^{ini}$ is a distributional solution of the aggregation equation
$$
\begin{aligned}
&\partial_t\rho +\dv(\achapo_\rho\rho)=0, \quad t>0,\, x\in\RR^d,\\
&\rho(0,\cdot)=\rho^{ini},
\end{aligned}
$$
where $\achapo_\rho$ is defined by \eqref{achapo}. 

Besides, if $\rho^{ini}$ and $\mu^{ini}$ are two given nonnegative measures in $\mathcal{P}_2(\RR^d)$, then the corresponding pushforward measures $\rho$ and $\mu$ satisfy for all $t\in [0,T]$
$$d_W(\rho(t),\mu(t))\leq e^{-2\lambda t}d_W(\rho^{ini},\mu^{ini}).$$
\end{theorem}

We have identified a mistake in the uniqueness part of the proof of Theorem \ref{wrongtheo} given in \cite[Theorem 2.5]{CJLV}. Specifically, it is still true (see corrected version in Theorem \ref{Exist}) that there exists a unique distributional solution of the above aggregation equation, and additionally all distributional solutions of the above aggregation equation are solutions defined by Filippov's flow, that is, $\rho(t)=Z_\rho(t,\cdot)_\#\rho^{ini}$ where $Z_\rho$ is the unique Filippov's characteristic flow associated to $\achapo_\rho$, {\it cf.} \eqref{eq:characteristicsFilippov}. However, in general it is false that the later type of solutions (i.e., solutions defined by Filippov's flow) amount to the former type of solutions (i.e., distributional solutions). Specifically, there is non-uniqueness of solutions defined by Filippov's flow, as we discuss below.

\begin{remark}[Non-uniqueness of solutions defined by Filippov's flow]
Consider the problem of finding solutions defined by Filippov's flow issued at $\rho^{ini}\in \mathcal{P}_2(\RR^d)$, i.e., curves of probability measures $\rho\in C([0,+\infty),\mathcal{P}_2(\RR^d))$ such that
\[
\left\{\begin{array}{l}
\rho(t):=Z_{\rho}(t,\cdot){}_\# \rho^{ini}, \\
\frac{d}{dt} Z_\rho(t;s,x)\in [\achapo_\rho(t,\cdot)](Z_\rho(t;s,x)),\quad \mbox{a.e.}\ t\geq 0,\\
Z_\rho(s;s,x)=x.
\end{array}\right.
\]
where, $[\widehat{a}_\rho (t,\cdot)]$ denotes the essential convex hull of $\widehat{a}_\rho (t,\cdot)$ introduced in \eqref{essconvexhull}. We show below that given an initial datum $\rho^{ini}$ such a problem could admit more than one solution. 

Indeed, for dimension $d = 1$ and potential $W(x) = |x|$, consider the initial datum $\rho^{ini} = \delta_0$. On the one hand, it is straightforward to check that $\rho_1(t) = \delta_0$ is a solution to this problem (and even, it will be the unique distributional solution to the aggregation equation as per Theorem \ref{Exist}). Specifically, note that its associated velocity field $\achapo_{\rho_1}$ and its essential convex hull have the form
$$\achapo_{\rho_1}(t,x)=\left\{\begin{array}{l}
1, \quad x < 0, \\
0, \quad x = 0, \\
-1, \quad x > 0, 
\end{array}\right.\quad [\widehat{a}_{\rho_1}(t,x)] = \left\{\begin{array}{l}
\{1\}, \quad x < 0, \\
\lbrack -1, 1 \rbrack, \quad x = 0, \\
\{-1\}, \quad x > 0.
\end{array}\right.$$
Hence, the unique Filippov characteristic of $\achapo_{\rho_1}$ starting at $x=0$ has the form $Z_{\rho_1}(t,0)=0$ because
$$\frac{d}{dt}Z_{\rho_1}(t,0)=0\in [-1,1]=[\achapo_{\rho_1}(t,\cdot)](Z_{\rho_1}(t,0)),\quad t\geq 0,$$
which implies that $Z_{\rho_1}(t,\cdot)_{\#}\rho^{ini}=\rho_1(t)$ for all $t\geq 0$. On the other hand, we also have that $\rho_2(t) = \delta_t$ defines a second solution to the above problem issued at the same initial datum $\rho^{ini}$. This time, the velocity field induced by $\rho_2$ and its essential convex hull read
\[
\widehat{a}_{\rho_2}(t,x) = \left\{\begin{array}{l}
1, \quad x < t, \\
0, \quad x = t, \\
-1, \quad x > t, 
\end{array}\right.\quad [\widehat{a}_{\rho_2}(t,x)] = \left\{\begin{array}{l}
\{1\}, \quad x < t, \\
\lbrack -1, 1 \rbrack, \quad x = t, \\
\{-1\}, \quad x > t.
\end{array}\right.
\]
Thereby, the unique Filippov characteristic of $\achapo_{\rho_2}$ starting at $x=0$ has the form $Z_{\rho_2}(t,0) = t$ because 
$$\frac{d}{dt}Z_{\rho_2}(t,0)=1\in [-1,1]=[\achapo_{\rho_2}(t,\cdot)](Z_{\rho_2}(t,0)),\quad t\geq 0,$$
which implies that $Z_{\rho_2}(t,\cdot)_{\#}\rho^{ini}=\rho_2(t)$ for all $t\geq 0$.

Nevertheless, there is a big difference between both choices of Filippov's flow: whilst in fact $Z_{\rho_1}(t,0)=0$ solves the characteristic system in the classical sense
$$\frac{d}{dt}Z_{\rho_1}(t,0)=\achapo_{\rho_1}(t,Z_{\rho_1}(t,0)),\quad t\geq 0,$$
the second curve $Z_{\rho_2}(t,0) = t$ only verifies the characteristic system in Filippov's sense
$$\frac{d}{dt}Z_{\rho_2}(t,0)\in [\achapo_{\rho_2}(t,\cdot)](Z_{\rho_2}(t,0)), \quad t\geq 0.$$
This difference is crucial and yields completely different behaved solutions: whilst $\rho_1$ is a distributional solution to the aggregation equation, $\rho_2$ is not a distributional solution. Here, and contrary to the linear setting studied in \cite{PoupaudRascle}, the nonlinearity of the problem under consideration is responsible for this gap. Nevertheless, as we will see in Theorem \ref{Exist}, given any distributional solution $\rho$ to the aggregation equation, and once its velocity field $\achapo_\rho$ is computed, it is a matter of fact that $\rho(t)=Z_{\rho}(t,\cdot)_{\#}\rho^{ini}$, where $Z_\rho$ is the unique Filippov flow of the characteristic equation. This is why we still chose to call this solution a Filippov-type solution. However, verifying the characteristic system in the classical sense is fundamental in order to have uniqueness and stability results.


\end{remark}

\subsection{The solution: Uniqueness of distributional solutions}

The {\em corrected} version of \cite[Theorem 2.5]{CJLV} reads as follows:

\begin{theorem}[Corrected version]\label{Exist}
Let $W$ satisfy assumptions {\bf (A0)--(A2)} and let $\rho^{ini}$ be given in $\calP_2(\RR^d)$. Then, there exists a unique distributional solution $\rho \in C([0,+\infty),\calP_2(\RR^d))$ of
\begin{equation}
\label{eq:agreg:TH}
\begin{aligned}
&\partial_t\rho +\dv(\achapo_\rho\rho)=0, \quad t>0,\, x\in\RR^d,\\
&\rho(0,\cdot)=\rho^{ini},
\end{aligned}
\end{equation}
where $\achapo_{\rho}$ is defined by \eqref{achapo}. This unique distributional solution may be represented in the following form as the family of pushforward measures 
\begin{equation}\label{eq:push-forward}
\rho(t):=Z_{\rho}(t,\cdot)_\# \rho^{ini},\quad t\geq 0,
\end{equation}
where $Z_{\rho}$ is the unique Filippov characteristic flow associated to the velocity field $\achapo_{\rho}$, {\it cf.} \eqref{eq:characteristicsFilippov}. Additionally, for $\rho^{ini}$-a.e. $x\in \RR^d$ the Filippov characteristic $Z_\rho(\cdot,x)$ is actually a classical solution verifying the characteristic system in integral form, i.e.,
\begin{equation}
\label{eq:characteristics}
Z_\rho(t,x)=x+\int_0^t \achapo_{\rho}\bigl(s,Z_\rho(s,x)\bigr)\,ds,\quad \rho^{ini}\mbox{-a.e.}\ x\in \RR^d,\,\mbox{a.e.}\ t\geq 0.
\end{equation}

Besides, if $\rho$ and  $\rho'$ are the respective distributional solutions of \eqref{eq:agreg:TH} with $\rho^{ini}$ and $\rho^{ini,\prime}$ as initial conditions in ${\mathcal P}_{2}(\RR^d)$,  then 
\begin{equation}
  \label{eq:contract}
d_{W}(\rho(t),\rho'(t)) \leq e^{-\lambda t}
d_{W}(\rho^{ini},\rho^{ini,\prime}), \qquad t \geq 0.    
\end{equation}
\end{theorem}

\begin{remark}
The result relies strongly on the precise definition \eqref{achapo} of the velocity field $\achapo_\rho$. As mentioned above, we remark that at any point $x\in \RR^d$ where $\rho(t)$ has an atom, the field $\widehat{a}_\rho(t,\cdot)$ is discontinuous. Hence, defining the value of $\widehat{a}_\rho(t,x)$ at all $(t,x)$ (not only almost everywhere) is a way to define properly the ambiguous product $\widehat{a}_\rho \rho$ in the definition of distributional solutions to \eqref{eq:agreg:TH}.

The uniqueness part also relies strongly on the fact that $\widehat{a}_\rho(t,\cdot)$ satisfies the one-sided Lipschitz estimate \eqref{eq:aOSL}, which makes the push-forward representation formula \eqref{eq:push-forward} valid for all distributional solutions and, more interestingly, for $\rho^{ini}$-a.e. initial datum $x\in \RR^d$ the trajectory $Z_\rho(\cdot,x)$ can be formulated by a classical characteristic system \eqref{eq:characteristics} instead of a differential inclusion \eqref{eq:characteristicsFilippov}. At all other $x\in \RR^d$ away of the support of $\rho^{ini}$ the trajectory may be understood in Filippov's sense yet. This will be crucially used in the new proof of the stability estimate \eqref{eq:contract} of distributional solutions.

Finally, we remark that a classical formulation \eqref{eq:characteristics} is not available for the solutions in Filippov's sense of a general abstract ODE system unless we modify the definition of the velocity field on a suitable negligible set (see \cite[Theorem 3.5]{PoupaudRascle}), which we cannot do in our nonlinear setting since $\achapo_\rho$ must be defined at all points by \eqref{achapo}. Fortunately, \eqref{eq:characteristics} is at least valid for the Filippov flow of the velocity field $\achapo_\rho$ associated to a distributional solution $\rho\in C([0,+\infty),\mathcal{P}_2(\mathbb{R}^d))$ to \eqref{eq:agreg:TH}.
\end{remark}

In the next section, we provide a proof of this theorem. On the one hand, the original proof of existence presented in \cite[Theorem 2.4]{CJLV} has been completed. On the other hand, the proof of uniqueness has been corrected, using directly the integral formula \eqref{eq:characteristics} for the Filippov flow any distributional solutions instead of the regularization process in \cite[Theorem 2.4]{CJLV}, which contains a mistake. The main changes in the proof of this result are detailed in the following section.

\section{Proof of Theorem \ref{Exist}}
\label{sec:proof}

The proof of existence is based on the idea of atomization, consisting in approximating the distributional solution by a finite sum of Dirac masses (or particles), and then passing to the limit. The proof of uniqueness relies strongly on the stability estimate \eqref{eq:contract}, which in turn yields uniqueness of distributional solutions. This latter estimate exploits in a crucial way that general distributional solutions of the aggregation equation \eqref{eq:agreg:TH} can be represented by the push-forward formula \eqref{eq:push-forward}, and also that the new integral formulation of the characteristic system \eqref{eq:characteristics} holds. This section is organized as follows. We shall start by proving existence in Section \ref{subsec:proof-existence}, then the representation and integral formulas in Section \ref{subsec:proof-integral-formula}, and finally the stability estimate and uniqueness in Section \ref{subsec:proof-stability}.

\subsection{Proof of existence}\label{subsec:proof-existence}

\subsubsection{Approximation with Dirac masses}

Let us assume that the initial density is given by
$\rho^{ini,N}(x) = \sum_{i=1}^N m_i \delta(x-x_i^0)$ for
a finite integer $N$, with $x_i^0\neq x_j^0$ for $i\neq j$ and $\sum_{i=1}^N m_i = 1$, thus belonging to $\calP_2(\RR^d)$. 
Following \cite{CJLV}, the goal is to look for $\rho^N(t,x) = \sum_{i=1}^N m_i \delta(x-x_i(t))$ solving the aggregation equation \eqref{eq:agreg:TH} in distributional sense. This suggests that positions $x_1,\ldots,x_N$ should solve the ODE system
\begin{equation}\label{ODEsyst}
\begin{aligned}
&x'_i(t) = -\sum_{j=1}^N m_j \widehat{\nabla W}(x_i(t)-x_j(t)),\\
&x_i(0) = x_i^0,  \quad i=1,\ldots,N.
\end{aligned}
\end{equation}

Let us define $t\to X(t) = (x_1(t), \ldots, x_N(t))^\top \in \RR^{Nd}$. The above dynamical system may be rewritten
$X'(t) = F(X(t))$, with the vector field $F:\RR^{Nd} \to \RR^{Nd}$ defined by
$$F((x_1,\ldots,x_N)^\top) = - \bigg(\sum_{j=1}^N m_j \widehat{\nabla W}(x_1(t)-x_j(t)),\ldots,\sum_{j=1}^N m_j \widehat{\nabla W}(x_N(t)-x_j(t))\bigg)^\top.$$
We verify that $F$ satisfies a one-sided Lipschitz condition for the weighted inner product $\langle \cdot,\cdot\rangle_m$ on $\RR^{Nd}$ defined by $\langle u,v\rangle_m := \sum_{k=1}^N m_k \langle u_k,v_k\rangle$, with $\langle\cdot,\cdot\rangle$ the usual inner product in $\RR^d$. We compute
\begin{align*}
  \langle F(X)-F(Y),X-Y\rangle_m \ 
  & = -\sum_{k=1}^N m_k \sum_{j=1}^N m_j \langle \widehat{\nabla W}(x_k-x_j)-\widehat{\nabla W} (y_k-y_j),x_k-y_k\rangle  \\
  & = -\frac 12 \sum_{k,j=1}^N m_k m_j  \langle \widehat{\nabla W}(x_k-x_j)-\widehat{\nabla W} (y_k-y_j),x_k-x_j-y_k+y_j\rangle,  
\end{align*}
thanks to the symmetry of $W$, thus 
\[
\langle F(X)-F(Y),X-Y\rangle_m  
\leq -\frac{\lambda}{2} \sum_{k,j=1}^N m_k m_j |x_k-y_k-x_j+y_j|^2,
\]
where we use the $\lambda$-convexity assumption {\bf (A1)} of $W$ ({\it cf.} \eqref{lambdaconv}) for the last inequality.
Hence,
\begin{align*}
  \langle F(X)-F(Y),X-Y\rangle_m \leq -2\lambda \sum_{k=1}^N m_k |x_k-y_k|^2 = -2\lambda |X-Y|^2_m,
\end{align*}
(recall that $\lambda \leq 0$ and that $\sum_{j=1}^N m_j = 1$). Additionally, since $\nabla W$ is bounded ({\it cf.} \eqref{borngradW}) by the Lipschitz-continuity assumption {\bf (A0)} of $W$, we also have that $F$ satisfies the uniform bound
$$
|F(X)|_m\leq w_\infty,\quad X\in \RR^{dN}.
$$
Then, again from the Filippov theory \cite{Filippov}, there exists a unique global-in-time Filippov solution $X$ to the system \eqref{ODEsyst}, which is understood as an absolutely continuous solution to the differential inclusion into $X'(t)\in [F](X(t))$ for a.e. $t\geq 0$, {\it cf.} \eqref{essconvexhull}.

We remark though that such a unique Filippov solution must actually solve the differential equation $X'(t)=F(X(t))$ for a.e. $t\geq 0$ in the classical sense. Indeed, since we depart from a non-collisional initial datum (i.e., $x_i^0\neq x_j^0$ for all $i\neq j$), then the Filippov solution $X$ is understood in the classical sense until it eventually breaks down at some finite time $t^*$, at which some particles collide. We claim that Filippov's dynamics selects a continuation of the classical solution by sticking of the groups formed after the collision time, see \cite[\S 3.2]{PPS} for more details. Specifically, for all $i\in \{1,\ldots,N\}$ take the subset $J_i\subset \{1,\ldots,N\}$ of particles colliding with particle $i$, that is, $x_i(t^*) = x_j(t^*)$ for $j\in J_i$, but $x_i(t_*)\neq x_j(t_*)$ for $j\notin J_i$. Assume that exactly $M$ groups of particles (with $M<N$) get formed at $t^*$ with indices $i_1,\ldots,i_M\in \{1,\ldots,N\}$. Define the reduced system consisting of $M$ particles $y_1,\ldots, y_M$ evolving according to
$$
\begin{aligned}
&y'_k(t) = -\sum_{l=1}^M n_l \widehat{\nabla W}(y_k(t)-y_l(t)),\quad t\geq t^*,\\
&y_i(t^*) = x_{i_k}(t^*),  \quad k=1,\ldots,M,
\end{aligned}
$$
with $n_k:=\sum_{j\in J_{i_k}}m_j$ the total mass on each group. Since the initial data of the reduced system is non-collisional by definition, a classical solution exists and extends until a new eventual collision time $t^{**}>t^*$. By uniqueness of the Filippov solution to \eqref{ODEsyst} we infer that $x_j(t)=y_k(t)$ for $t\in [t^*,t^{**}]$, all $j\in J_{i_k}$ and all $k=1,\ldots,M$. Since $\widehat{\nabla W}(0)=0$, this amounts to saying that the Filippov solution $X$ solves the differential equation \eqref{ODEsyst} in the classical sense also in $[t^*,t^{**}]$. Repeating this procedure finitely many times, we cover the full lifespan of the Filippov solution.

Having $X$ solving \eqref{ODEsyst}, we define its associated curve of probability measures
$$\rho^N(t,x) = \sum_{i = 1}^N m_i \delta(x - x_i(t)),$$
whose velocity field $\widehat{a}_{\rho^N}$ in \eqref{achapo} then has the form: 
\[
\widehat{a}_{\rho^N}(t,x) = -\sum_{j=1}^N m_j \widehat{\nabla W}(x-x_j(t)).
\]
This velocity field satisfies again the uniform bound \eqref{eq:abound} and the one-sided Lipschitz estimate \eqref{eq:aOSL}, which allows defining a global-in-time unique Filippov flow $\Zchapo^N$, see \cite{Filippov}. Next, we define $\rho_{PR}^N = \Zchapo^N_{\ \#} \rho^{ini,N}$. Thanks to \cite{PoupaudRascle}, $\rho_{PR}^N$ must solve in distributional sense the transport equation
$$
\pa_t \rho_{PR}^N + \dv \big(\achapo_{\rho^N} \rho_{PR}^N\big) = 0.
$$
Moreover, from the definition of the pushforward measure, we can write
$$
\achapo_{\rho_{PR}^N} = -\int_{\RR^d} \nabWchapo(x-y) \rho_{PR}^N(dy)
= - \int_{\RR^d} \nabWchapo(x-\Zchapo^N(t,y)) \rho^{ini,N}(dy).
$$
By definition of $\rho^{ini,N}$, we deduce
$$
\begin{array}{ll}
\achapo_{\rho_{PR}^N}(t,x) &\ds = - \sum_{i=1}^N m_i \nabWchapo(x-\Zchapo^N(t,x_i^0))
= \achapo_{\rho^N}(t,x).
\end{array}
$$
Thus we conclude that $\rho_{PR}^N=\rho^N$, and therefore $\rho^N$ solves the aggregation equation \eqref{eq:agreg:TH} in distributional sense with initial data $\rho^{ini,N}$.

Since we have the bound \eqref{borngradW} on $\nabla W$, we again obtain the bound \eqref{eq:abound} on $\achapo_{\rho^N}$, that is
\beq\label{boundachapo}
|\achapo_{\rho^N}(t,x)|\leq w_\infty,\quad t\geq 0,\,x\in \RR^d.
\eeq
Arguing as in \cite{CJLV}, the above implies that the second order moment is bounded uniformly on each time interval $[0,T]$, then $\rho^N\in C([0,T],\calP_2(\RR^d))$ for all $T>0$, and therefore $\rho^N\in C([0,+\infty),\mathcal{P}_2(\mathbb{R}^d))$.

\subsubsection{Passing to the limit $N\to +\infty$}

For $\rho^{ini}\in \calP_2(\RR^d)$, we consider an approximation $\rho^{ini,N}\in \calP_2(\RR^d)$ given by a finite sum of Dirac masses such that $d_W(\rho^{ini,N},\rho^{ini})\to 0$ as $N\to\infty$. In particular, $\rho^{ini,N} \rightharpoonup \rho^{ini}$ weakly in the sense of measures in $\mathcal{M}_b(\RR^d)$. In the previous section, we have proved that we can construct a Filippov flow $\Zchapo^N$ and a measure $\rho^N = \Zchapo^N\,_{\#} \rho^{ini,N}\in C([0,+\infty),\calP_2(\RR^d))$ solving the aggregation equation
\begin{equation}\label{eq:rhoN}
\pa_t \rho^N + \dv (\achapo_{\rho^N} \rho^N) = 0,
\end{equation}
in distributional sense, where $\achapo_{\rho^N}$ is defined by \eqref{achapo}.
From \eqref{boundachapo}, we have that
$\achapo_{\rho^N}$ is bounded in $L^\infty([0,T]\times\RR^d)$.
Thus $\achapo_{\rho^N}$ converges up to a subsequence towards $b$
in $L^\infty_{t,x}-weak*$.  Passing to the limit in the distributional sense in the uniform bound \eqref{eq:abound} and the one-sided Lipschitz estimate \eqref{eq:aOSL} (both holding with $N$-independent parameters $w_\infty>0$ and $\lambda<0$), we deduce that $b$ belongs to $L^\infty([0,T]\times \RR^d)$ and satisfies the one-sided Lipschitz condition. Then, we can define $Z_b$ the global-in-time unique Filippov flow corresponding to $b$. From the $L^\infty_{t,x}-weak*$ convergence above, it is obvious that $\achapo_{\rho^N}$ converges weakly to $b$ in $L^1([0,T];L^1_{loc}(\RR^d))$. Therefore, we can apply the stability result in \cite[Theorem 1.2]{Bianchini}
and deduce that $\Zchapo^N \to Z_b$ locally in $C([0,T]\times \RR^d)$ as $N\to +\infty$.

Moreover, it has been proved in \cite[\S 3.2.2]{CJLV} that for every $\phi \in C_0(\RR^d)$, we have
$$
\int_{\RR^d} \phi(x) \rho^N(t,dx)
= \int_{\RR^d} \phi(\Zchapo^N(t,x)) \rho^{ini,N}(dx) \underset{N \to +\infty}{\longrightarrow}
\int_{\RR^d} \phi(X_b(t,x)) \rho^{ini}(dx),
$$
uniformly in $t\in [0,T]$. Indeed, as above, the uniform estimate \eqref{boundachapo} implies that second order moments of $\rho^N(t)$ are uniformly bounded with respect to $t\in [0,T]$ and $N\in \mathbb{N}$. Then, a standard cut-off argument ensures that the above must also holds for every $\phi \in C_b(\RR^d)$. Therefore, we deduce that $\rho^N \to \rho:=X_b\,_\# \rho^{ini}$ in $C([0,T],\mathcal{P}(\RR^d)\mbox{-narrow})$ as $N\to +\infty$. From this latter convergence, we deduce by applying \cite[Lemma A.1]{CJLV}
that $\achapo_{\rho^N} \to \achapo_{\rho}$ a.e., which implies that $b=\achapo_{\rho}$ a.e.

Finally, from \cite[Lemma 3.1]{CJLV}, we have that $\achapo_{\rho^N} \rho^N \rightharpoonup \achapo_\rho \rho$.
As a consequence, we can pass to the limit in the sense of distributions in the equation \eqref{eq:rhoN}, and we deduce that $\rho\in C([0,T],\mathcal{P}(\RR^d)\mbox{-narrow})$ is a distributional solution of \eqref{EqInter}. The bound of the second order moment of $\rho(t)$ is similar to the proof in \cite[\S 3.2.3]{CJLV}, and follows by the lower semicontinuity of the integrals with respect to the narrow convergence. We leave the proof of the fact that actually $\rho\in C([0,T],\mathcal{P}_2(\RR^d))$ to next Section.


\subsection{Proof of representation and integral formulas}\label{subsec:proof-integral-formula}
Consider a  distributional solution $\rho\in C([0,+\infty),\mathcal{P}(\RR^d)\mbox{-narrow})$ to the aggregation equation \eqref{eq:agreg:TH} with initial datum $\rho^{ini}\in \mathcal{P}_2(\RR^d)$, as in the previous Section \ref{subsec:proof-existence}. For any $T>0$, we have that $\rho\in C([0,T],\mathcal{P}(\RR^d)\mbox{-narrow})$, and the uniform bound \eqref{eq:abound} of $\achapo_\rho$ implies that
$$\int_0^T\int_{\RR^d}|\achapo_\rho(t,x)|^2\,\rho(t,dx)\,dt\leq Tw_\infty^2<\infty.$$
Hence, we can use the probabilistic representation in \cite[Theorem 8.2]{Ambrosio}, \cite[Theorem 4.4]{AmbrosioCrippa}. Specifically, there is a probability measure $\eta\in \mathcal{P}(\RR^d\times \Gamma_T)$, where $\Gamma_T:=C([0,T],\RR^d)$ is endowed with the uniform norm, such that $\eta$ is concentrated on pairs $(x,\gamma)$ with $x\in\RR^d$ and $\gamma\in AC^2(0,T;\RR^d)$ solving
\begin{equation}\label{eq:cauchyproblem}
\begin{aligned}
&\gamma'(t)=\achapo_\rho(t,\gamma(t)), \quad \mbox{a.e.}\ t\in [0,T],\\
&\gamma(0)=x,
\end{aligned}
\end{equation}
and such that $\rho(t):=e_{t\#}\eta$ for every $t\in [0,T]$, where the mapping $e_t:\RR^d\times \Gamma_T\longrightarrow \RR^d$ stands for the evaluation map defined by $e_t(x,\gamma)=\gamma(t)$, i.e.,
\begin{equation}\label{eq:probrepresentation}
\int_{\RR^d}\phi(x)\,\rho(t,dx)=\int_{\RR^d\times \Gamma_T}\phi(\gamma(t))\,\eta(dx,d\gamma),
\end{equation}
for all $\phi\in C_b(\RR^d)$. Evaluating \eqref{eq:probrepresentation} at $t=0$, and using that $\rho(0)=\rho^{ini}$ and $\gamma(0)=x$ for $\eta$-a.e. $(x,\gamma)\in \RR^d\times \Gamma_T$, we have $\pi_{x\#}\eta=\rho^{ini}$. Therefore, disintegrating $\eta$ with respect to $x$ yields a Borel family of probability measures $(\eta^x)_{x\in \RR^d}\subset \mathcal{P}(\Gamma_T)$ such that 
$$\eta(dx,d\gamma)=\rho^{ini}(dx)\otimes \eta^x(d\gamma),$$
see \cite[Theorem 5.3.1]{Ambrosio}. Using the above disintegration in \eqref{eq:probrepresentation} we infer
\begin{equation}\label{eq:probrepresentation2}
\int_{\RR^d}\phi(x)\,\rho(t,dx)=\int_{\RR^d}\int_{\Gamma_T}\phi(\gamma(t))\,\eta^x(d\gamma)\,\rho^{ini}(dx),
\end{equation}
for all $\phi\in C_b(\RR^d)$. Since $\eta$ is supported on the pairs $(x,\gamma)$ with $x\in \RR^d$ and $\gamma\in AC^2(0,T;\RR^d)$ solving \eqref{eq:cauchyproblem}, we deduce that for $\rho^{ini}$-a.e. $x\in \RR^d$ the conditional probability measure $\eta^x$ must be supported on the curves $\gamma\in AC^2(0,T;\RR^d)$ solving \eqref{eq:cauchyproblem}. We remark that $\eta^x$ is indeed a probability measure, and then it cannot have an empty support. Therefore, for $\rho^{ini}$-a.e. $x\in \RR^d$ the Cauchy problem \eqref{eq:cauchyproblem} must have at least one classical solution $\gamma_x\in AC^2(0,T;\RR^d)$. Since $\achapo_\rho$ verifies the one-sided Lipschitz estimate \eqref{eq:aOSL}, actually only one such classical solution exists and by uniqueness of the Filippov flow we indeed deduce $\gamma_x=Z_\rho(\cdot,x)$, which further implies that \eqref{eq:characteristics} holds true and that $\eta^x=\delta_{Z_\rho(\cdot,x)}$. Therefore, evaluating the integrals with respect to $\eta^x$ in \eqref{eq:probrepresentation2} yields
$$
\int_{\RR^d}\phi(x)\,\rho(t,dx)=\int_{\RR^d}\phi(Z_\rho(t,x))\rho^{ini}(dx),
$$
for all $\phi\in C_b(\RR^d)$, that is, the push-forward representation $\rho(t)=Z_\rho(t,\cdot)_{\#}\rho^{ini}$ in \eqref{eq:push-forward} is verified.

We finally prove that not only $\rho\in C([0,+\infty),\mathcal{P}(\RR^d)\mbox{-narrow})$ but also $\rho\in C([0,+\infty),\mathcal{P}_2(\RR^d))$. By the push-forward representation \eqref{eq:push-forward} above we obtain the relation
\begin{align*}
\int_{\RR^d}|x|^2\,\rho(t+h,dx)&-\int_{\RR^d}|x|^2\,\rho(t,dx)\\
&=\int_{\RR^d}(|Z_\rho(t+h,x)|^2-|Z_\rho(t,x)|^2)\,\rho^{ini}(dx)\\
&=2\int_t^{t+h}\int_{\RR^d}\langle Z_\rho(s,x),\achapo_\rho(s,x)\rangle\,\rho^{ini}(dx)\,ds,
\end{align*}
which by Jensen's inequality implies
$$\left\vert\int_{\RR^d}|x|^2\,\rho(t+h,dx)-\int_{\RR^d}|x|^2\,\rho(t,dx)\right\vert\leq 2w_\infty\int_t^{t+h}\left(\int_{\RR^d}|x|^2\,\rho(s,dx)\right)^{1/2}\,ds.$$
By the uniform bound \eqref{eq:abound} of $\achapo_\rho$, it is easy to prove that $|Z_\rho(t,x)|^2\lesssim |x|+t$ for $\rho^{ini}$-a.e. $x\in \RR^d$, and therefore the second order moments of $\rho(t)$ are bounded uniformly on each $[0,T]$. Hence, we can pass to the limit above as $h\to 0$ above and conclude that $\rho\in C([0,+\infty),\mathcal{P}_2(\RR^d))$.

\subsection{Proof of uniqueness}\label{subsec:proof-stability}

We start by proving the stability estimate \eqref{eq:contract} of distributional solutions to the aggregation equation \eqref{eq:agreg:TH}. Consider any couple $\rho,\rho'\in C([0,+\infty),\calP_2(\RR^d))$ of distributional solutions with respective initial conditions $\rho^{ini},\rho^{ini,\prime}\in \mathcal{P}_2(\RR^d)$. Their related velocity fields $\achapo_\rho$ and $\achapo_{\rho'}$ defined via \eqref{achapo} both satisfy the uniform bound \eqref{eq:abound} and the one-sided Lipschitz estimate \eqref{eq:aOSL}. Thus there exists a unique Filippov flow $Z_\rho$ and $Z_{\rho\prime}$ associated to each vector field, and by the push-forward representation \eqref{eq:push-forward} obtained in the above Section \ref{subsec:proof-integral-formula} we also have
\begin{equation}
\label{eq:aux:proof}
Z_{\rho}(t,\cdot){}_{\#} \rho^{ini}=
\rho(t,\cdot), \qquad 
Z_{\rho'}(t,\cdot){}_{\#} \rho^{ini,\prime}=
\rho'(t,\cdot), \qquad t \geq 0.
\end{equation}
Also, as proven in Section \ref{subsec:proof-integral-formula}, the integral formula \eqref{eq:characteristics} holds for both Filippov flows for $\rho^{ini}$-a.e. (respectively $\rho^{ini,\prime}$-a.e.) initial datum, that is,
\begin{equation}
\label{eq:aux2:proof}
\begin{aligned}
&Z_\rho(t,x)=x+\int_0^t \achapo_{\rho}\bigl(s,Z_\rho(s,x)\bigr)\,ds,\quad \rho^{ini}\mbox{-a.e.}\ x\in \RR^d,\,\mbox{a.e.}\ t\geq 0,\\
&Z_{\rho\prime}(t,y)=y+\int_0^t \achapo_{\rho\prime}\bigl(s,Z_{\rho\prime}(s,y)\bigr)\,ds,\quad \rho^{ini,\prime}\mbox{-a.e.}\ y\in \RR^d,\,\mbox{a.e.}\ t\geq 0.
\end{aligned}
\end{equation}

To simplify the notations, in the sequel we just write 
$Z(t,\cdot) = Z_{\rho}(t,\cdot)$
and
$Z'(t,\cdot) = Z_{\rho'}(t,\cdot)$.
Then, from \eqref{eq:aux2:proof}, we have
$$
Z(t,x)-Z'(t,y) = x-y + \int_0^t (\achapo_\rho(s,Z(s,x))-\achapo_{\rho'}(s,Z'(s,y)))\,ds,
$$
for $\rho^{ini}$-a.e. $x\in \RR^d$ and $\rho^{ini,\prime}$-a.e. $y\in \RR^d$. 
Thus
\begin{align*}
|Z(t,x)-Z'(t,y)|^2 = & |x-y|^2 + \Big|\int_0^t \big(\achapo_\rho(s,Z(s,x))-\achapo_{\rho'}(s,Z'(s,y))\big)\,ds\Big|^2   \\
& + 2\int_0^t \langle x-y, \achapo_\rho(s,Z(s,x))-\achapo_{\rho'}(s,Z'(s,y))\rangle\,ds  \\[2mm]
= & |x-y|^2 + \Big|\int_0^t \big(\achapo_\rho(s,Z(s,x))-\achapo_{\rho'}(s,Z'(s,y))\big)\,ds\Big|^2 \\
& + 2\int_0^t \langle Z(s,x)-Z'(s,y), \achapo_\rho(s,Z(s,x))-\achapo_{\rho'}(s,Z'(s,y))\rangle\,ds  \\
& + 2\int_0^t \langle x-Z(s,x)+Z'(s,y)-y, \achapo_\rho(s,Z(s,x))-\achapo_{\rho'}(s,Z'(s,y))\rangle\,ds,
\end{align*}
for $\rho^{ini}$-a.e. $x\in \RR^d$ and $\rho^{ini,\prime}$-a.e. $y\in \RR^d$. By definition \eqref{eq:aux2:proof}, we may rewrite the last term 
\begin{align*}
2\int_0^t &\langle  x-Z(s,x)+ Z'(s,y)-y \,, \,\achapo_\rho(s,Z(s,x))-\achapo_{\rho'}(s,Z'(s,y))\rangle\,ds \\
         & = 2\int_0^t\int_0^s \langle \achapo_{\rho'}(\tau,Z'(\tau,y))-\achapo_\rho(\tau,Z(\tau,x)), \achapo_\rho(s,Z(s,x))-\achapo_{\rho'}(s,Z'(s,y))\rangle\,d\tau\,ds  \\
         & = - 2 \Big|\int_0^t \big(\achapo_\rho(s,Z(s,x))-\achapo_{\rho'}(s,Z'(s,y))\big)\,ds\Big|^2  \\
         & \quad - 2 \int_0^t\int_s^t \langle \achapo_{\rho'}(\tau,Z'(\tau,y))-\achapo_\rho(\tau,Z(\tau,x)), \achapo_\rho(s,Z(s,x))-\achapo_{\rho'}(s,Z'(s,y))\rangle\,d\tau\,ds.
\end{align*}
Using Fubini's theorem, we also obtain
\begin{align*}
2\int_0^t &\langle  x-Z(s,x)+ Z'(s,y)-y \,, \,\achapo_\rho(s,Z(s,x))-\achapo_{\rho'}(s,Z'(s,y))\rangle\,ds \\
& = - 2 \Big|\int_0^t \big(\achapo_\rho(s,Z(s,x))-\achapo_{\rho'}(s,Z'(s,y))\big)\,ds\Big|^2  \\
& \quad - 2 \int_0^t\int_0^\tau \langle \achapo_{\rho'}(\tau,Z'(\tau,y))-\achapo_\rho(\tau,Z(\tau,x)), \achapo_\rho(s,Z(s,x))-\achapo_{\rho'}(s,Z'(s,y))\rangle\,ds\,d\tau.
\end{align*}
Hence,
\begin{align*}
  2 \int_0^t \langle  x-Z(s,x) + Z'(s,y)-y \,, \,\achapo_\rho(s,Z(s,x))-\achapo_{\rho'}(s,Z'(s,y))\rangle\,ds & \\
  = - \Big|\int_0^t \big(\achapo_\rho(s,Z(s,x))-\achapo_{\rho'}(s,Z'(s,y))\big)\,ds\Big|^2. &
\end{align*}
We the arrive at
\begin{equation}\label{eqXY1}
|Z(t,x) - Z'(t,y)|^2 =  |x - y|^2 +
2\int_0^t \langle Z(s,x)-Z'(s,y), \achapo_\rho(s,Z(s,x))-\achapo_{\rho'}(s,Z'(s,y))\rangle\,ds,
\end{equation}
for $\rho^{ini}$-a.e. $x\in \RR^d$ and $\rho^{ini,\prime}$-a.e. $y\in \RR^d$. 

Set any optimal plan $\pi \in \Gamma_{0}(\rho^{ini},\rho^{ini,\prime})$ between $\rho^{ini}$ and $\rho^{ini,\prime}$. 
Then, integrating \eqref{eqXY1} with respect to $\pi$, which can be done because the identity holds except on a $\pi$-negligible set,  we deduce
\begin{equation}\label{eqinter2}
\int_{\RR^d}\int_{\RR^d} |Z(t,x) - Z'(t,y)|^2\,\pi(dx,dy) = \int_{\RR^d}\int_{\RR^d} |x-y|^2\,\pi(dx,dy) + I,
\end{equation}
where we define
\[
I = 2\int_0^t \int_{\RR^d} \int_{\RR^d} \langle Z(s,x)-Z'(s,y), \achapo_\rho(s,Z(s,x))-\achapo_{\rho'}(s,Z'(s,y))\rangle\,\pi(dx,dy)\,ds.
\]
As $\rho(s) = Z(s,\cdot) \# \rho^{ini}$ and $\rho^\prime(s) = Z^\prime(s,\cdot) \# \rho^{ini,\prime}$ by \eqref{eq:aux:proof}, the definition \eqref{achapo} implies
\begin{align*}
\widehat{a}_\rho(s,Z(s,x)) & 
=- \int_{\RR^d} \widehat{\nabla W}(Z(s,x) - Z(s,x'))\,\rho^{ini}(dx')\\
&=-\iint_{\RR^d\times \RR^d}\widehat{\nabla W}(Z(s,x)-Z(s,x'))\,\pi(dx',dy'),
\end{align*}
and similarly
\begin{align*}
\widehat{a}_{\rho^\prime}(s,Z^\prime(s,y)) & 
=- \int_{\RR^d} \widehat{\nabla W}(Z^\prime(s,y) - Z^\prime(s,y'))\,\rho^{ini,\prime}(dy')\\
&=-\iint_{\RR^d\times \RR^d}\widehat{\nabla W}(Z^\prime(s,y)-Z^\prime(s,y'))\,\pi(dx',dy').
\end{align*}
Therefore we can write
\begin{align*}
I = & - 2\int_0^t \iint_{\RR^d\times\RR^d}\iint_{\RR^d\times\RR^d} \big\langle Z(s,x)-Z'(s,y), \nabWchapo(Z(s,x)-Z(s,x'))-\nabWchapo(Z'(s,y)-Z'(s,y'))\big\rangle  \\
&\hspace{11cm}\,\pi(dx,dy)\,\pi(dx',dy')\,ds \\
= & - \int_0^t \iint_{\RR^d\times\RR^d}\iint_{\RR^d\times\RR^d} \big\langle Z(s,x)-Z(s,x')-Z'(s,y)+Z'(s,y'), \\
&\hspace{3.5cm} \nabWchapo(Z(s,x)-Z(s,x'))-\nabWchapo(Z'(s,y)-Z'(s,y'))\big\rangle
\,\pi(dx,dy)\,\pi(dx',dy')\,ds,
\end{align*}
where we exchanged the role of $x,y$ with $x',y'$ and used the symmetry assumption on $W$ in assumption {\bf (A0)} to obtain the last equality.
By $\lambda$-convexity of $W$ {\bf (A1)}, we deduce
\begin{align*}
I &\leq - \lambda \int_0^t \iint_{\RR^d\times\RR^d}\iint_{\RR^d\times\RR^d} \big| Z(s,x)-Z(s,x')-Z'(s,y)+Z'(s,y')\big|^2\,\pi(dx,dy)\,\pi(dx',dy')\,ds  \\
& \leq - 2\lambda \int_0^t\left(\iint_{\RR^d\times\RR^d}|Z(s,x)-Z'(s,y)|^2\,\pi(dx,dy) - \left\vert\iint_{\RR^d\times\RR^d} (Z(s,x)-Z'(s,y))\,\pi(dx,dy)\right\vert^2\right) \,ds.
\end{align*}
Thus, recalling that $\lambda\leq 0$, we have obtained from \eqref{eqinter2}
\begin{align*}
\iint_{\RR^d\times\RR^d} |Z(t,x) - Z'(t,y)|^2\,\pi(dx,dy)
\leq & \iint_{\RR^d\times\RR^d} |x - y|^2\,\pi(dx,dy)  \\
& - 2\lambda \int_0^t \iint_{\RR^d\times\RR^d} |Z(s,x) - Z'(s,y)|^2\,\pi(dx,dy)\,ds.
\end{align*}
Thanks to Gr{\"o}nwall's lemma, we deduce
$$
\iint_{\RR^d\times\RR^d} |Z(t,x) - Z'(t,y)|^2\,\pi(dx,dy)
\leq d_{W}(\rho^{ini},\rho^{ini,\prime})^2  e^{-2\lambda t},
$$
where we use the fact that
$\iint_{\RR^d\times\RR^d}\vert x - y \vert^2 \pi(dx,dy) = d_{W}(\rho^{ini},\rho^{ini,\prime})^2$ by definition of optimal plan.
We conclude by noticing that $\pi_t:=(Z(t,\cdot)\otimes Z'(t,\cdot))_{\#}\pi\in \Gamma(\rho(t),\rho'(t))$ by \eqref{eq:aux:proof} and therefore
\begin{equation*}
 \int_{\RR^d}  \int_{\RR^d}\vert Z(t,x) - Z'(t,y) \vert^2
\,\pi(dx,dy)=\int_{\RR^d}  \int_{\RR^d}\vert x-y \vert^2
\,\pi_t(dx,dy) \geq 
d_{W}(\rho(t),\rho(t)^{\prime})^2,
\end{equation*}
by definition of the Wasserstein distace.

Uniqueness is deduced from the stability estimate in Wasserstein distance as proved above. Indeed, if we take $\rho^{ini} = \rho^{ini,\prime}$ in the stability estimate \eqref{eq:contract}, then we deduce that $\rho = \rho'$.

We finally remark that this uniqueness proof is reminiscent of the computations to characterize the element of minimal norm in the subdifferential of the interaction energy used in \cite{CDFLS} to construct unique solutions to the aggregation equations via the JKO approach.

\bigskip

%
%



\end{document}